\title{(Generalized) binomial edge ideals and their regularity}
\author{A. V. Jayanthan}
\address{Department of Mathematics, Indian Institute of Technology Madras, Chennai, Tamil Nadu, India - 600036.}
\email{jayanav@iitm.ac.in}
\author{Arvind Kumar}
\address{Department of Mathematical Sciences, New Mexico State University, 1305 Frenger St, Las Cruces, NM 88001, USA}
\email{arvkumar@nmsu.edu}
\urladdr{https://sites.google.com/view/arvkumar/home}
\newcommand{\reg}{\operatorname{reg}}
\newcommand{\iv}{\operatorname{iv}}
\newcommand{\ini}{\operatorname{in}}
\newcommand{\GG}{\mathcal{G}}
\newcommand{\NN}{\mathbb{N}}
\newcommand \ma{\operatorname{m}}
\newtheorem{theorem}{Theorem}[section]
\newtheorem{lemma}[theorem]{Lemma}
\newtheorem{proposition}[theorem]{Proposition}
\newtheorem{corollary}[theorem]{Corollary}
\newtheorem{remark}[theorem]{Remark}
\newtheorem{definition}[theorem]{Definition}
\newtheorem{example}[theorem]{Example}
\newtheorem{conjecture}[theorem]{Conjecture}
\newtheorem{question}[theorem]{Question}
\begin{document}

\begin{abstract}
In this article, we survey the recent results on the Castelnuovo-Mumford regularity of binomial edge ideals and generalized binomial edge ideals. We also generalize some of the known upper bounds for binomial edge ideals to the case of generalized binomial edge ideals.
\end{abstract}
\maketitle

\section{Introduction}
Ever since Stanley used commutative algebraic techniques to give a solution to the Anand-Dumir-Gupta conjecture \cite{Stan73} and a solution to upper bound conjecture for the case of triangulations of spheres \cite{Stan75}, associating ideals to combinatorial objects and understanding the interplay between their algebraic and combinatorial properties have been an active direction of research. Stanley-Reisner rings of simplicial complexes and (monomial) edge ideals of finite simple graphs are two such associations that have been extensively studied in the past few decades. 
Another one, which is the main object of this article, is called \textit{binomial edge ideals}. This was introduced by Herzog et al. \cite{HHHKR}, and independently by Ohtani \cite{Oh11}. Let $G$ be a simple graph with the vertex set $V(G) = [n] = \{1,\ldots,n\}$ and edge set $E(G)$. Then the binomial edge ideal of $G$ is defined to be \[J_G := ( x_iy_j - x_jy_i ~: \{i,j\} \in E(G) ) \subset S=K[x_1,\ldots, x_n, y_1,\ldots, y_n],\]
where $K$ is a field.
Ohtani and Herzog et al. described several basic properties of the binomial edge ideals such as a primary decomposition, a Gr\"obner basis, height etc. In \cite{HHHKR}, the authors related the binomial edge ideals to some specific case of a conditional independence statements arising in algebraic statistics.

Understanding the algebraic properties and invariants of the binomial edge ideals in terms of the structure and combinatorial invariants of the corresponding graph has been an active direction of research immediately after the introduction of this concept. In this article, we focus on the study of \textit{Castelnuovo-Mumford regularity}, or simply referred to as \textit{regularity} of binomial edge ideals. Regularity measures the computational complexity of a graded ideal or module. While it is defined using homological terms, its connection to other algebraic and geometric properties and invariants makes it an important object of study in the literature. 

%Let $R = K[t_1,\ldots,t_n] = \oplus_{d\geq 0}R_d$ be the standard graded polynomial ring in $n$ variables over the field $K$. Let $M = \oplus_{d\geq 0} M_d$ be a finitely generated graded $R$-module. Let $\beta_{ij}(M) = \dim_K \tor_i^R(M, K)_j$. Then the Castelnuovo-Mumford regularity is defined to be
%\[
%\reg(M) := \max\{j-i : \beta_{ij}(M) \neq 0\}.
%\]
In the context of binomial edge ideals, the efforts are to compute the regularity of a binomial edge ideal in terms of the combinatorial invariants associated to the graph. In general, it is extremely difficult to construct a minimal free resolution, and thus obtaining an explicit formula for the regularity in terms of the data associated with the graph becomes a challenging task. Therefore, researchers have been looking for efficient upper and/or lower bounds for the regularity. In this article, we survey the results on generic upper and lower bounds as well as some precise expressions for specific classes of graphs. We also prove some new results, generalizing some upper bounds known in the case of binomial edge ideals to the case of generalized binomial edge ideals.

Our paper is organized as follows: In the second section, we gather the graph theoretic and algebraic definitions, notation and some basic results required for the rest of the paper. In \Cref{sec:upper-bound}, we survey results on the upper bounds for the regularity of binomial edge ideals known in the literature. Most of these results are on binomial edge ideals. We generalize some of these upper bounds to the case of generalized binomial edge ideals. We focus on the upper bounds for specific classes of graphs in \Cref{sec:ub-specific}. We also provide a simple proof of a conjecture of Matsuda and Murai, which was proved by Kiani and Saeedi Madani, using a new invariant we introduced in the previous section. The last section is devoted to survey the results on lower bounds for the regularity of binomial edge ideals.

\section{Preliminaries}
In this section, we recall the necessary notation and terminology required for the rest of the paper.

\subsection{Basics on Graph Theory}
A graph is $G$ represented by $V(G)$, a set of vertices, and $E(G)$, a set of edges. For a graph $G$, we will denote the number of vertices by $n(G)$, number of edges in $G$ by $e(G)$ and the number of components of $G$ by $c(G)$. Throughout this paper, all graphs we consider are finite simple graphs, i.e., graphs having no multiple edges between a pair of vertices and no loops at any vertex. We denote by $\mathcal{G}$, the set of all finite simple graphs. Given a vertex $v$ of $G$, let $N_G(v) := \{u \in V(G) : \{u,v\} \in E(G)\}$. We say $u$ and $v$ are adjacent if $\{u,v\} \in E(G)$. For $v \in V(G)$, the degree of $v$ in $G$, denoted by $\deg_G(v)$, is the cardinality of $N_G(v)$.

A graph is said to be a \textit{complete graph} if each pair of its vertices is connected by an edge. A complete graph on $n$ vertices is denoted by $K_n$. A subset $S$ of $V(G)$ is said to be an independent set if there are no edges in $G$ between the vertices of $S$. A graph is said to \textit{bipartite} if one can write $V(G) = X \sqcup Y$ such that both $X$ and $Y$ are independent sets. A bipartite graph is a \textit{complete bipartite} graph if each vertex of $X$ is adjacent to all vertices of $Y$. We denote by $K_{m,n}$ a complete bipartite graph with $|X|=m$ and $|Y|=n$.

% \begin{figure}[H]
% \begin{tikzpicture}[scale=1.5]
% \draw (1,2)-- (1,1);
% \draw (1,2)-- (4,1);
% \draw (2,1)-- (2,2);
% \draw (3,2)-- (3,1);
% \draw (2,2)-- (1,1);
% \draw (2,2)-- (3,1);
% \draw (3,2)-- (4,1);
% \begin{scriptsize}
% \fill [color=black] (1,2) circle (1.5pt);
% \draw[color=black] (1.11,2.17) node {$x_1$};
% \fill [color=black] (2,2) circle (1.5pt);
% \draw[color=black] (2.11,2.17) node {$x_2$};
% \fill [color=black] (3,2) circle (1.5pt);
% \draw[color=black] (3.11,2.17) node {$x_3$};
% \fill [color=black] (1,1) circle (1.5pt);
% \draw[color=black] (1.11,0.80) node {$y_1$};
% \fill [color=black] (2,1) circle (1.5pt);
% \draw[color=black] (2.11,0.80) node {$y_2$};
% \fill [color=black] (3,1) circle (1.5pt);
% \draw[color=black] (3.09,0.80) node {$y_3$};
% \fill [color=black] (4,1) circle (1.5pt);
% \draw[color=black] (4.11,0.80) node {$y_4$};
% \end{scriptsize}
% \end{tikzpicture}
% \caption{A bipartite graph}
% \end{figure}

For a subset $X$ of $V(G)$, an \textit{induced subgraph} on $X$, denoted by $G[X]$, is the graph with vertex set $X$ and those edges in $G$ connecting the vertices of $X$. A set of vertices \(\mathcal{V} \subseteq V(G)\) is called a \textit{clique} of a graph \(G\) if the induced subgraph on \(\mathcal{V}\) forms a complete graph. A {\it maximal clique} of \(G\) is a clique that is not contained in any larger clique of \(G\). We denote the number of maximal cliques in the graph \(G\) as \(\mathcal{C}(G)\). The maximum size of a maximal clique in $G$ is called the \textit{clique number} of $G$, denoted by $\omega(G)$. A vertex is said to be an \textit{internal vertex} if it is contained in at least two maximal cliques. Number of internal vertices of a graph $G$ is denoted by $\iv(G)$.

A graph $G$ is a \textit{cycle graph} if $\deg_G u = 2$ for all $u \in V(G)$. If $G$ is a cycle graph on $n$ vertices, then one can label the vertices from $1$ to $n$ so that edge set is given by $E(G) = \{\{i,i+1\} : 1\leq i < n\} \cup \{\{1,n\}\}$. A graph is said to be \textit{chordal} if it has no induced cycle of length $4$ or more. A vertex $v$ of $G$ is said to be a \textit{cut vertex} if the induced subgraph on $V(G) \setminus \{v\}$ has more connected components than that of $G$. A \textit{block} of a graph $G$ is a maximal induced subgraph having no cut vertex. A \textit{block graph} is a graph with all its blocks being complete graphs. It is easy to observe that block graphs are chordal graphs. A tree is a graph having no cycles as subgraphs. %It is easy to observe that a tree is bipartite.

%\vskip 2mm \noindent
%\textcolor{red}{
%$\bullet$ Add a graph and write out all the basic data such as internal vertices, free vertices, number of cliques, size of largest clique, longest induced path, 
%}

\begin{figure}[H]
\begin{tikzpicture}
\draw (2,3)-- (1,2);
\draw (1,2)-- (2,1);
\draw (2,1)-- (3,2);
\draw (3,2)-- (2,3);
\draw (2,3)-- (2,1);
\draw (1,2)-- (3,2);
\draw (2,3)-- (3,3);
\draw (3,3)-- (3,2);
\draw (3,2)-- (4,2);
\draw (4,2)-- (3.5,3);
\draw (4,2)-- (4.5,3);
\draw (4,2)-- (5,2.5);
\draw (4,2)-- (5,1.5);
\draw (5,1.5)-- (5,2.5);
\draw (5,2.5)-- (6,2);
\draw (6,2)-- (5,1.5);
\begin{scriptsize}
\fill [color=black] (2,3) circle (1.5pt);
\draw[color=black] (2.08,3.2) node {$1$};
\fill [color=black] (1,2) circle (1.5pt);
\draw[color=black] (1,1.71) node {$2$};
\fill [color=black] (2,1) circle (1.5pt);
\draw[color=black] (2.08, 0.8) node {$3$};
\fill [color=black] (3,2) circle (1.5pt);
\draw[color=black] (3.07,1.71) node {$4$};
\fill [color=black] (3,3) circle (1.5pt);
\draw[color=black] (3.07,3.2) node {$5$};
\fill [color=black] (4,2) circle (1.5pt);
\draw[color=black] (4.07,1.71) node {$6$};
\fill [color=black] (3.5,3) circle (1.5pt);
\draw[color=black] (3.57,3.2) node {$7$};
\fill [color=black] (4.5,3) circle (1.5pt);
\draw[color=black] (4.57,3.2) node {$8$};
\fill [color=black] (5,2.5) circle (1.5pt);
\draw[color=black] (5.04,2.71) node {$9$};
\fill [color=black] (5,1.5) circle (1.5pt);
\draw[color=black] (5.04,1.3) node {$10$};
\fill [color=black] (6,2) circle (1.5pt);
\draw[color=black] (6.2,2.11) node {$11$};
\end{scriptsize}
\end{tikzpicture}
\caption{A graph $G$}
\label{example:gen-graph}
\end{figure}
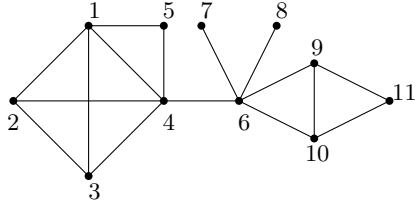

\begin{figure}[H]
\begin{tikzpicture}[scale=2]
\draw (1,2)-- (0.5,1.5);
\draw (0.5,1.5)-- (1,1);
\draw (1,1)-- (1.5,1.5);
\draw (1.5,1.5)-- (1,2);
\draw (1.5,1.5)-- (2,1.5);
\draw (3,1.5)-- (2.5,2);
\draw (3,1.5)-- (3.5,2);
\draw (3.5,2)-- (4,1.5);
\draw (4,1.5)-- (3.5,1);
\draw (3.5,1)-- (3,1.5);
\draw (3.5,2) -- (3.5, 1);
\begin{scriptsize}
\fill [color=black] (1,2) circle (0.8pt);
\draw[color=black] (1.06,2.1) node {$1$};
\fill [color=black] (0.5,1.5) circle (0.8pt);
\draw[color=black] (0.37,1.6) node {$2$};
\fill [color=black] (1,1) circle (0.8pt);
\draw[color=black] (1.1,0.9) node {$3$};
\fill [color=black] (1.5,1.5) circle (0.8pt);
\draw[color=black] (1.56,1.6) node {$4$};
\fill [color=black] (2,1.5) circle (0.8pt);
\draw[color=black] (2.07,1.6) node {$6$};
\fill [color=black] (3,1.5) circle (0.8pt);
\draw[color=black] (3.02,1.65) node {$6$};
\fill [color=black] (2.5,2) circle (0.8pt);
\draw[color=black] (2.57,2.1) node {$7$};
\fill [color=black] (3.5,2) circle (0.8pt);
\draw[color=black] (3.56,2.13) node {$9$};
\fill [color=black] (4,1.5) circle (0.8pt);
\draw[color=black] (4.1,1.6) node {$11$};
\fill [color=black] (3.5,1) circle (0.8pt);
\draw[color=black] (3.6,0.9) node {$10$};
\end{scriptsize}
\end{tikzpicture}
\caption{$G_1$ (left) \& $G_2$ (right)}
\label{example:ind-sub-graph}
\end{figure}
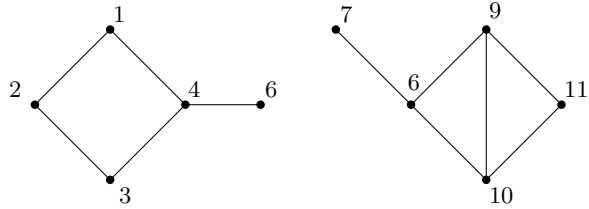

Using the graphs given in \Cref{example:gen-graph} and \Cref{example:ind-sub-graph}, we illustrate some of the concepts and invariants introduced in the previous paragraph. To begin with, note that the graph $G_1$ in \Cref{example:ind-sub-graph} is not an induced subgraph of $G$, since the edges $\{2,4\}$ and $\{1,3\}$ are not present in $G_1$. The graph $G_2$ is an induced subgraph of $G$. In $G$, $\{1,2,3\}$ is a clique, but not a maximal clique and $\{1,2,3,4\}$ is a maximal clique. Note that $\{1,4,5\}$ and $\{4,6\}$ are also maximal cliques. Note that $N_G(1) = \{2,3,4,5\}$ so that $\deg_G(1) = 4$. It can be seen that $G$ is not a block graph since $\{6,9,10,11\}$ is a block, but it is not a clique. In $G$, $\{1,4,6,9,10\}$ are internal vertices and $\{2,3,5,7,8,11\}$ are free vertices.  Hence $\iv(G) = 5$. Maximum size of a maximal clique of $G$ is $4$ and there are $7$ maximal cliques in $G$. Hence $\omega(G) = 4$ and $\mathcal{C}(G)= 7$.

A graph on the vertex set $[n]$ is said to be a \textit{closed graph} if there exists a labelling of its vertices so that for $i < k$ and $\{i,j\} \in E(G)$, $\{i,j\}, \{j, k\}\in E(G)$ for each $i < j < k$. It was shown by Herzog et al. that the closed graphs are precisely the graphs whose binomial edge ideals have a quadratic Gr\"obner basis, \cite{HHHKR}. One may observe that being closed graph is dependent on the labelling of the vertices. For example, a path $G$ on $4$ vertices with  $E(G) =\{\{3,1\},\{1,2\},\{2,4\}\}$ is not closed since $\{2,4\} \in E(G),$ but $\{3,4\} \notin E(G)$. If we relabel the vertices so that $E(G) = \{\{1,2\},\{2,3\},\{3,4\}\}$, then $G$ is closed.

%\textcolor{red}{We need not have bipartite graph. Add a picture of a closed graph and graph which is not closed.}

\subsection{Algebraic basics:} We have seen the definition of binomial edge ideals in the introduction section. Rauh, in \cite{Rauh13}, generalized this concept of binomial edge ideals to define \textit{generalized binomial edge ideals}. For a graph $G$ on the vertex set $[n]$ and a fixed integer $m\geq 2$, the generalized binomial edge ideal associated to $G$, denoted by $J_{K_m,G} \subset K[x_{ij} : i \in [m] \text{ and } j \in [n]]$, is the ideal generated by the binomials $x_{ik}x_{jl} - x_{il}x_{jk}$ for all $1\leq i, j \leq m$ and $\{k,l\} \in E(G)$. 

For the graph $G_1$ given in \Cref{example:gen-graph}, the binomial edge ideal is given by 
\begin{eqnarray*}
J_{G_1} & = &(x_1y_2-x_2y_1, x_1y_4-x_4y_1, x_2y_3-x_3y_2, x_3y_4-x_4y_3, x_4y_6-x_6y_4)\\ & \subset & K[x_1,x_2,x_3,x_4,x_6,y_1,y_2,y_3,y_4,y_6].
\end{eqnarray*}
And the generalized binomial edge ideal, for $m = 3$, is given by 
\begin{eqnarray*}
J_{K_3,G_1} & = &(x_{11}x_{22}-x_{12}x_{21}, x_{11}x_{24}-x_{14}x_{21}, x_{12}x_{23}-x_{13}x_{22}, x_{13}x_{24}-x_{14}x_{23}, x_{14}x_{26}-x_{16}x_{24}, \\
& & x_{11}x_{32}-x_{12}x_{31}, x_{11}x_{34}-x_{14}x_{31}, x_{12}x_{33}-x_{13}x_{32}, x_{13}x_{34}-x_{14}x_{33}, x_{14}x_{36}-x_{16}x_{34},\\
& & x_{21}x_{32}-x_{22}x_{31}, x_{21}x_{34}-x_{24}x_{31}, x_{22}x_{33}-x_{23}x_{32}, x_{23}x_{34}-x_{24}x_{33}, x_{24}x_{36}-x_{26}x_{34}) \\
& \subset  & K[x_{ij} : i \in \{1,2,3\}, j  \in \{1,2,3,4,6\}].
\end{eqnarray*}

Let $R = K[t_1,\ldots,t_n]$ be a polynomial ring on $n$ variables. We always consider the standard grading on the polynomial rings considered in this article, i.e., $\deg  t_i = 1$ for all $i=1,\ldots, n$. Let $M=\oplus_{d\geq 0}M_d$ be a finitely generated graded $S$-module. Let 
\[
0 \to F_p \overset{d_p}{\to} F_{p-1} \overset{d_{p-1}}{\to} \cdots \to F_1 \overset{d_1}{\to} F_0 \overset{d_0}{\to} M \to 0
\]
be the minimal free resolution of $M$. Write $F_i = \oplus_{j \in \mathbb{Z}} R(-j)^{\beta_{ij}(M)}$ for $i=0,\ldots,p$. From the minimal free resolution, one can read off several important invariants. Here we define the Castelnuovo-Mumford regularity:
\[
\reg(M) := \max\{j-i  : \beta_{ij}(M) \neq 0\}.
\]
The following is one of the most basic properties of the regularity, but probably the most useful in investigating the regularity of binomial edge ideals:

\begin{proposition}\cite[Proposition 18.6]{Peeva11}\label{reg-lemma}
Suppose that $0 \to U \to U' \to U'' \to 0$ is a short exact sequence of finitely generated graded $R$-modules with graded homomorphisms of degree $0$. Then,
\begin{enumerate}
    \item If $\reg(U') > \reg(U''),$ then $\reg(U) = \reg(U').$
    \item If $\reg(U') < \reg(U'')$, then $\reg(U) = \reg(U'')+1$.
    \item If $\reg(U') = \reg(U'')$, then $\reg(U) \leq  \reg(U'')+1$.
\end{enumerate}
\end{proposition}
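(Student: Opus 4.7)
The plan is to reduce the statement to a set of componentwise inequalities among the graded Betti numbers $\beta_{i,j}$ of the three modules, which I would extract from the long exact sequence obtained by applying $\tor_i^R(K,-)$ to the given short exact sequence. Since $\reg(M) = \max\{j-i : \beta_{i,j}(M) \neq 0\}$ by definition, any such Betti-level inequality translates directly into a regularity inequality, and the proposition then becomes a short case analysis.

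First I would write out a segment of the long exact $\tor$-sequence and, from exactness at each of its three terms, read off the three inequalities
\begin{align*}
\beta_{i,j}(U)   &\le \beta_{i,j}(U') + \beta_{i+1,j}(U''),\\
\beta_{i,j}(U')  &\le \beta_{i,j}(U)  + \beta_{i,j}(U''),\\
\beta_{i,j}(U'') &\le \beta_{i,j}(U') + \beta_{i-1,j}(U).
\end{align*}
Re-expressing these in terms of $j-i$ and taking suprema gives
\[
\reg(U) \le \max\{\reg(U'),\reg(U'')+1\},\quad \reg(U') \le \max\{\reg(U),\reg(U'')\},\quad \reg(U'') \le \max\{\reg(U'),\reg(U)-1\}.
\]
The shift by one in the first and third inequalities comes from the $\tor$-index shift $i \mapsto i\pm 1$ present in the long exact sequence.

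Once these three regularity inequalities are established, each of the three assertions falls out by a one-line case analysis. In case (1), the hypothesis $\reg(U')>\reg(U'')$ collapses the first bound to $\reg(U)\le\reg(U')$, while the second together with $\reg(U')>\reg(U'')$ forces $\reg(U')\le\reg(U)$; these combine to equality. In case (2), the first inequality gives $\reg(U)\le\reg(U'')+1$, and the third, under $\reg(U')<\reg(U'')$, forces $\reg(U'')\le\reg(U)-1$, supplying the matching lower bound. Case (3) is precisely the first inequality.

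The main (and really only nontrivial) obstacle is being careful in the passage from Betti-number inequalities to regularity inequalities: one must argue that a nonzero $\beta_{i,j}(U)$ forces a nonzero $\beta_{i,j}(U')$ or $\beta_{i+1,j}(U'')$ at the \emph{same} internal degree $j$, and then keep track of how $j-i$ shifts when the homological index shifts. Everything beyond that is formal, and because the hypotheses in (1) and (2) are strict, no ambiguity of the $\max$ arises in those cases.
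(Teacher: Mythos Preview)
Your argument is correct and is essentially the standard proof of this fact. Note, however, that the paper does not actually prove this proposition: it is stated without proof and attributed to Peeva's book \cite[Proposition 18.6]{Peeva11}. So there is no ``paper's own proof'' to compare against; your write-up supplies exactly the kind of proof one would expect to find in the cited reference, via the long exact $\tor$ sequence and the resulting Betti-number inequalities.
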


Another useful result regarding the resolution is the following:
\begin{proposition}\label{tensor-res}
Let $I \subset R_1 = K[x_1, \ldots, x_n]$ and $J \subset R_2 = K[y_1,\ldots,y_n]$ be homogeneous ideals. Let $I+J$ be the ideal generated by $I\cup J$ in $R = K[x_1,\ldots,x_n,y_1,\ldots,y_m] = R_1 \otimes_K R_2$. Let $\mathbf{F_{\bullet}}$ be a minimal free resolution of $R_1/I$ and $\mathbf{G_{\bullet}}$ be a minimal free resolution of $R_2/J$. Then $\mathbf{F_{\bullet}} \otimes \mathbf{G_{\bullet}}$ is a minimal free resolution of $R/I+J$.
\end{proposition}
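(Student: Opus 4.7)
The plan is to verify three things in sequence: that $\mathbf{F}_{\bullet}\otimes_K \mathbf{G}_{\bullet}$ is naturally a complex of free graded $R$-modules with augmentation to $R/(I+J)$; that it is acyclic in positive homological degrees; and that the complex is minimal.

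First, I would set up the tensor product. Since $R = R_1 \otimes_K R_2$, each $F_i \otimes_K G_j$ is naturally a graded $R$-module. If $F_i \cong \bigoplus_k R_1(-a_{ik})$ and $G_j \cong \bigoplus_\ell R_2(-b_{j\ell})$ as graded free modules, then the distributivity of $\otimes_K$ gives $F_i \otimes_K G_j \cong \bigoplus_{k,\ell} R(-a_{ik}-b_{j\ell})$, so each term is a graded free $R$-module. The total complex is built with the standard Koszul sign convention $d(f\otimes g) = d_F(f)\otimes g + (-1)^{|f|} f\otimes d_G(g)$, and the augmentation $F_0 \otimes_K G_0 \to R/(I+J)$ is induced by multiplication in $R$, using the canonical $K$-algebra identification $(R_1/I)\otimes_K (R_2/J)\cong R/(I+J)$.

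Next, to prove acyclicity, I would invoke the Künneth formula for complexes over a field. Since $K$ is a field, every $K$-module is flat and the Künneth spectral sequence collapses to
\[
H_n\bigl(\mathbf{F}_\bullet \otimes_K \mathbf{G}_\bullet\bigr) \;\cong\; \bigoplus_{i+j=n} H_i(\mathbf{F}_\bullet) \otimes_K H_j(\mathbf{G}_\bullet).
\]
As $\mathbf{F}_\bullet$ and $\mathbf{G}_\bullet$ are resolutions, all positive-degree homologies vanish, so the right-hand side vanishes for $n>0$ and equals $(R_1/I)\otimes_K (R_2/J)\cong R/(I+J)$ for $n=0$. This confirms that $\mathbf{F}_\bullet \otimes_K \mathbf{G}_\bullet$ is a free resolution of $R/(I+J)$.

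Finally, for minimality, I would observe that by the minimality of $\mathbf{F}_\bullet$, the entries of the matrices of the differentials $d_F$ lie in the irrelevant maximal ideal $\mathfrak{m}_1 = (x_1,\ldots,x_n)$ of $R_1$, and similarly the entries of $d_G$ lie in $\mathfrak{m}_2 = (y_1,\ldots,y_n)$ of $R_2$. Under the identification above, the entries of the total differential then lie in $\mathfrak{m}_1 R + \mathfrak{m}_2 R$, which is contained in the graded maximal ideal of $R$; hence the tensor product complex is minimal. The argument is entirely standard; the only mildly delicate step is keeping the Koszul signs consistent, and the only substantive ingredient is the Künneth formula for complexes of vector spaces, which is available because the ground ring is a field.
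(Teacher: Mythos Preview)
Your argument is correct and is the standard proof of this well-known fact. Note, however, that the paper does not actually give a proof of this proposition: it is stated as a preliminary result without proof, so there is no ``paper's own proof'' to compare against. Your three-step outline (identifying $F_i\otimes_K G_j$ as a graded free $R$-module, applying the K\"unneth formula over the field $K$ to obtain acyclicity, and checking minimality via the inclusion of the differential entries in the graded maximal ideal) is exactly the expected justification and would serve as a complete proof.
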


The consequence of this result is that one can obtain the graded Betti numbers of $I+J$ from the graded Betti numbers of $I$ and that of $J$. In particular, one can see that $\reg(R/I+J) = \reg(R_1/I) + \reg(R_2/J)$.

Throughout this article, $S$ will always denote the polynomial ring associated with the generalized binomial edge ideal $J_{K_m,G}$ of a given graph $G$. We will use different notations if there is more than one graph in a given context.

%\vskip 2mm \noindent
%\textcolor{red}{
%$\bullet$ Add regularity lemma\\
%$\bullet$ Tensor product of resolutions %(disjoint variables)
%}

\section{General Regularity Upper bounds}\label{sec:upper-bound}
In this section, we review prominent general upper bounds on regularity found in the literature. In some cases, we will provide concise proofs and more generalized results that establish regularity upper bounds for binomial edge ideals using the novel inductive method developed in \cite{Kumar-reg}.

The investigation into the regularity of binomial edge ideals began shortly after the concept was introduced in \cite{HHHKR} and \cite{Oh11}. This early research was primarily conducted by Saeedi Madani and Kiani \cite{KMEJC}, who focused on the linearity of the resolution of binomial edge ideals. They established an upper bound for the regularity of closed graphs. %A {\it closed graph} is defined as one for which the binomial edge ideal has a quadratic Gröbner basis with respect to some term order.  

\begin{theorem}\cite[Theorem 3.2]{KMEJC}
Let $G \in \GG$ be a closed graph. Then, $\reg(S/J_G) \le \mathcal{C}(G).$
\end{theorem}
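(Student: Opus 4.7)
The plan is to proceed by induction on the number of maximal cliques $c := \mathcal{C}(G)$. For the base case $c=1$, the graph $G$ is the complete graph $K_n$, and $J_{K_n}$ is the ideal of $2\times 2$ minors of a generic $2\times n$ matrix, so the Eagon--Northcott complex exhibits a $2$-linear resolution, giving $\reg(S/J_{K_n}) = 1 = \mathcal{C}(K_n)$.

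For the inductive step with $c\ge 2$, I would invoke the structural description of closed graphs: under a labeling witnessing closedness, the maximal cliques are consecutive integer intervals $F_i = [a_i,b_i]$ with $a_1 < \cdots < a_c$, $b_1 < \cdots < b_c$, and $a_{i+1} \le b_i$ for each $i$. Let $G'$ be the induced subgraph on $[1, b_{c-1}]$. Then $G'$ is still closed with $\mathcal{C}(G')=c-1$, and $J_G = J_{G'} + J_{F_c}$, where $J_{F_c}$ is the binomial edge ideal of the complete graph on $F_c$. From the Mayer--Vietoris short exact sequence
\[
0 \to S/(J_{G'}\cap J_{F_c}) \to S/J_{G'} \oplus S/J_{F_c} \to S/J_G \to 0,
\]
\Cref{reg-lemma} combined with the inductive bound $\reg(S/J_{G'}) \le c-1$ and $\reg(S/J_{F_c}) = 1$ reduces the desired inequality $\reg(S/J_G) \le c$ to a suitable bound on $\reg(S/(J_{G'} \cap J_{F_c}))$.

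The main obstacle is therefore controlling this intersection, which mixes variables along the overlap clique $F_{c-1}\cap F_c$ with those private to $F_c$ and to $G'$; a clean combinatorial description of this intersection for closed graphs would be needed. An alternative route that sidesteps the intersection computation rests on the quadratic Gr\"obner basis property of closed graphs: with respect to a suitable lexicographic order, the reduced Gr\"obner basis of $J_G$ consists of the defining binomials themselves, so $\ini(J_G) = (x_iy_j : \{i,j\} \in E(G),\ i<j)$ is squarefree, and consequently $\reg(S/J_G) \le \reg(S/\ini(J_G))$. The latter is the Stanley--Reisner ideal of an explicit simplicial complex encoding the interval clique structure of $G$, and an upper bound of $\mathcal{C}(G)$ can then be sought via Hochster's formula or a Taylor-complex argument tailored to the linearly ordered cliques $F_1,\ldots,F_c$, effectively pinning the regularity of each ``layer'' to one maximal clique.
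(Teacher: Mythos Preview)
Your proposal is not a proof: both routes are left open. In the Mayer--Vietoris approach you correctly identify the intersection $J_{G'}\cap J_{F_c}$ as the crux and then stop; without a concrete bound on its regularity the induction does not close. In the initial-ideal approach you correctly invoke the quadratic Gr\"obner basis characterization of closed graphs, but ``can then be sought via Hochster's formula or a Taylor-complex argument'' is a promise, not an argument. To actually finish that route you would need a specific combinatorial statement about the simplicial complex of $\ini(J_G)$ tied to the interval structure of the $F_i$, and you have not supplied one.

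For comparison: the present paper does not give its own proof of this particular theorem; it is cited from \cite{KMEJC}, and the paper only remarks that Saeedi Madani and Kiani ``adopted an initial ideal approach,'' which is precisely the direction of your second route. What the paper \emph{does} do, later in the same section, is subsume this result by a far more general and methodologically different argument: via Ohtani's decomposition (\Cref{oh-lem}, \Cref{ak-thm}) and induction on the number of internal vertices (\Cref{iv-lem}), it shows that any $(\reg,m)$-compatible map bounds the regularity (\Cref{ubT}), and then checks that $(m-1)\eta(-)$ is such a map (\Cref{comp-eta}). For $m=2$ this yields $\reg(S/J_G)\le \eta(G)\le \mathcal{C}(G)$ for \emph{every} graph, closed or not. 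So the paper's own machinery bypasses both the intersection computation and the initial-ideal analysis entirely, trading the interval/clique structure specific to closed graphs for a uniform inductive scheme on internal vertices.
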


Shortly thereafter, Matsuda and Murai \cite{MM} established the general upper and lower bounds for the regularity of binomial edge ideals. Their work introduced the first known general upper bound in the literature, presented as follows \cite{MM}:

\begin{theorem}\cite[Theorem 1.1]{MM} \label{first-gen-up}
Let $G \in \GG$. Then, $\reg(S/J_G) \le n(G)-c(G).$\footnote{The original statement by Matsuda and Murai \cite[Theorem 1.1]{MM} asserts that \(\reg(S/J_G) \leq n(G) - 1\). If \(G = \sqcup_{i=1}^{c(G)} G_i\), then \(S/J_G \cong S_1/J_{G_1} \otimes_{K} \cdots \otimes_{K} S_{c(G)}/J_{G_{c(G)}}\). Consequently, it follows that \(\reg(S/J_G) = \sum_{i=1}^{c(G)} \reg(S_i/J_{G_i})\), where \(S_i = K[x_j, y_j : j \in V(G_i)]\). 

Thus, \Cref{first-gen-up} can be easily derived by applying Matsuda and Murai's result to each component. We present this approach, as later in the section, we plan to provide a concise and alternative proof of the result using the unified concept of a compatible map.}
\end{theorem}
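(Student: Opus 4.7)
The plan is a two-step reduction. The first step, as flagged in the footnote, is to decompose $G$ into its connected components $G = \sqcup_{i=1}^{c(G)} G_i$ and apply \Cref{tensor-res}: tensoring the minimal free resolutions of $S_i/J_{G_i}$ yields a minimal free resolution of $S/J_G$, so
$$\reg(S/J_G) \;=\; \sum_{i=1}^{c(G)} \reg(S_i/J_{G_i}).$$
Since $n(G) - c(G) = \sum_i (n(G_i) - 1)$, it suffices to prove $\reg(S/J_G) \le n(G) - 1$ whenever $G$ is connected.

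In the connected case I would argue by induction on $n = n(G)$. The base case $n = 1$ is immediate since $J_G = 0$. For $n \ge 2$, pick a non-cut vertex $v$ of $G$ (one always exists, e.g.\ an endpoint of a longest induced path) and use the short exact sequence
$$0 \to \bigl(S/(J_G : x_v)\bigr)(-1) \xrightarrow{\cdot x_v} S/J_G \to S/(J_G + (x_v)) \to 0$$
in combination with \Cref{reg-lemma}. Modulo $x_v$ each generator $x_u y_v - x_v y_u$ of $J_G$ with $u \in N_G(v)$ collapses to the monomial $x_u y_v$, so after an obvious identification of rings $S/(J_G + (x_v))$ becomes a quotient by $J_{G \setminus v} + (x_u y_v : u \in N_G(v))$. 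Since $v$ is non-cut, $G \setminus v$ is still connected on $n - 1$ vertices, so the induction hypothesis bounds the contribution of $J_{G \setminus v}$. For the colon ideal I would use an Ohtani-type description of $J_G : x_v$ in terms of the graph obtained from $G$ by completing $N_G(v)$ to a clique, and then apply the inductive bound again.

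The main obstacle will be controlling how the auxiliary monomials $x_u y_v$ interact with the smaller binomial edge ideal $J_{G \setminus v}$, so that both outer terms in the short exact sequence have regularity at most $n - 1$ and \Cref{reg-lemma} actually delivers the claimed estimate. In the original Matsuda--Murai argument this requires a somewhat delicate direct computation; the alternative announced in the paper, built on the compatible map framework of \cite{Kumar-reg}, is intended to package the two inductive applications into a single uniform step and thereby make this estimate essentially automatic.
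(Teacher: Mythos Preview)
Your reduction to the connected case via \Cref{tensor-res} is fine and matches the paper. After that, however, you and the paper take quite different routes, and your route has a real gap.

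The paper does \emph{not} induct on $n(G)$ using the multiplication-by-$x_v$ sequence. Instead it shows (Proposition~\ref{comp-MM}) that $\psi(G)=n(G)-c(G)$ is a $(\reg,m)$-compatible map and then applies Theorem~\ref{ubT}. The engine of Theorem~\ref{ubT} is induction on $\iv(G)$ together with Ohtani's intersection $J_G=J_{G_v}\cap\bigl((x_v,y_v)+J_{G\setminus v}\bigr)$ for an \emph{internal} vertex $v$, which produces the Mayer--Vietoris sequence \eqref{ses}. All three graphs $G_v$, $G\setminus v$, $G_v\setminus v$ are honest graphs whose binomial edge ideals appear, so the inductive hypothesis applies cleanly to each term and \Cref{reg-lemma} finishes. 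No colon ideals, no stray monomials.

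Your sequence, by contrast, forces you to understand $J_G:x_v$ and $J_G+(x_v)$. The second is, as you say, $J_{G\setminus v}$ plus the monomials $x_uy_v$ for $u\in N_G(v)$; bounding the regularity of that mixed ideal is already nontrivial and is precisely the ``delicate direct computation'' you flag. The first is worse: the claim that $J_G:x_v$ is the binomial edge ideal of the graph obtained by completing $N_G(v)$ to a clique is not correct in general. The Ohtani-type colon result you are thinking of concerns $J_G:f_e$ for a binomial generator $f_e$, not $J_G:x_v$; coloning by a single variable typically produces an ideal that is not a binomial edge ideal at all, so the inductive hypothesis simply does not apply to $S/(J_G:x_v)$. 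Without a usable description of that colon, the left-hand term of your sequence is uncontrolled and the argument does not close. The paper's choice of the Ohtani intersection (rather than a variable colon) is exactly what makes the induction go through automatically.
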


Using conditions from Lyubeznik resolutions, Matsuda and Murai demonstrated that the regularity of the initial binomial edge ideal of a connected graph with respect to lexicographic order is bounded above by \( n(G) - 1 \). By applying the well-known result \(\reg(S/I) \le \reg(S/\text{in}_{\text{lex}}(I))\), they established this bound on regularity. Saeedi Madani and Kiani also adopted an initial ideal approach to derive their results. 

Matsuda and Murai, in their paper,   conjectured that this upper bound is attained only when \( G \) is a path graph on \([n]\), see \cite[Conjecture 3.10]{MM}.

At the same time, these ideals were generalized to a more general class of ideals by Rauh in \cite{Rauh13}, known as generalized binomial edge ideals, and then by Ene et al. \cite{EHHQ} to binomial edge ideals of a pair of graphs. %\textcolor{red}{At this point, I think we can give the formal definition of both generalized binomial edge ideals and binomial edge ideal of a pair of graphs. We are using it later, so it is better to give the definition.} \textcolor{blue}{How about giving the formal definition as part of the introduction?}
Soon after these generalizations, Saeedi Madani and Kiani took a stab at the regularity of these ideals and proved the generalized version of their own result for the generalized binomial edge ideal of closed graphs. They proved the following: 
\begin{theorem}\cite[Theorem 13]{MK13-pair}
Let $G \in \GG$ be a closed graph. Then, for any positive integer $m\ge 2$, $$\reg \left(S/J_{K_m,G}\right) \le \min \left\{ { m \choose 2} \mathcal{C}(G), ~e(G)\right\}.$$
\end{theorem}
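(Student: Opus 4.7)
The plan is to prove the two upper bounds $\binom{m}{2}\mathcal{C}(G)$ and $e(G)$ separately; the minimum then follows at once. For both, I would first invoke the fact (due to Rauh, generalizing Herzog et al.) that when $G$ is closed, the natural generators of $J_{K_m, G}$ form a quadratic Gr\"obner basis with respect to a suitable lexicographic order on the vertex variables. Consequently $\ini_{<}(J_{K_m, G})$ is a squarefree monomial ideal of degree $2$, and since $\reg(S/J_{K_m, G}) \le \reg(S/\ini_{<}(J_{K_m, G}))$, I may work with the Stanley--Reisner ring of the associated simplicial complex $\Delta$.

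For the bound $\reg(S/J_{K_m, G}) \le \binom{m}{2}\mathcal{C}(G)$, I would induct on $\mathcal{C}(G)$. In the base case $\mathcal{C}(G) = 1$ the graph $G$ is a clique $K_r$, and $J_{K_m, K_r}$ is the ideal of $2 \times 2$ minors of a generic $m \times r$ matrix, whose regularity $\min(m, r) - 1$ is at most $\binom{m}{2}$. For the inductive step, use that the closed labeling orders the maximal cliques of $G$ so that two consecutive cliques share at most one vertex; peel off the rightmost clique $C$ and apply \Cref{reg-lemma} to a short exact sequence relating $S/J_{K_m, G}$, $S/J_{K_m, G \setminus C}$ and $S/J_{K_m, C}$. (The overlap at a single cut vertex prevents a direct application of \Cref{tensor-res}.)

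For the bound $\reg(S/J_{K_m, G}) \le e(G)$, I would also proceed by induction, this time on $e(G)$. The base case $e(G) = 1$ is $J_{K_m, K_2}$, whose coordinate ring has regularity $1$ by the Eagon--Northcott resolution. For the inductive step, pick an edge $e$ whose removal keeps $G$ closed (e.g.\ the edge joining the two highest-labeled vertices of the rightmost clique) and use an appropriate short exact sequence---either the Mayer--Vietoris sequence $0 \to S/(J_{K_m, G - e} \cap J_{K_m, \{e\}}) \to S/J_{K_m, G - e} \oplus S/J_{K_m, \{e\}} \to S/J_{K_m, G} \to 0$, or a colon-based sequence for the whole subideal $J_{K_m, \{e\}}$---and apply \Cref{reg-lemma}. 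The key is that the error term (the intersection or colon) must be shown to have regularity at most $\reg(S/J_{K_m, G - e})$, so that the increment is at most $+1$; inserting the $\binom{m}{2}$ generators of $J_{K_m, \{e\}}$ one at a time would only give the weaker bound $\binom{m}{2}e(G)$.

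The main obstacle will be identifying the intersection (or colon) ideal in the $e(G)$ induction in a sufficiently concrete form---typically as a generalized binomial edge ideal of a related graph, possibly augmented by monomial relations---and verifying the required regularity bound on it by a secondary induction. A subsidiary difficulty is the analogous analysis for the $\binom{m}{2}\mathcal{C}(G)$ bound, where the overlap at the shared cut vertex between consecutive maximal cliques forces a similar (but less delicate) short-exact-sequence computation.
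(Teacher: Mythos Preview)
There is a genuine error in your induction for the $\binom{m}{2}\mathcal{C}(G)$ bound: it is \emph{not} true that consecutive maximal cliques of a closed graph share at most one vertex. Closed graphs are precisely the graphs admitting a labeling whose maximal cliques are intervals $[a_i,b_i]$, and consecutive intervals may overlap in arbitrarily many vertices---for instance the closed graph on $[4]$ with maximal cliques $\{1,2,3\}$ and $\{2,3,4\}$. Your ``peel off the rightmost clique'' step therefore does not reduce to a single cut vertex, and the short exact sequence you envisage involves the determinantal ideal of a genuine submatrix rather than a variable ideal; the bookkeeping you describe does not go through as stated. The $e(G)$ argument, as you yourself acknowledge, is only a sketch with the identification of the colon/intersection ideal left unresolved.

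Note also that the paper does not give its own proof of this theorem: it is quoted from Saeedi Madani and Kiani, whose original argument (alluded to in the paragraph following \Cref{first-gen-up}) stays entirely on the initial-ideal side and bounds the regularity of the resulting squarefree monomial ideal. The paper's own contribution is a completely different and stronger route via $(\reg,m)$-compatible maps (\Cref{ubT}): this yields $\reg(S/J_{K_m,G}) \le (m-1)\eta(G)$ (\Cref{cor-eta-m}) and $\reg(S/J_{K_m,G}) \le n(G)-c(G)$ (\Cref{up-vert}) for \emph{all} graphs, closed or not. Since $(m-1)\eta(G) \le (m-1)\mathcal{C}(G) \le \binom{m}{2}\mathcal{C}(G)$ and $n(G)-c(G) \le e(G)$, these subsume the cited bound. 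That approach bypasses initial ideals and edge-by-edge induction entirely, replacing them by Ohtani's decomposition at an internal vertex and induction on $\iv(G)$.
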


Saeedi Madani and Kiani raised a question about the upper bound for the regularity of generalized binomial edge ideals in their article. This question has since been frequently cited as a conjecture in the literature on binomial edge ideals, states the following:

\begin{question} \label{reg-que}\cite[Question]{MK13-pair}
Let $G \in \GG$. Then, for any $m\ge 2$, $$\reg \left(S/J_{K_m,G}\right) \le \min \left\{ { m \choose 2} \mathcal{C}(G), ~e(G)\right\}.$$
\end{question}

Significant progress was made on the aforementioned question regarding the binomial edge ideal, particularly for the case where \( m = 2 \) and various classes of graphs. Ene and Zarojanu \cite{EZ} were the first to initiate progress on this conjecture, proving it for block graphs. We advanced the conjecture in our research \cite{JA1}, where we established it for fan graphs, a subclass of chordal graphs. Much of this research relies on an inductive technique that, until 2019, had been applicable to only a few classes of graphs.

In 2020, Rouzbahani Malayeri, Saeedi Madani, and Kiani \cite{RSK21}, and independently Kumar \cite{AR3}, proved the conjecture for the case when \( m = 2 \) and \( G \) is a chordal graph. 

\begin{theorem} \cite[Theorem 3.5]{RSK21}
    Let $G \in \GG$ be a chordal graph. Then, $\reg(S/J_G) \le \mathcal{C}(G).$
\end{theorem}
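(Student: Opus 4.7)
The plan is to argue by strong induction on $\mathcal{C}(G)$. The base case $\mathcal{C}(G)=1$ reduces, via \Cref{tensor-res} applied to isolated vertices, to $G$ being a complete graph $K_n$; here $J_{K_n}$ is the ideal of $2 \times 2$ minors of a generic $2 \times n$ matrix, whose resolution (coming from the Eagon--Northcott complex) is $2$-linear, so $\reg(S/J_{K_n})=1=\mathcal{C}(G)$.

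For the inductive step, I would exploit the existence of a simplicial vertex $v$ in $G$, guaranteed since $G$ is chordal, and let $F$ denote the unique maximal clique containing $v$. The main tool would be a short exact sequence of the form
$$0 \to \frac{S}{J_G : f} \xrightarrow{\cdot f} \frac{S}{J_G} \to \frac{S}{(J_G,f)} \to 0,$$
for a carefully chosen element $f$---for instance $f=x_v$, $f=y_v$, or a binomial $f=x_v y_w - x_w y_v$ with $\{v,w\}$ an edge of $F$. The quotient $(J_G,f)$ can typically be expressed, after a change of variables, in terms of the binomial edge ideal of a proper subgraph (obtained by deleting $v$ or by deleting an edge) together with an auxiliary monomial ideal whose contribution to regularity is controlled by induction and \Cref{tensor-res}.

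The colon ideal $(J_G:f)$ is where the novelty lies: it admits a description as the binomial edge ideal (or a closely related ideal) of a graph obtained by absorbing the clique $F$ into adjacent cliques; under the right choice of $f$, this modified graph is again chordal and has $\mathcal{C}(G)-1$ maximal cliques. Once both $(J_G:f)$ and $(J_G,f)$ are so identified and shown to satisfy regularity bounds $\le \mathcal{C}(G)-1$ and $\le \mathcal{C}(G)$ respectively, applying \Cref{reg-lemma} to the displayed short exact sequence delivers the desired bound $\reg(S/J_G) \le \mathcal{C}(G)$.

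The principal obstacle is the combinatorial analysis of the colon ideal: one must pick $f$ so that the associated graph of $J_G:f$ is chordal with exactly $\mathcal{C}(G)-1$ maximal cliques, and simultaneously verify that the contribution from $(J_G,f)$ stays within the allotted budget. This typically forces one to select a simplicial vertex that possesses a simplicial neighbor (whose existence in a chordal graph follows from the perfect elimination ordering), and to perform a careful structural induction aligning the short exact sequence with the recursive decomposition of $G$ along $F$; this is precisely the technical core of \cite{RSK21} and \cite{AR3}.
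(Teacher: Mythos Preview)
Your proposal takes a genuinely different route from the one used in \cite{RSK21} and \cite{AR3} as described in this survey. Those papers do \emph{not} proceed via a colon-ideal short exact sequence at a simplicial vertex; rather, they use Ohtani's intersection lemma (\Cref{oh-lem}): for an \emph{internal} vertex $v$ one has $J_G=J_{G_v}\cap\bigl((x_v,y_v)+J_{G\setminus v}\bigr)$, which yields the Mayer--Vietoris sequence \eqref{ses}. The induction is on $\iv(G)$, the number of internal vertices, and \Cref{iv-lem} guarantees that all three graphs $G_v$, $G\setminus v$, $G_v\setminus v$ have strictly fewer internal vertices. One then checks, for chordal $G$, that $\mathcal{C}(G_v)<\mathcal{C}(G)$ and $\mathcal{C}(G\setminus v)\le\mathcal{C}(G)$, so \Cref{reg-lemma} closes the induction. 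Your closing sentence, that the colon-ideal analysis ``is precisely the technical core of \cite{RSK21} and \cite{AR3},'' is therefore mistaken.

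As for your approach on its own merits: it is too schematic to count as a proof. You never commit to a specific $f$, and the crucial claim---that $J_G:f$ is the binomial edge ideal of a chordal graph with $\mathcal{C}(G)-1$ maximal cliques---is asserted but not verified for any choice. For $f=x_v$ or $f=y_v$ the colon is generally not a binomial edge ideal at all, and for a binomial $f$ the description of $J_G:f$ is delicate even in the closed-graph case. Without that identification the induction has no foothold. The Ohtani route avoids this entirely: the pieces $J_{G_v}$, $J_{G\setminus v}$, $J_{G_v\setminus v}$ are binomial edge ideals of explicitly described graphs by construction, and chordality is preserved automatically.
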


Additionally, Kumar \cite{AR3} also proved the conjecture for \( m = 2 \) in two distinct families of non-chordal graphs.

\begin{theorem} \cite[Theorems 3.7, 3.11 \& 3.15]{AR3}
    Let \( G \in \mathcal{G} \) be either a chordal graph, a quasi-block graph, or a semi-block graph. Then, $\reg(S/J_G) \le \mathcal{C}(G).$ Moreover, if $G$ has a cut edge, then $\reg(S/J_G) < \mathcal{C}(G).$
\end{theorem}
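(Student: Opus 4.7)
The plan is to prove all three bounds simultaneously by induction on $\mathcal{C}(G)$, with the base case $\mathcal{C}(G)=1$ giving $G=K_n$ (possibly with isolated vertices) and $\reg(S/J_G)=1$. The workhorse will be the short exact sequence
$$0 \to \frac{S}{J_G : f} \to \frac{S}{J_G} \to \frac{S}{(J_G,f)} \to 0$$
for a carefully chosen $f\in S$, combined with \Cref{reg-lemma} to get
$$\reg(S/J_G)\le \max\bigl\{\reg(S/(J_G:f)),\; \reg(S/(J_G,f))+1\bigr\}.$$
The strategy is then to choose $f$ so that both $(J_G:f)$ and $(J_G,f)$ can be identified (perhaps after a variable substitution) with binomial edge ideals of graphs whose number of maximal cliques is strictly smaller, or at least for which induction applies.

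For the chordal case, I would pick a simplicial vertex $v$ that lies in a unique maximal clique $Q$ (a leaf of the clique tree of $G$) and take $f$ to be either $x_v$, the binomial $x_u y_v - x_v y_u$ for a neighbour $u\in Q$, or an appropriate product along the lines of the inductive technique from \cite{Kumar-reg}. The quotient $S/(J_G,f)$ should be identifiable with (a polynomial extension of) $S'/J_{G\setminus v}$, whose number of maximal cliques drops by exactly one since $v$ is simplicial. The colon ideal $(J_G:f)$ should collapse $Q$ to a point, producing a graph $G'$ with $\mathcal{C}(G')=\mathcal{C}(G)-1$; because $N_G(v)$ is already a clique, no new maximal cliques are created. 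Applying the induction hypothesis to both sides and using \Cref{reg-lemma} then yields $\reg(S/J_G)\le \mathcal{C}(G)$.

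For quasi-block and semi-block graphs, the same scheme should work because these classes are designed to admit a vertex or edge whose removal exhibits a similar simplicial behaviour at the level of maximal cliques; the inductive step is an adaptation of the chordal argument, with the choice of $f$ adjusted to the local structure around the distinguished vertex. The \emph{moreover} assertion follows by the same exact sequence applied to the binomial $f=x_uy_v-x_vy_u$ associated to a cut edge $\{u,v\}$: since $\{u,v\}$ is itself a maximal clique and its removal splits $G$ into components whose binomial edge ideals live in disjoint sets of variables, \Cref{tensor-res} allows the regularities of both end terms to be computed additively, and each is shown to be at most $\mathcal{C}(G)-2$, giving the strict inequality. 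The main obstacle is the identification of $(J_G:f)$ with an honest binomial edge ideal: in general this colon ideal is larger than $J_{G'}$ for any single graph $G'$, and one must argue—via a Gröbner basis computation or a careful primary decomposition analysis building on the Ohtani--Herzog primes $P_T(G)$—that modulo the extra generators the quotient still has regularity bounded by $\mathcal{C}(G)-1$. Verifying this identification uniformly across the three classes (chordal, quasi-block, semi-block) is where most of the technical work will lie.
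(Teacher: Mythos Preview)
The paper does not prove this theorem itself---it is quoted from \cite{AR3}---but it does describe the method used there: one applies Ohtani's decomposition $J_G = J_{G_v}\cap\bigl((x_v,y_v)+J_{G\setminus v}\bigr)$ at an internal vertex $v$ (\Cref{oh-lem}), passes to the short exact sequence \eqref{ses}, and inducts on the number of internal vertices $\iv(G)$, which strictly drops for each of $G_v$, $G\setminus v$ and $G_v\setminus v$ by \Cref{iv-lem}. Your plan is structurally different (induction on $\mathcal{C}(G)$, a colon-ideal exact sequence with a single element $f$), and it has two concrete gaps.

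First, in the ``moreover'' clause you take $f=x_uy_v-x_vy_u$ for the cut edge $\{u,v\}$. But this binomial is a \emph{generator} of $J_G$, so $(J_G,f)=J_G$ and $(J_G:f)=S$; your exact sequence degenerates to $0\to 0\to S/J_G\to S/J_G\to 0$ and yields nothing. The argument in \cite{AR3} instead runs the Ohtani sequence at an internal vertex incident to the cut edge and tracks how $\mathcal{C}(-)$ behaves under $G\mapsto G_v$, $G\mapsto G\setminus v$ and $G\mapsto G_v\setminus v$.

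Second, you yourself flag the identification of $(J_G:f)$ as ``the main obstacle,'' and you do not resolve it. This is not a routine step: colons of binomial edge ideals by a variable or by an edge-binomial are in general \emph{not} binomial edge ideals of any graph, and no uniform Gr\"obner or primary-decomposition argument of the type you sketch is known to deliver the bound $\mathcal{C}(G)-1$. The whole point of the Ohtani decomposition is that it replaces such colons by an intersection of ideals that \emph{are} (up to adjoining variables) binomial edge ideals of explicit graphs, so the induction can run on honest combinatorial invariants. Your proposal does not supply a substitute for this mechanism, so as written it is a plan rather than a proof.
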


The work of \cite{RSK21, AR3} utilizes the regularity lemma on a short exact sequence derived from a simple but powerful lemma by Ohtani \cite{Oh11}. It also employs an inductive argument based on the number of internal vertices. Below, we present the above-mentioned lemma due to Ohtani:

\begin{lemma} \label{oh-lem}\cite[Lemma 4.8]{Oh11}
    Let $G\in \GG$. If $v$ is an internal vertex of $G$, then $$ J_G= J_{G_v} \cap ((x_v,y_v)+J_{G\setminus v}).$$ 
\end{lemma}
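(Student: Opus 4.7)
The forward inclusion $J_G \subseteq J_{G_v} \cap ((x_v,y_v) + J_{G\setminus v})$ is immediate on generators. Each generator $g_{ij} = x_iy_j - x_jy_i$ of $J_G$ corresponds to an edge $\{i,j\} \in E(G) \subseteq E(G_v)$, so $g_{ij} \in J_{G_v}$. Moreover, $g_{ij} \in (x_v,y_v)$ whenever $v \in \{i,j\}$, and $g_{ij} \in J_{G\setminus v}$ otherwise, so in either case $g_{ij} \in (x_v,y_v) + J_{G\setminus v}$.

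For the reverse inclusion I will compare primary decompositions. Recall from Herzog et al.\ that
$$J_G \,=\, \bigcap_{S \subseteq V(G)} P_S(G), \qquad P_S(G) \,=\, (x_i,y_i : i \in S) + \sum_{j} J_{\widetilde{T_j(S)}},$$
where $T_1(S),\ldots,T_{k(S)}(S)$ are the connected components of $G \setminus S$ and $\widetilde{T}$ denotes the complete graph on $V(T)$. I split the intersection as
$$J_G \,=\, \Bigl(\bigcap_{S \ni v} P_S(G)\Bigr) \cap \Bigl(\bigcap_{S \not\ni v} P_S(G)\Bigr),$$
and the task reduces to identifying the first factor with $(x_v,y_v)+J_{G\setminus v}$ and the second with $J_{G_v}$. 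For the first factor, when $v \in S$ one has $(x_v,y_v) \subseteq P_S(G)$ and $G \setminus S = (G\setminus v) \setminus (S\setminus\{v\})$ as graphs, giving $P_S(G) = (x_v,y_v) + P_{S\setminus\{v\}}(G\setminus v)$; intersecting over $S \ni v$ reproduces $(x_v,y_v) + J_{G\setminus v}$.

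For the second factor, the key combinatorial input is that when $S \not\ni v$, every neighbor of $v$ outside $S$ lies in the unique connected component $T$ of $G\setminus S$ containing $v$. Consequently, $J_{\widetilde{T}} \subseteq P_S(G)$ already contains every binomial $x_uy_w - x_wy_u$ with $u,w \in N_G(v) \setminus S$, which are precisely the extra generators of $J_{G_v}$ beyond $J_G$. Adding these edges does not alter the components of $G\setminus S$, so $P_S(G) = P_S(G_v)$ for every $S \not\ni v$. Finally, since $\{v\} \cup N_G(v)$ forms a clique in $G_v$, the vertex $v$ is a free vertex of $G_v$; in the cut-vertex characterization of the minimal primes of $J_{G_v}$, only cut vertices of the appropriate induced subgraphs may belong to $S$, so $P_S(G_v)$ with $v \in S$ is redundant in the primary decomposition. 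Therefore $\bigcap_{S \not\ni v} P_S(G) = \bigcap_{S \not\ni v} P_S(G_v) = J_{G_v}$, completing the proof. The main obstacle I anticipate is this final redundancy check --- one must invoke the cut-vertex criterion with care to discard exactly the primes $P_S(G_v)$ with $v \in S$ from the minimal primary decomposition of $J_{G_v}$.
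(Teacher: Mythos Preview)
The paper does not supply its own proof of this lemma; it is quoted verbatim from Ohtani's article \cite[Lemma~4.8]{Oh11} and used as a black box. So there is nothing in the present paper to compare against.

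Your argument is correct. The forward inclusion is immediate, and your primary-decomposition split handles the reverse inclusion cleanly. Two remarks on the details you flagged. First, for the factor with $v\in S$, the identity $\bigcap_{T\subseteq V(G\setminus v)}\bigl((x_v,y_v)+P_T(G\setminus v)\bigr)=(x_v,y_v)+J_{G\setminus v}$ holds because the $P_T(G\setminus v)$ live in the subring $K[x_j,y_j:j\neq v]$ and $S$ is free over it, so intersection commutes with adding $(x_v,y_v)$. Second, the redundancy check you were worried about goes through by taking $T=S\setminus\{v\}$: since $N_G(v)$ is already a clique in $G_v$, the vertices of $N_G(v)\setminus S$ lie in a single component of $G_v\setminus S$, and adding $v$ back either joins that component (if $N_G(v)\not\subseteq S$) or creates an isolated vertex (if $N_G(v)\subseteq S$). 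In both cases one checks directly that $P_T(G_v)\subseteq P_S(G_v)$, so every prime with $v\in S$ is absorbed by one with $v\notin S$. Note that the hypothesis that $v$ is internal is not actually used in the identity itself; it only guarantees that $G_v\neq G$, so that the decomposition is nontrivial.
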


Kumar extended this result to the case of generalized binomial edge ideals:

\begin{theorem} \label{ak-thm}\cite[Theorem 3.2]{Kumar-reg}
    Let $G\in \GG$ and $m \ge 2$. If $v$ is an internal vertex of $G$, then $$ J_{K_m,G}= J_{K_m,G_v} \cap ((x_{i,v}~:~ 1 \le i \le m)+J_{K_m,G\setminus v}).$$ 
\end{theorem}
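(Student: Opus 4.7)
My plan is to prove the stated equality by establishing both containments separately, with the reverse inclusion handled via a variety computation that exploits the fact that both sides are radical ideals.

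For $J_{K_m,G}\subseteq J_{K_m,G_v}\cap\bigl((x_{i,v}:i\in[m])+J_{K_m,G\setminus v}\bigr)$, I would check a generator $x_{ik}x_{jl}-x_{il}x_{jk}$ for $\{k,l\}\in E(G)$: it sits in $J_{K_m,G_v}$ because $E(G)\subseteq E(G_v)$, and it sits in the second factor either because it already belongs to $J_{K_m,G\setminus v}$ (when $v\notin\{k,l\}$) or because every term carries a factor $x_{i,v}$ (when $v\in\{k,l\}$).

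For the reverse containment, I would first record that both sides are radical. The left side is radical via Rauh's primary decomposition of generalized binomial edge ideals, and the right side is radical because $J_{K_m,G_v}$ is radical for the same reason, while the second factor reduces modulo the linear ideal $(x_{i,v}:i\in[m])$ to $J_{K_m,G\setminus v}$ in a polynomial subring and is therefore radical as well. Since both sides are radical, it suffices (by the Nullstellensatz, after base change to $\overline{K}$) to match the two vanishing loci. I would interpret a point as an $m\times n$ matrix $X=(x_{ij})$ with columns $X_{1},\ldots,X_{n}$; in this language, the condition $X\in V(J_{K_m,G})$ just says that for each edge $\{k,l\}\in E(G)$ the columns $X_{k}$ and $X_{l}$ are proportional.

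Given such a point $X$, I would split on whether the column $X_{v}$ is zero. If $X_{v}=0$, then $X\in V\bigl((x_{i,v}:i)+J_{K_m,G\setminus v}\bigr)$ directly. If $X_{v}\ne 0$, then for every neighbor $k\in N_G(v)$ the column $X_{k}$ must be a scalar multiple of $X_{v}$, so for any two neighbors $k,l\in N_G(v)$ the columns $X_{k}$ and $X_{l}$ are mutually proportional, which forces every $2\times 2$ minor of $[X_{k}\ X_{l}]$ to vanish; combined with the old edges of $G$, this places $X$ in $V(J_{K_m,G_v})$. The two cases together yield $V(J_{K_m,G})\subseteq V(J_{K_m,G_v})\cup V\bigl((x_{i,v}:i)+J_{K_m,G\setminus v}\bigr)$, and radicality then upgrades this to the desired ideal containment. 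The main obstacle I anticipate is the radicality of $J_{K_m,G}$ for $m\ge 3$, which must be imported from Rauh's work (or, equivalently, from a squarefree initial ideal argument); once that is available, the rest of the proof is the short column-rank splitting described above.
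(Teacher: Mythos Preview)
The paper does not supply its own proof of this statement; it is quoted verbatim from \cite[Theorem 3.2]{Kumar-reg} and used as a black box for the short exact sequence~\eqref{ses}, so there is no in-paper argument to compare against.

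That said, your proposal is a valid proof. The forward inclusion follows exactly as you describe from $E(G)\subseteq E(G_v)$ and $E(G\setminus v)\subseteq E(G)$. For the reverse inclusion, the key input you flag---that $J_{K_m,G}$ is radical for all $m\ge 2$---is indeed available (Rauh shows the initial ideal with respect to a suitable term order is squarefree), and the intersection of two radical ideals is radical, so both sides are radical. Your column-rank interpretation is correct: a closed point of $V(J_{K_m,G})$ is an $m\times n$ matrix in which the two columns indexed by each edge span a line, and your case split on whether $X_v=0$ cleanly produces the containment $V(J_{K_m,G})\subseteq V(J_{K_m,G_v})\cup V\bigl((x_{i,v})_i+J_{K_m,G\setminus v}\bigr)$, which by the Nullstellensatz over $\overline{K}$ and radicality gives the missing inclusion. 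This geometric approach is in fact the one Kumar uses in the cited reference, so your route matches the original source even though the present survey omits the argument.
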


For each $v \in V(G)$, we set $P_v = (x_{i,v}~:~ 1 \le i \le m) \subseteq S$. \Cref{ak-thm} naturally provide the following short exact sequence: 

\begin{align}\label{ses}
    0 \to \frac{S}{J_{K_m,G}}\to \frac{S}{J_{K_m,G_v}} \oplus \frac{S}{P_v+J_{K_m,G\setminus v}} \to \frac{S}{P_v+J_{K_m,G_v \setminus v}}\to 0.
\end{align} 

In combinatorial approaches, one of the main proof techniques is induction on one of the parameters. In \cite{Kumar-reg}, Kumar proved the following lemma which turns out to be extremely useful in proofs using induction on the number of internal vertices. 

\begin{lemma}\label{iv-lem} \cite[Lemma 3.4]{Kumar-reg}
    Let $G\in \GG$. If $v$ is an internal vertex of $G$, then $\iv(G_v\setminus v)=\iv(G_v)<\iv(G)$ and $\iv(G\setminus v)<\iv(G).$
\end{lemma}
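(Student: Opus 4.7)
The plan is to verify each of the three statements by tracking how maximal cliques transform under the two operations involved: forming $G_v$ (which adds every edge between pairs of vertices in $N_G(v)$) and deleting the vertex $v$. Throughout I use the definition that $u$ is internal in a graph $H$ precisely when $u$ lies in at least two maximal cliques of $H$.

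For $\iv(G\setminus v)<\iv(G)$, I note that $v$ is internal in $G$ but $v\notin V(G\setminus v)$, so it suffices to show that every vertex $u$ which is internal in $G\setminus v$ is internal in $G$. Given distinct maximal cliques $C_1,C_2$ of $G\setminus v$ through $u$, I would extend each to a maximal clique $D_i$ of $G$. If $D_1=D_2$, then $C_1\cup C_2\subseteq D_1$ would be a clique of $G\setminus v$ (since $v\notin C_1\cup C_2$) strictly containing both $C_i$, contradicting their maximality.

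For $\iv(G_v)<\iv(G)$, the key step is that $v$ becomes simplicial in $G_v$: because $N_{G_v}(v)=N_G(v)$ is now a clique of $G_v$, the set $M:=N_G(v)\cup\{v\}$ is the unique maximal clique of $G_v$ containing $v$, so $v$ is not internal in $G_v$. It remains to show every $u\ne v$ internal in $G_v$ is internal in $G$. For any maximal clique $C\ne M$ of $G_v$ through $u$, one has $v\notin C$ and the maximality of $C$ forces $C\not\subseteq N_G(v)$ (otherwise $C\cup\{v\}$ would enlarge it). Hence $C$ contains some $w\notin N_G(v)\cup\{v\}$; since every edge added to form $G_v$ has both endpoints in $N_G(v)$, the edge $\{u,w\}$ already lies in $E(G)$. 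Extending $\{u,w_1\}$ and $\{u,w_2\}$ from two distinct such cliques to maximal cliques $D_1,D_2$ of $G$, the chain $v\in D_i\Rightarrow D_i\subseteq M\Rightarrow w_i\in N_G(v)\cup\{v\}$ forces $D_1\ne D_2$, so $u$ is internal in $G$.

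The main obstacle is the equality $\iv(G_v\setminus v)=\iv(G_v)$. Here I would set up a correspondence between the maximal cliques of $G_v$ and those of $G_v\setminus v$ that preserves, for each $u\ne v$, the collection of maximal cliques containing $u$. Every maximal clique $C\ne M$ of $G_v$ already avoids $v$ and remains maximal in $G_v\setminus v$, while $M$ corresponds to $M\setminus\{v\}=N_G(v)$. The delicate step is to show that $N_G(v)$ is itself maximal in $G_v\setminus v$, i.e., that no vertex outside $N_G(v)\cup\{v\}$ is adjacent to every element of $N_G(v)$ in $G_v$. This is where the hypothesis that $v$ is internal in $G$ is meant to intervene: combining the two distinct maximal cliques of $G$ through $v$ with such a dominating vertex should yield the contradiction. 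Once this is in place, the count of maximal cliques through any $u\ne v$ is preserved, giving the equality of internal-vertex counts.
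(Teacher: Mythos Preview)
First, note that the paper does not prove this lemma; it merely cites it from \cite{Kumar-reg}. There is no proof in the present paper to compare against, so I will evaluate your argument on its own merits.

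Your proof of $\iv(G\setminus v)<\iv(G)$ is fine.

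For $\iv(G_v)<\iv(G)$ your argument has two gaps. First, you only treat the case where $u$ lies in two maximal cliques of $G_v$ \emph{different from} $M$; the case where one of the two cliques through $u$ is $M$ itself is not addressed. Second, even in the case you do treat, the conclusion $D_1\ne D_2$ does not follow: the chain you wrote only shows $v\notin D_i$, and nothing rules out $u,w_1,w_2$ all lying in a single maximal clique of $G$. The cleaner route is the contrapositive: if $u\ne v$ is simplicial in $G$, show it is simplicial in $G_v$. For $u\notin N_G(v)$ this is immediate since $N_{G_v}(u)=N_G(u)$; for $u\in N_G(v)$, simpliciality of $u$ in $G$ together with $v\in N_G(u)$ forces $N_G(u)\subseteq N_G(v)\cup\{v\}$, whence $N_{G_v}(u)=M\setminus\{u\}$ is a clique in $G_v$.

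The real problem is the equality $\iv(G_v\setminus v)=\iv(G_v)$: your ``delicate step'' cannot be completed because the equality is \emph{false} as stated. Take $G=C_4$ with vertices $v,a,w,b$ in cyclic order. Then $v$ is internal, $N_G(v)=\{a,b\}$, and $w$ is adjacent to both $a$ and $b$, so your proposed contradiction never materializes. Here $G_v$ has exactly the two maximal cliques $\{v,a,b\}$ and $\{a,b,w\}$, giving $\iv(G_v)=2$, while $G_v\setminus v\cong K_3$ has $\iv(G_v\setminus v)=0$. What \emph{does} hold---and what is actually used in the inductive arguments later in the paper---is the weaker chain $\iv(G_v\setminus v)\le\iv(G_v)<\iv(G)$; the first inequality follows at once from your part-one argument applied to the graph $G_v$ and its vertex $v$. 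The equality sign in the statement appears to be a misstatement (or a transcription slip from the cited source); you should prove the inequality instead.
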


Using \Cref{ak-thm} and \Cref{iv-lem}, Kumar introduced an innovative inductive technique that allowed for significant progress on \Cref{reg-que} for any $m$: 

\begin{theorem} \label{up-vert} \cite[Theorems 3.6 \& 3.7]{Kumar-reg}
Let $G \in \GG$. Then, for any $m\ge 2$, $$\reg \left(S/J_{K_m,G}\right) \le n(G)-c(G).$$\footnote{The original statements by Kumar \cite[Theorem 3.6]{Kumar-reg}, was given for connected graphs. One can derive the present version as consequence of \Cref{tensor-res}.}  Moreover, if $m$ is at least the maximum number of vertices in a component of $G$, then $$\reg \left(S/J_{K_m,G}\right) = n(G)-c(G).$$ 
\end{theorem}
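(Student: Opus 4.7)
My plan is to treat both assertions by a simultaneous induction on the number of internal vertices $\iv(G)$, using the short exact sequence (\ref{ses}) from \Cref{ak-thm}, combined with \Cref{reg-lemma} and \Cref{iv-lem}. Before starting the induction, I would reduce to the connected case: for a disconnected $G$ with components $G_1, \ldots, G_{c(G)}$, the ideal $J_{K_m,G}$ is a sum over pairwise disjoint sets of variables, so iterating \Cref{tensor-res} gives $\reg(S/J_{K_m,G}) = \sum_{i=1}^{c(G)} \reg(S_i/J_{K_m,G_i})$, which matches the decomposition $n(G) - c(G) = \sum_i (n(G_i) - 1)$. Hence both the bound and the equality claim reduce to the case where $G$ is connected, with the hypothesis ``$m$ at least the number of vertices of the largest component'' becoming simply $m \ge n(G)$.

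For the base case, $\iv(G) = 0$: every vertex of the connected graph $G$ lies in at most one maximal clique, which forces $G = K_{n(G)}$. Then $J_{K_m,G}$ is the ideal of $2 \times 2$ minors of a generic $m \times n(G)$ matrix, for which classical results on determinantal ideals give $\reg(S/J_{K_m,K_{n(G)}}) = \min(m,n(G)) - 1$. This is at most $n(G) - 1$ always, and equals $n(G) - 1$ precisely when $m \ge n(G)$, so both halves of the theorem hold at the base.

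For the inductive step I would pick any internal vertex $v$ and consider the short exact sequence
\[
0 \to \frac{S}{J_{K_m,G}} \to \frac{S}{J_{K_m,G_v}} \oplus \frac{S}{P_v + J_{K_m,G \setminus v}} \to \frac{S}{P_v + J_{K_m,G_v \setminus v}} \to 0.
\]
Quotienting by $P_v$ eliminates the $m$ variables indexed by $v$, so \Cref{tensor-res} shows that the second and third modules have regularity equal to that of $\bar{S}/J_{K_m,G \setminus v}$ and $\bar{S}/J_{K_m,G_v \setminus v}$ in the polynomial ring $\bar{S}$ on $m(n(G)-1)$ variables. By \Cref{iv-lem}, each of $G_v$, $G \setminus v$, and $G_v \setminus v$ has strictly fewer internal vertices than $G$, so the inductive hypothesis applies to all three. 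Since $G_v$ is connected whenever $G$ is, the first term has regularity at most $n(G) - 1$; the other two give bounds $n(G) - 1 - c(G \setminus v) \le n(G) - 2$ and $n(G) - 1 - c(G_v \setminus v) \le n(G) - 2$. Invoking \Cref{reg-lemma} then yields $\reg(S/J_{K_m,G}) \le \max\{n(G)-1,\, (n(G)-2)+1\} = n(G) - 1$.

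For the equality when $m \ge n(G)$, the same inductive step works: by induction applied to $G_v$ (connected, on $n(G)$ vertices, with strictly fewer internal vertices), $\reg(S/J_{K_m,G_v}) = n(G) - 1$, so the middle module has regularity exactly $n(G) - 1$, which is strictly larger than that of the right-hand module. \Cref{reg-lemma}(1) then forces $\reg(S/J_{K_m,G}) = n(G) - 1$. I expect the main obstacle to lie not in the induction itself, which is nearly automatic given \Cref{ak-thm}, \Cref{iv-lem}, and \Cref{reg-lemma}, but in the base case, which requires importing a nontrivial computation from the theory of determinantal ideals. A secondary concern is keeping careful track of the strict inequality between the regularities of the middle and right-hand terms so that part (1) of \Cref{reg-lemma}, rather than the weaker part (3), can be applied at every stage of the equality half of the induction.
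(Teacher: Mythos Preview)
Your proposal is correct and matches the paper's approach: the upper bound is recovered in the paper by showing that $\psi(G)=n(G)-c(G)$ is $(\reg,m)$-compatible (\Cref{comp-MM}) and invoking \Cref{ubT}, whose proof is exactly your induction on $\iv(G)$ via the short exact sequence~\eqref{ses} and \Cref{reg-lemma}. The equality half is not reproved in this paper but only cited from \cite{Kumar-reg}; your argument for it, using the inductive equality $\reg(S/J_{K_m,G_v})=n(G)-1$ to force the strict inequality needed for \Cref{reg-lemma}(1), is the natural one.
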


\Cref{up-vert} almost answers affirmatively \Cref{reg-que}, since for $m \ge \sqrt{\frac{2n(G)-2c(G)}{\mathcal{C}(G)}+\frac{1}{4}} +\frac{1}{2}$, we have  $n(G)-c(G) \le \min \left\{ { m \choose 2} \mathcal{C}(G), ~e(G)\right\}.$  In the case of chordal graphs, Kumar in \cite{Kumar-reg} not only resolved the \Cref{reg-que} but also obtained a better upper bound for the regularity of generalized binomial edge ideals of these graphs:%. Here, we present that result:

\begin{theorem}\label{reg-up-chordal-m} \cite[Theorem 3.13]{Kumar-reg}
    Let $G \in \GG$ be a chordal graph. Then, $$\reg \left(S/J_{K_m,G}\right) \le \min\{(m-1)\mathcal{C}(G), n(G)-c(G)\}.$$
\end{theorem}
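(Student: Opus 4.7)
The plan is to prove the two bounds in the minimum separately. Since $\reg(S/J_{K_m,G}) \le n(G)-c(G)$ is given by \Cref{up-vert}, only the bound $\reg(S/J_{K_m,G}) \le (m-1)\mathcal{C}(G)$ requires work, and I would prove it by induction on $\iv(G)$. In the base case $\iv(G)=0$, chordality together with the absence of internal vertices forces every component of $G$ to be a complete graph, so $G = \bigsqcup_{i=1}^{c(G)} K_{n_i}$. By \Cref{tensor-res}, regularity is then additive over components, and since $J_{K_m,K_{n_i}}$ is the ideal of $2\times 2$ minors of a generic $m\times n_i$ matrix, the classical (Eagon-Northcott-type) computation gives $\reg(S_i/J_{K_m,K_{n_i}}) = \min\{m-1,\,n_i-1\}$, whence $\reg(S/J_{K_m,G}) \le (m-1)\,c(G) = (m-1)\,\mathcal{C}(G)$, using $\mathcal{C}(G)=c(G)$ in this case.

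For the inductive step, assume $\iv(G)\ge 1$ and fix an internal vertex $v$. Applying \Cref{reg-lemma} to the short exact sequence \Cref{ses} derived from \Cref{ak-thm}, and using \Cref{tensor-res} to see that $\reg(S/(P_v + J_{K_m, H})) = \reg(S'/J_{K_m, H})$ for every subgraph $H$ of $G \setminus v$ (where $S'$ is obtained from $S$ by deleting the variables $x_{i,v}$), I would arrive at
\[ \reg(S/J_{K_m,G}) \le \max\bigl\{\reg(S/J_{K_m,G_v}),\ \reg(S'/J_{K_m,G\setminus v}),\ \reg(S'/J_{K_m,G_v\setminus v})+1\bigr\}. \]
Before applying the inductive hypothesis, I must verify that each of $G_v$, $G\setminus v$, and $G_v\setminus v$ is chordal with strictly smaller $\iv$. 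The drop of $\iv$ is precisely \Cref{iv-lem}, while chordality of $G\setminus v$ and $G_v\setminus v$ follows from heredity. The subtler point is chordality of $G_v$: any induced cycle $C$ of length $k \ge 4$ in $G_v$ must use a newly added edge, and a short case analysis shows that $v\notin V(C)$ and $|V(C)\cap N_G(v)| = 2$ (the alternatives each force a chord inside $C$); then replacing the unique new edge of $C$ by the length-two path through $v$ yields an induced cycle of length $k+1 \ge 5$ in $G$, contradicting the chordality of $G$.

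The heart of the argument, and what I expect to be the main obstacle, is the three combinatorial inequalities
\[ \mathcal{C}(G\setminus v) \le \mathcal{C}(G), \qquad \mathcal{C}(G_v)\le \mathcal{C}(G)-1, \qquad \mathcal{C}(G_v\setminus v)\le \mathcal{C}(G)-1. \]
The first holds for any graph: every maximal clique of $G\setminus v$ is either a maximal clique of $G$ not containing $v$, or of the form $C\setminus\{v\}$ for some maximal clique $C$ of $G$ with $v\in C$, giving a surjection from the set of maximal cliques of $G$ onto that of $G\setminus v$. For the second, the crucial input from chordality is that if $b_1,b_2\in N_G(v)$ are non-adjacent in $G$ they cannot have a common neighbor outside $N_G(v)\cup\{v\}$ (else $v,b_1,a,b_2$ would be an induced 4-cycle). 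Consequently, for any maximal clique $D$ of $G_v$ with $v\notin D$, the set $D\cap N_G(v)$ is already a clique in $G$, so $D$ is a clique of $G$; the maximal clique $E$ of $G$ containing $D$ cannot contain $v$ (else $E\subseteq \{v\}\cup N_G(v)$, forcing $D$ to be non-maximal in $G_v$), and since $E$ is also a clique in $G_v$, maximality of $D$ gives $D=E$. Thus the maximal cliques of $G_v$ avoiding $v$ are exactly the maximal cliques of $G$ avoiding $v$; since $v$ is internal it lies in at least two maximal cliques of $G$, all of which collapse into the single simplicial clique $\{v\}\cup N_G(v)$ of $G_v$, giving $\mathcal{C}(G_v)\le \mathcal{C}(G)-1$. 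The third inequality is then the first applied to $G_v$. Substituting these estimates into the displayed inequality, together with the inductive hypothesis, yields $\reg(S/J_{K_m,G})\le (m-1)\mathcal{C}(G)$, closing the induction.
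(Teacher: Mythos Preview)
Your proof is correct. It follows the same inductive skeleton as the paper --- induction on $\iv(G)$ via the short exact sequence \eqref{ses} and \Cref{reg-lemma} --- but the two diverge in the invariant that drives the induction. The paper does not argue with $\mathcal{C}(G)$ directly; instead it recovers \Cref{reg-up-chordal-m} as a special case of \Cref{cor-eta-m}, by showing that $\psi(G)=(m-1)\eta(G)$ is a $(\reg,m)$-compatible map (\Cref{comp-eta}), applying the abstract \Cref{ubT} to obtain $\reg(S/J_{K_m,G})\le(m-1)\eta(G)$ for \emph{every} graph, and then invoking $\eta(G)\le\mathcal{C}(G)$. Chordality plays no role in that argument. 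Your route, by contrast, must remain inside the class of chordal graphs throughout the induction --- hence the extra work of verifying that $G_v$ is again chordal and of proving $\mathcal{C}(G_v)\le\mathcal{C}(G)-1$ via the ``no induced $C_4$'' argument. What you gain is a self-contained proof that does not import the $\eta$-inequalities from \cite{RSK2}; what the paper's approach gains is a bound that is both sharper ($\eta$ in place of $\mathcal{C}$) and valid for arbitrary graphs, thereby settling \Cref{reg-up-conj-m} in full rather than only in the chordal case.
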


Kumar proposed a conjecture \cite[Conjecture 3.15]{Kumar-reg} that enhances the upper bound proposed in \Cref{reg-que}. The conjecture is stated as follows: 

\begin{conjecture}\label{reg-up-conj-m}\cite[Conjecture 3.15]{Kumar-reg}
   Let $G \in \GG$. Then, $$\reg \left(S/J_{K_m,G}\right) \le \min\{(m-1)\mathcal{C}(G), n(G)-c(G)\}.$$ 
\end{conjecture}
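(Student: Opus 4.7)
The plan is to focus on the complementary bound $\reg(S/J_{K_m,G}) \le (m-1)\mathcal{C}(G)$, since the other half $n(G)-c(G)$ is already \Cref{up-vert}. I would follow the inductive architecture Kumar uses for \Cref{reg-up-chordal-m} but replace the chordality-specific inputs with general combinatorial estimates. Using \Cref{tensor-res} together with the additivity of $\mathcal{C}$ over connected components, I would first reduce to the case of a connected graph. The induction is then on $\iv(G)$, with \Cref{iv-lem} ensuring the strict descent.

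For the base case $\iv(G)=0$ with $G$ connected, one shows $G$ must be a single complete graph $K_n$: any edge lies in some maximal clique $C$, and if any other maximal clique met $C$, the meeting vertex would be internal. Then $\mathcal{C}(G)=1$, and $J_{K_m,K_n}$ is the ideal of $2\times 2$ minors of a generic $m\times n$ matrix, whose classical regularity is $\min(m,n)-1\le m-1$. For the inductive step, pick an internal vertex $v$ and apply \Cref{ak-thm} to produce the short exact sequence \eqref{ses}. Because $P_v$ is a linear ideal in variables disjoint from the remaining generators, quotienting by it preserves regularity, so \Cref{reg-lemma} combined with the inductive hypothesis (applicable via \Cref{iv-lem}) yields
\[
\reg(S/J_{K_m,G}) \le \max\bigl\{(m-1)\mathcal{C}(G_v),\ (m-1)\mathcal{C}(G\setminus v),\ (m-1)\mathcal{C}(G_v\setminus v)+1\bigr\}.
\]
Since $m\ge 2$, the induction closes as soon as one proves, for every internal vertex $v$, the three inequalities $\mathcal{C}(G_v)\le \mathcal{C}(G)$, $\mathcal{C}(G\setminus v)\le \mathcal{C}(G)$, and $\mathcal{C}(G_v\setminus v)\le \mathcal{C}(G)-1$.

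The main obstacle will be this purely combinatorial lemma. The inequality $\mathcal{C}(G\setminus v)\le \mathcal{C}(G)$ should follow from a direct injection that sends a maximal clique of $G\setminus v$ to itself when it is already maximal in $G$, and otherwise to a maximal clique of $G$ containing it together with $v$. For the other two I would aim at the sharper estimate $\mathcal{C}(G_v)\le \mathcal{C}(G)-k+1$, where $k\ge 2$ is the number of maximal cliques of $G$ through $v$; combined with the easy observation $\mathcal{C}(G_v\setminus v)\le \mathcal{C}(G_v)$ this delivers both remaining inequalities at once. The natural approach is to build an injection from the maximal cliques $D$ of $G_v$ with $v\notin D$ into the maximal cliques of $G$ not containing $v$. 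Writing $D=D_0\sqcup D_1$ with $D_0 = D\setminus N_G(v)$ and $D_1 = D\cap N_G(v)$, the maximality of $D$ in $G_v$ forces $D_0\ne\emptyset$ and each vertex of $D_1$ to be $G$-adjacent to every vertex of $D_0$. The hard part is that the new edges within $N_G(v)$ can link vertices of $D_1$ which are not pairwise $G$-adjacent, so $D$ need not lie inside any single maximal clique of $G$, and the target of the injection must be defined with enough structural bookkeeping around the $k$ merging cliques at $v$ to remain well defined and injective. If the direct construction proves too rigid, a reasonable fallback is to strengthen the inductive hypothesis by replacing $\mathcal{C}$ with the finer invariant introduced in \Cref{sec:ub-specific}, provided it behaves monotonically under the operation $G\mapsto G_v$.
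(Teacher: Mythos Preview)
Your inductive architecture is exactly right, and the reduction to the three combinatorial inequalities is the correct distillation of what the short exact sequence \eqref{ses} demands. The gap is that the central inequality $\mathcal{C}(G_v)\le \mathcal{C}(G)$ is simply false in general, so the injection you are trying to construct cannot exist. Take $V(G)=\{v,a,b,u_a,u_b,c,d\}$ with edge set
\[
\{va,\ vb,\ au_a,\ ac,\ ad,\ bu_b,\ bc,\ bd,\ u_ac,\ u_ad,\ u_bc,\ u_bd\}
\]
and no other edges. Then $v$ is internal with $N_G(v)=\{a,b\}$, and one checks $\mathcal{C}(G)=6$ while $\mathcal{C}(G_v)=7$: the single new edge $ab$ creates two new maximal triangles $\{a,b,c\}$ and $\{a,b,d\}$ without destroying any of the four old triangles $\{a,u_a,c\},\{a,u_a,d\},\{b,u_b,c\},\{b,u_b,d\}$, while the two edges $\{v,a\},\{v,b\}$ merge into the single clique $\{v,a,b\}$. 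Your sharper target $\mathcal{C}(G_v)\le \mathcal{C}(G)-k+1$ therefore fails as well, and the phenomenon scales: replacing $\{c,d\}$ by the complement of a perfect matching on $2k$ vertices yields $\mathcal{C}(G_v)-\mathcal{C}(G)=2^k-1$. This is precisely the obstruction that kept earlier work on \Cref{reg-que} confined to chordal graphs.

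The paper does not attempt to control $\mathcal{C}$ directly. It proves the stronger bound $\reg(S/J_{K_m,G})\le (m-1)\eta(G)$ by showing that $\psi(G)=(m-1)\eta(G)$ is a $(\reg,m)$-compatible map (\Cref{comp-eta}), and then invokes \Cref{ubT}. The needed combinatorial input, imported from \cite{RSK2}, is that $\eta(G_v)<\eta(G)$ and $\eta(G\setminus v)\le\eta(G)$ hold for \emph{every} internal vertex $v$; the conjectured bound on $\mathcal{C}$ then drops out from the trivial inequality $\eta(G)\le\mathcal{C}(G)$. Your fallback of ``replacing $\mathcal{C}$ with a finer invariant that behaves monotonically under $G\mapsto G_v$'' is thus exactly the right instinct, but the invariant you need is $\eta$ from \Cref{sec:upper-bound}, not anything introduced in \Cref{sec:ub-specific}, and until that substitution is made the proposal does not contain a proof.
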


In 2021, Rouzbahani Malayeri, Saeedi Madani, and Kiani proved the conjecture specifically for the case when \( m = 2 \). Their work not only validated the conjecture but also established an improved version of \Cref{reg-que}. They introduced a new combinatorial invariant that, to our knowledge, is not currently found in the graph theory literature. This new invariant is defined as follows:

A set of edges \( \mathcal{E} \subseteq E(G) \) is called a {\it clique-disjoint set} of a graph \( G \) if no two edges in \( \mathcal{E} \) belong to a clique of $G$. Define
\[
\eta(G) := \max\{ |\mathcal{E}|~:~ \mathcal{E} \text{ is a clique-disjoint set of } G\}.
\] 

\begin{figure}[H]
\begin{tikzpicture}
\draw (2,2)-- (0,0);
\draw (2,2)-- (4,0);
\draw (0,0)-- (4,0);
\draw (1,1)-- (3,1);
\draw (3,1)-- (2,0);
\draw (2,0)-- (1,1);
\begin{scriptsize}
\fill [color=black] (2,2) circle (1.5pt);
%\draw[color=black] (2.12,2.2) node {$A$};
\fill [color=black] (0,0) circle (1.5pt);
%\draw[color=black] (0.12,0.19) node {$B$};
\fill [color=black] (4,0) circle (1.5pt);
%\draw[color=black] (4.13,0.19) node {$C$};
\fill [color=black] (1,1) circle (1.5pt);
%\draw[color=black] (1.12,1.19) node {$D$};
\fill [color=black] (3,1) circle (1.5pt);
%\draw[color=black] (3.13,1.19) node {$E$};
\fill [color=black] (2,0) circle (1.5pt);
%\draw[color=black] (2.1,0.19) node {$F$};
\end{scriptsize}
\end{tikzpicture}    
\caption{Graph $H$}
\label{fig:eta-clique-no}
\end{figure}

While it may look like the set $\mathcal{E}$ consists of an edge each from a maximal clique and hence the invariant $\eta(G)$ being equal to $\mathcal{C}(G)$, the invariant $\eta(-)$ can be strictly smaller than $\mathcal{C}(-)$. For example, in the graph $H$ given in \Cref{fig:eta-clique-no}, $\eta(H) =  3 < \mathcal{C}(G) = 4$.

\begin{theorem}\label{reg-up-RSK-1}\cite[Corollary 2.7]{RSK2}
Let $G \in \GG$. Then, $\reg \left(S/J_{G}\right) \le \eta(G).$
\end{theorem}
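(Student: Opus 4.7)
The plan is to prove this by induction on the number of internal vertices $\iv(G)$, adapting Kumar's inductive strategy from \cite{Kumar-reg} but using the invariant $\eta(G)$ in place of $\mathcal{C}(G)$. For the base case $\iv(G) = 0$, every vertex of $G$ lies in at most one maximal clique, so $G$ is a disjoint union of complete graphs and isolated vertices. If $c$ denotes the number of non-trivial components, then \Cref{tensor-res} together with $\reg(S/J_{K_n}) = 1$ gives $\reg(S/J_G) = c$; meanwhile $\eta(G) = c$, since picking one edge from each complete component is a maximum clique-disjoint set.

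For the inductive step, pick an internal vertex $v$ and apply Ohtani's lemma (\Cref{oh-lem}) to obtain the short exact sequence
\begin{align*}
0 \to \frac{S}{J_G} \to \frac{S}{J_{G_v}} \oplus \frac{S}{P_v + J_{G \setminus v}} \to \frac{S}{P_v + J_{G_v \setminus v}} \to 0,
\end{align*}
with $P_v = (x_v, y_v)$. Adjoining $P_v$ does not alter the regularity of the relevant quotients since $x_v, y_v$ do not occur in $J_{G \setminus v}$ or $J_{G_v \setminus v}$. By \Cref{iv-lem}, the graphs $G_v$, $G \setminus v$, and $G_v \setminus v$ all have strictly fewer internal vertices than $G$, so the induction hypothesis yields $\reg(S/J_{G_v}) \le \eta(G_v)$, $\reg(S/(P_v + J_{G\setminus v})) \le \eta(G \setminus v)$, and $\reg(S/(P_v + J_{G_v \setminus v})) \le \eta(G_v \setminus v)$. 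By \Cref{reg-lemma}, the bound $\reg(S/J_G) \le \eta(G)$ then reduces to three combinatorial inequalities: (a) $\eta(G_v) \le \eta(G)$, (b) $\eta(G \setminus v) \le \eta(G)$, and (c) $\eta(G_v \setminus v) + 1 \le \eta(G)$.

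Inequality (b) is immediate: if two edges in a clique-disjoint set of $G \setminus v$ were to share a clique $C$ of $G$, then $C \setminus \{v\}$ would still be a clique of $G \setminus v$ containing both. For (a), note that $\{v\} \cup N_G(v)$ is a clique of $G_v$, so any clique-disjoint set of $G_v$ contains at most one edge inside it; if that edge lies in $E(G_v) \setminus E(G)$, it can be swapped for an incident edge $\{v, w\}$ of $G$ without losing clique-disjointness, yielding a clique-disjoint set of $G$ of the same size.

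The heart of the argument — and its main obstacle — is (c). Start with a maximum clique-disjoint set $\mathcal{E}$ of $G_v \setminus v$. Since $N_G(v)$ is a clique in $G_v \setminus v$, at most one edge $e^* \in \mathcal{E}$ has both endpoints in $N_G(v)$. If no such $e^*$ exists, then $\mathcal{E} \cup \{\{v, u\}\}$ for any $u \in N_G(v)$ gives a clique-disjoint set of $G$ of size $|\mathcal{E}|+1$. If $e^* = \{u_1, u_2\} \in E(G_v) \setminus E(G)$, the two $G$-edges $\{v, u_1\}$ and $\{v, u_2\}$ share no clique of $G$ because $\{u_1,u_2\} \notin E(G)$, so $(\mathcal{E} \setminus \{e^*\}) \cup \{\{v,u_1\},\{v,u_2\}\}$ works. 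The most delicate subcase is $e^* \in E(G)$: here $\{v,u_1,u_2\}$ is a triangle in $G$, and the naive extension by a single $v$-edge may create a common clique with $e^*$. The key observation is that $v$ being internal forces $N_G(v)$ not to be a clique in $G$ (else $\{v\} \cup N_G(v)$ would be the unique maximal clique containing $v$), so there exist non-adjacent $w, u \in N_G(v)$; then $(\mathcal{E} \setminus \{e^*\}) \cup \{\{v, w\}, \{v, u\}\}$ is clique-disjoint in $G$ of size $|\mathcal{E}|+1$, because $\{w,u\} \notin E(G)$ prevents $\{v,w\}$ and $\{v,u\}$ from sharing a clique, and the remaining edges of $\mathcal{E} \setminus \{e^*\}$, having at most one endpoint in $N_G(v)$, cannot share a clique with either of the new $v$-edges in $G$.
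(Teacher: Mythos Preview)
Your proof is correct and follows essentially the same inductive strategy as the paper (and the original source \cite{RSK2}): induct on $\iv(G)$, invoke Ohtani's decomposition and the regularity lemma on the resulting short exact sequence, and reduce to combinatorial inequalities on $\eta$. The only noteworthy difference is in \emph{which} combinatorial inequalities are established. The paper (via \cite[Theorem~2.6]{RSK2}) uses the \emph{strict} inequality $\eta(G_v) < \eta(G)$ together with $\eta(G\setminus v) \le \eta(G)$; from these, your inequality~(c) follows immediately since $\eta(G_v\setminus v) \le \eta(G_v) < \eta(G)$. You instead prove the weaker $\eta(G_v) \le \eta(G)$ and establish~(c) directly via your three-case analysis, the key observation in Case~3 being that internality of $v$ forces two non-adjacent neighbours, giving two clique-disjoint $v$-edges. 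Both routes fit the compatible-map framework of \Cref{def-com-RSM2}/\Cref{def-com}; yours corresponds to the second alternative in part~(c) of \Cref{def-com}, while the paper's corresponds to the first. Your write-up has the advantage of being self-contained, giving full combinatorial arguments rather than citing them from \cite{RSK2}.
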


Since for any \( G \in \mathcal{G} \), \( \eta(G) \leq \mathcal{C}(G) \), this result resolves \Cref{reg-que} in the case when \( m = 2 \). The approach taken by Rouzbahani Malayeri, Saeedi Madani, and Kiani to prove \Cref{reg-up-RSK-1} is particularly intriguing. They utilized an inductive method grounded in \Cref{oh-lem} and \Cref{iv-lem}, introducing the concept of a compatible map. Here, we revisit the concept of a compatible map as discussed in \cite{RSK2}.

For a graph \( G \), let \( \widehat{G} \) represent the induced subgraph obtained by removing all isolated vertices from \( G \). If \( G \) contains no isolated vertex, then \( \widehat{G} = G \).

\begin{definition}\label{def-com-RSM2} \cite[Definition 2.1]{RSK2}
%Let $\mathcal{G}$ be the set of all finite graphs. 
A map $\psi: \mathcal{G} \to \mathbb{N}$ is said to be compatible if it satisfies the following conditions: 
\begin{enumerate}
\item[(a)] $\psi(\widehat{G}) \le \psi(G)$ for all $G\in \GG$;
\item[(b)] if $G= \sqcup_{i=1}^{c(G)} K_{n_i}$ with $n_i \ge 2$ for every $1 \le i \le c(G)$, then $\psi(G) \ge c(G)$;
\item[(c)]  for a graph $G \in \GG$, if $G \neq  \sqcup_{i=1}^{c(G)} K_{n_i}$, then there exists a vertex $v \in V(G)$ so that  $$ \psi(G\setminus v) \le \psi (G) \text{ and } \psi(G_v) < \psi(G).$$  
\end{enumerate}
\end{definition}

Building on the indicative techniques from \cite{Kumar-reg, AR3}, Rouzbahani Malayeri, Saeedi Madani, and Kiani successfully established the following result: 

\begin{theorem}\label{reg-up-RSK} \cite[Theorem 2.4]{RSK2}
    Let $\psi~:~ \GG \to \NN$ be a compatible map. Then, $\reg \left(S/J_{G}\right) \le \psi(G).$
\end{theorem}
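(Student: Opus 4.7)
The plan is to proceed by induction on the number of internal vertices $\iv(G)$, combining \Cref{oh-lem} (the $m=2$ case of \Cref{ak-thm}) with the regularity lemma \Cref{reg-lemma} applied to the short exact sequence (\ref{ses}). The combinatorial input \Cref{iv-lem} ensures every graph appearing in the inductive step has strictly fewer internal vertices than $G$, while the compatibility axioms (a)--(c) on $\psi$ are tailored to supply exactly the bounds needed at each recursive step.

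For the base case $\iv(G) = 0$, every vertex of $G$ lies in at most one maximal clique, so $\widehat{G}$ decomposes as $\sqcup_{i=1}^{c(\widehat{G})} K_{n_i}$ with each $n_i \ge 2$. Since isolated vertices contribute no generators to $J_G$, \Cref{tensor-res} combined with the standard equality $\reg(S/J_{K_n}) = 1$ for $n \ge 2$ yields $\reg(S/J_G) = c(\widehat{G})$. Axioms (a) and (b) then give $\psi(G) \ge \psi(\widehat{G}) \ge c(\widehat{G})$, as required.

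For the inductive step with $\iv(G) \ge 1$, the graph $G$ cannot be a disjoint union of cliques of size at least two, since an internal vertex forces two maximal cliques to share a vertex. Condition (c) therefore supplies a vertex $v$ with $\psi(G \setminus v) \le \psi(G)$ and $\psi(G_v) < \psi(G)$; this $v$ must be internal, for otherwise $G_v = G$ and the strict inequality would fail. Feeding (\ref{ses}) into \Cref{reg-lemma} and using \Cref{tensor-res} to identify $\reg(S/(P_v + J_H))$ with $\reg(S'/J_H)$ for any $H$ on $V(G) \setminus \{v\}$ yields
\begin{equation*}
\reg(S/J_G) \le \max\bigl(\reg(S/J_{G_v}),\ \reg(S'/J_{G \setminus v}),\ \reg(S'/J_{G_v \setminus v}) + 1\bigr).
\end{equation*}
By the induction hypothesis (applicable by \Cref{iv-lem}) together with (c), the first term is at most $\psi(G_v) \le \psi(G) - 1$ and the second is at most $\psi(G \setminus v) \le \psi(G)$.

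The main obstacle is the third term, where the induction hypothesis only supplies $\reg(S'/J_{G_v \setminus v}) \le \psi(G_v \setminus v)$, and axioms (a)--(c) do not directly compare $\psi(G_v \setminus v)$ with $\psi(G) - 1$. The key observation to overcome this is that $v$ is a free vertex of $G_v$, so $G_v \setminus v$ arises from $G_v$ by deleting a simplicial vertex. The plan is to establish the auxiliary inequality $\psi(G_v \setminus v) \le \psi(G_v)$ by a nested induction on $\iv(G_v)$: the base case reduces to checking what happens when a free vertex is removed from a disjoint union of cliques (using (a) and (b), and noting that deleting $v$ from $K_n$ leaves either $K_{n-1}$ or an isolated vertex according as $n\ge 3$ or $n=2$); the inductive step applies (c) to $G_v$ to extract a further internal vertex $w \ne v$ and tracks the removal of $v$ through the corresponding recursive short exact sequence. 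Combining $\psi(G_v \setminus v) \le \psi(G_v)$ with $\psi(G_v) \le \psi(G) - 1$ bounds the third term by $\psi(G)$ and closes the induction.
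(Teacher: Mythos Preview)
Your overall architecture---induction on $\iv(G)$ via the Ohtani short exact sequence---matches the standard approach, and the base case and the bounds on the first two terms of the $\max$ are fine. The gap is in your handling of the third term $\reg(S'/J_{G_v\setminus v})+1$.

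The auxiliary inequality $\psi(G_v\setminus v)\le\psi(G_v)$ that you propose to prove by nested induction cannot be extracted from axioms (a)--(c). In your base case $\iv(G_v)=0$, the graph $G_v$ is a disjoint union of cliques and you appeal to (a) and (b); but (b) is only a \emph{lower} bound $\psi(\sqcup K_{n_i})\ge c$, and (a) goes in the wrong direction, so nothing prevents, say, $\psi(K_3)>\psi(K_4)$. (Any attempt to rule this out via (c) fails: (c) only constrains $\psi$ on graphs that are not unions of cliques, and the constraints it produces propagate upward to graphs with more internal vertices, never forcing a comparison between two complete graphs.) Your inductive step is also problematic: applying (c) to $H=G_v$ produces some internal $w$, but if $v$ is adjacent to $w$ then $v$ need not remain free in $H_w$, so you cannot recurse.

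The clean fix---and this is what the original argument in \cite{RSK2} does---is to abandon the auxiliary inequality on $\psi$ entirely and instead bound the regularity directly. Since $G_v\setminus v$ is an induced subgraph of $G_v$, Matsuda--Murai's Betti-number inequality (the result underlying \Cref{lowerbound:length}) gives $\reg(S'/J_{G_v\setminus v})\le\reg(S/J_{G_v})$. Combined with your inductive bound $\reg(S/J_{G_v})\le\psi(G_v)\le\psi(G)-1$, this yields $\reg(S'/J_{G_v\setminus v})+1\le\psi(G)$ immediately, with no nested induction needed. Note that the paper's own proof of the more general \Cref{ubT} sidesteps the whole issue by building the bound $\psi(G_v\setminus v)<\psi(G)$ directly into part~(c) of the $(\reg,m)$-compatible axioms; that option is not available to you here.
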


As Kumar noted in \cite{Kumar-reg}, the regularity of generalized binomial edge ideals depends on the choice of \( m \). For instance, as demonstrated in \Cref{up-vert}, if \( m \) is at least as large as the maximum number of vertices in any component of \( G \), then \( \reg(S/J_{K_m,G}) = n(G) - c(G) \). Additionally, as discussed in \cite[Proposition 3.3]{Kumar-reg}, \( \reg(S/J_{K_m,K_n}) = \min\{m-1,n-1\} \) and \cite[Theorem 3.12]{Kumar-reg}, \( \reg(S/J_{K_m,G}) = (m-1)\mathcal{C}(G) \) for some block graph $G$, therefore, it is important that the compatible map is dependent on \( m \). 

To extend the concept of a compatible map to the case where one can apply it in a more general situation, we propose the following notion of \((\reg,m)\)-compatible map:

\begin{definition}\label{def-com}
%Let $\mathcal{G}$ be the set of all finite graphs, and l
Let $m \ge 2$ be a positive integer. A map $\psi: \mathcal{G} \to \mathbb{N}$ is said to be $(\reg,m)$-compatible if it satisfies the following conditions: 
\begin{enumerate}
\item[(a)] $\psi(\widehat{G}) \le \psi(G)$ for all $G\in \GG$;
\item[(b)] if $G= \sqcup_{i=1}^{c(G)} K_{n_i}$ with $n_i \ge 2$ for every $1 \le i \le c(G)$, then $$\psi(G) \ge \sum_{i=1}^{c(G)}\min\{m-1, n_i-1\};$$
\item[(c)]  for a graph $G \in \GG$, if $G \neq  \sqcup_{i=1}^{c(G)} K_{n_i}$, then there exists an internal vertex $v \in V(G)$ so that either
$$ \psi(G\setminus v) \le \psi (G) \text{ and } \psi(G_v) < \psi(G),$$ or  
$$ \psi(G\setminus v) \le \psi (G), ~ \psi(G_v) = \psi (G) \text{ and } \psi(G_v \setminus v) < \psi(G).$$ 
\end{enumerate}
\end{definition}

We begin by demonstrating the existence of $(\reg,m)$-compatible maps. It is important to note that we have refined part (c) of \Cref{def-com-RSM2}. Furthermore, according to \Cref{def-com-RSM2}, the following map does not qualify as a compatible map. Therefore, $(\reg,m)$-compatible maps not only generalize \Cref{def-com} but also encompass many more maps beyond just compatible ones. For a graph $G$, let $\text{is}(G)$ denote the number of isolated vertices of $G$.

\begin{proposition}\label{comp-MM}
Let $\psi~:~\GG \to \NN$ be defined by $\psi(G)=n(G)-c(G)$.% for every $G \in \GG.$ 
Then, $\psi$ is a $(\reg,m)$-compatible map.
\end{proposition}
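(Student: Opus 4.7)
The plan is to check the three axioms of \Cref{def-com} individually for $\psi(G) := n(G) - c(G)$. Axioms (a) and (b) are immediate: every isolated vertex of $G$ contributes $1$ both to $n(G)$ and to $c(G)$, so removing them leaves $n-c$ unchanged and $\psi(\widehat G) = \psi(G)$; and if $G = \sqcup_{i=1}^{c(G)} K_{n_i}$ with each $n_i \ge 2$, then $\psi(G) = \sum_i(n_i - 1) \ge \sum_i \min\{m-1, n_i - 1\}$ by termwise comparison.

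The substantive content lies in axiom (c). I would first invoke (a) to reduce to the case $G = \widehat G$, so that $G$ has no isolated vertices; then, since $G$ is not a disjoint union of $K_{n_i}$ with $n_i \ge 2$, at least one component $H$ of $G$ fails to be a clique. To produce an internal vertex inside $H$, take two non-adjacent vertices $u, w$ of $H$ and a shortest $u$--$w$ path $u = u_0, u_1, \ldots, u_k = w$; by minimality of the path, $\{u_0, u_2\} \notin E(G)$, so $u_1$ belongs to at least two distinct maximal cliques of $G$ (those containing $\{u_0, u_1\}$ and $\{u_1, u_2\}$ respectively) and is thus an internal vertex. Set $v := u_1$.

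It remains to verify the inequalities in (c) at this $v$. The construction $G \mapsto G_v$ only adds edges among elements of $N_G(v)$ and leaves $V(G)$ fixed, so $n(G_v) = n(G)$ and $c(G_v) = c(G)$; hence $\psi(G_v) = \psi(G)$, which means the first branch of (c) is unavailable and we are directed to the second. For the remaining inequalities, I would observe that deleting a non-isolated vertex from a graph never lowers the number of connected components, so $c(G \setminus v) \ge c(G)$ and $c(G_v \setminus v) \ge c(G_v) = c(G)$; therefore
\[
\psi(G \setminus v) \le n(G) - 1 - c(G) < \psi(G) \quad \text{and} \quad \psi(G_v \setminus v) \le n(G) - 1 - c(G) < \psi(G),
\]
completing the verification of the second branch.

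The only conceptual hurdle is the observation just made: because $\psi = n - c$ is invariant under $G \mapsto G_v$, the first branch of (c) is never applicable, and the argument relies entirely on the second branch that was added in \Cref{def-com}. This is precisely the point flagged before the proposition --- the map $\psi = n - c$ qualifies as $(\reg, m)$-compatible but fails to qualify as compatible in the sense of \Cref{def-com-RSM2}.
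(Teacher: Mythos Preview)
Your argument is correct and follows essentially the same route as the paper's proof: both verify (a) and (b) by direct counting, observe that $\psi(G_v)=\psi(G)$ forces the second branch of (c), and then use $n(G\setminus v)=n(G_v\setminus v)=n(G)-1$ together with $c(G\setminus v),\,c(G_v\setminus v)\ge c(G)$ to obtain the required strict inequalities. The only cosmetic differences are that you explicitly construct an internal vertex via a shortest-path argument (the paper simply takes any internal vertex), and the paper records the sharper equality $c(G_v\setminus v)=c(G_v)=c(G)$ where you use the inequality $c(G_v\setminus v)\ge c(G_v)$; neither affects the logic.
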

\begin{proof}
    Let \( G \in \mathcal{G} \) be any graph. We have 
\[
\psi(\widehat{G}) = n(\widehat{G}) - c(\widehat{G}) = n(\widehat{G}) + \text{is}(G) - \text{is}(G) - c(\widehat{G}) = n(G) - c(G) = \psi(G).
\]

Next, consider the case where \( G = \bigsqcup_{i=1}^{c(G)} K_{n_i} \) with \( n_i \geq 2 \) for every \( 1 \leq i \leq c(G) \). In this scenario, we have
\[
\psi(G) = n(G) - c(G) = \sum_{i=1}^{c(G)} (n_i - 1) \geq \sum_{i=1}^{c(G)} \min\{m - 1, n_i - 1\}.
\] 

Next, consider the scenario where \( G \neq \sqcup_{i=1}^{c(G)} K_{n_i} \). In this case, \( G \) has an internal vertex, denoted as \( v \). Since \( v \) is part of at least two maximal cliques, we get \( c(G \setminus v) \geq c(G) \). Hence, $\psi(G\setminus v) =n(G)-1-c(G\setminus v) < n(G)-c(G)=\psi(G). $ 

Also, note that $c(G_v)=c(G)=c(G_v \setminus v).$ Therefore, $\psi(G_v)=n(G_v)-c(G_v)=\psi(G)$ (This  shows that $\psi$ is not a compatible map) and $\psi(G_v \setminus v)=n(G_v)-1-c(G_v)=\psi(G)-1< \psi(G). $  Hence, $\psi$ is a $(\reg,m)$-compatible map. 
\end{proof}

We provide more examples of \((\reg,m)\)-compatible maps later in this section. Now, we establish the known bounds in the literature regarding the regularity of binomial edge ideals, specifically for the case when \(m=2\) and for general values of \(m\) by utilizing the concept of \((\reg,m)\)-compatible maps. We present one of the main new results of this section, which not only generalizes \Cref{reg-up-RSK} but also provides a concise and alternative proof for many regularity upper bound results found in the literature. 

\begin{theorem}\label{ubT}
Let $\psi~:~ \GG \to \NN$ be a $(\reg,m)$-compatible map. Then, for every $G \in \GG,$ $$\reg(S/J_{K_m,G}) \le \psi(G).$$
\end{theorem}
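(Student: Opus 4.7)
The plan is to induct on $\iv(G)$. For the base case $\iv(G) = 0$, every vertex belongs to at most one maximal clique, so each connected component of $G$ is a clique; hence $G = \bigsqcup_{i=1}^{c(G)} K_{n_i}$ with $n_i \ge 1$. Setting $\widehat{G} = \bigsqcup_{n_i \ge 2} K_{n_i}$, the variables attached to isolated vertices do not appear in any generator, so (via \Cref{tensor-res}) $\reg(S/J_{K_m,G})$ equals the regularity of the corresponding quotient on $\widehat{G}$. Applying \Cref{tensor-res} componentwise and the identity $\reg(S_i/J_{K_m, K_{n_i}}) = \min\{m-1, n_i - 1\}$ from \cite[Proposition 3.3]{Kumar-reg}, I obtain $\reg(S/J_{K_m,G}) = \sum_{n_i \ge 2} \min\{m-1, n_i - 1\}$. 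Conditions (a) and (b) of \Cref{def-com} then give $\psi(G) \ge \psi(\widehat{G}) \ge \sum_{n_i \ge 2} \min\{m-1, n_i - 1\}$, settling this case.

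For the inductive step, assume $\iv(G) \ge 1$, so $G$ is not a disjoint union of cliques, and let $v$ be the internal vertex supplied by condition (c) of \Cref{def-com}. \Cref{ak-thm} produces the short exact sequence \eqref{ses}; label the terms $A = S/J_{K_m,G}$, $B = S/J_{K_m,G_v}$, $C = S/(P_v + J_{K_m,G \setminus v})$, $D = S/(P_v + J_{K_m,G_v \setminus v})$. By \Cref{iv-lem}, each of $G_v$, $G \setminus v$, $G_v \setminus v$ has strictly fewer internal vertices than $G$, so the induction hypothesis, combined with \Cref{tensor-res} applied to the Koszul resolution of $P_v$ (allowing one to replace $S$ by the subring without the $v$-variables), yields $\reg(B) \le \psi(G_v)$, $\reg(C) \le \psi(G \setminus v)$, and $\reg(D) \le \psi(G_v \setminus v)$. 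Iterating the three clauses of \Cref{reg-lemma} across the sequence \eqref{ses} gives the uniform estimate
\[
\reg(A) \le \max\{\reg(B),\, \reg(C),\, \reg(D) + 1\}.
\]

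It remains to check this maximum is at most $\psi(G)$. In case (ii) of condition (c) this is immediate: $\reg(B) \le \psi(G_v) = \psi(G)$, $\reg(C) \le \psi(G \setminus v) \le \psi(G)$, and $\reg(D) + 1 \le \psi(G_v \setminus v) + 1 \le \psi(G)$. The delicate case is (i), where condition (c) provides no control over $\psi(G_v \setminus v)$; this is the main obstacle. I would circumvent it by the following observation: every binomial generator of $J_{K_m, G_v}$ involving $v$ already lies in $P_v$, so $P_v + J_{K_m, G_v \setminus v} = P_v + J_{K_m, G_v}$, and consequently $D \cong B/P_v B$. Since $P_v$ is generated by $m$ linear forms and $\reg(M/xM) \le \reg(M)$ for any linear form $x$ on a finitely generated graded module $M$ (a standard consequence of the long exact sequence in local cohomology), iterating gives $\reg(D) \le \reg(B) \le \psi(G_v) \le \psi(G) - 1$; in particular $\reg(D) + 1 \le \psi(G)$. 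Combined with $\reg(B) \le \psi(G) - 1$ and $\reg(C) \le \psi(G)$, the induction closes.
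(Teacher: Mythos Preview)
Your overall strategy coincides with the paper's: induct on $\iv(G)$, handle the base case via \cite[Proposition~3.3]{Kumar-reg} together with conditions (a)--(b), and for the inductive step feed the short exact sequence \eqref{ses} into \Cref{reg-lemma}. You are in fact more careful than the paper at one point: you notice that in alternative (i) of condition~(c) the definition gives no control over $\psi(G_v\setminus v)$, and you propose to close the gap by proving $\reg(D)\le\reg(B)$ directly. That is exactly the right inequality to aim for.

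The gap is in your justification of that inequality. The assertion ``$\reg(M/xM)\le\reg(M)$ for any linear form $x$'' is \emph{not} a standard fact, and the long exact sequence in local cohomology does not deliver it: chasing the two short exact sequences obtained from $0\to(0:_M x)(-1)\to M(-1)\to M\to M/xM\to 0$ only yields $\reg(M/xM)\le\max\{\reg(M),\reg(0:_M x)-1\}$, so one still needs $\reg(0:_M x)\le\reg(M)+1$, which you have not established. The usual hypothesis making this work is that $x$ be almost regular (i.e., $0:_M x$ has finite length), but the variables $x_{i,v}$ are typically not even a regular sequence on $B=S/J_{K_m,G_v}$; already for $m=2$ and $G_v$ a single edge $\{v,w\}$ one has $(J_{G_v},x_v)=(x_v,x_wy_v)$, and $y_v$ is a zerodivisor modulo this ideal. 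The clean and correct route is combinatorial: $G_v\setminus v$ is an induced subgraph of $G_v$, and the Matsuda--Murai restriction-of-resolution argument (via the $\mathbb{Z}^{n}$-multigrading by vertices, which applies verbatim to $J_{K_m,-}$) gives $\beta_{i,j}(S_v/J_{K_m,G_v\setminus v})\le\beta_{i,j}(S/J_{K_m,G_v})$ for all $i,j$, hence $\reg(D)\le\reg(B)\le\psi(G_v)\le\psi(G)-1$. With this replacement your argument is complete.
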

\begin{proof}
We prove the assertion by induction on $\iv(G)$, the number of internal vertices of $G$. 
If \(\iv(G) = 0\), then \(G\) is the disjoint union of complete graphs. Thus, we can express \(\widehat{G}\) as \(\widehat{G} = \sqcup_{i=1}^{c(\widehat{G})} K_{n_i}\), where \(n_i \geq 2\) for \(1 \leq i \leq c(\widehat{G})\). Since \(J_{K_m, G} = J_{K_m, \widehat{G}}\), it follows that \(\reg(S/J_{K_m, G}) = \reg(S/J_{K_m, \widehat{G}}) = \reg(\widehat{S}/J_{K_m, \widehat{G}})\), where \(\widehat{S} = K[x_{i,j} : i \in [m] \text{ and } j \in V(\widehat{G})]\).

From \cite[Proposition 3.3]{Kumar-reg}, we have: \[
\reg(S/J_{K_m, G}) = \sum_{i=1}^k \min\{m-1, n_i-1\}.
\]
Now, combining Parts (a) and (b) of \Cref{def-com}, 
\[
\reg(S/J_{K_m, G}) = \sum_{i=1}^k \min\{m-1, n_i-1\} \leq \psi(\widehat{G}) \leq \psi(G).
\]
Thus, the result holds true for graphs with \(\iv(G) = 0\).

Next, we assume that \(\iv(G) > 0\). According to Part (c) of \Cref{def-com}, there exists an internal vertex \(v \in V(G)\) such that \(\psi(G \setminus v) \leq \psi(G)\) and \(\psi(G_v) < \psi(G)\) or \(\psi(G \setminus v) \leq \psi(G)\), \(\psi(G_v) = \psi(G)\) and \(\psi(G_v \setminus v) < \psi(G)\).  By \Cref{iv-lem}, we have $\iv(G\setminus v) < \iv(G)$, $\iv(G_v) <\iv(G)$ and $\iv(G_v\setminus v) <\iv(G)$. Set $S_v=K[x_{i,j}~:~i \in [m] \text{ and } j \in V(G\setminus v)].$ Thus, by induction, $\reg(S_v/J_{K_m,G\setminus v}) \le \psi(G\setminus v)$, $\reg(S/J_{K_m,G_v}) \le \psi(G_v)$  and $\reg(S_v/J_{K_m,G_v\setminus v}) \le \psi(G_v\setminus v)$. 

Now,  consider the short exact sequence \eqref{ses}. % with $P_v=(x_{i,v}~:~ 1 \le i \le m)$.
Note that $$\reg\left(\frac{S}{P_v+J_{K_m,G\setminus v}}\right)=\reg\left(\frac{S_v}{J_{K_m,G\setminus v}}\right)\le \psi(G\setminus v)$$ and $$\reg\left(\frac{S}{P_v+J_{K_m,G_v\setminus v}}\right)=\reg\left(\frac{S_v}{J_{K_m,G_v\setminus v}}\right)\le \psi(G_v\setminus v).$$ 

By applying \Cref{reg-lemma} to the short exact sequence \eqref{ses}, we get \begin{align*} \reg(S/J_G) &\le \max\{\reg(S_v/J_{K_m,G\setminus v}), \reg(S/J_{K_m,G_v}), \reg(S_v/J_{K_m,G_v\setminus v}) +1 \} \\ & \le \max\{ \psi(G\setminus v), \psi(G_v), \psi(G_v\setminus v)+1 \} \le \psi(G), \end{align*} where the last inequality follows from the part $(c)$ of \Cref{def-com}.  Hence, for every $G\in \GG,$ $\reg(S/J_{K_m,G}) \le \psi(G).$
\end{proof}

As an immediate consequence of \Cref{comp-MM} and \Cref{ubT}, we not only reaffirm the regularity upper bound result established by Matsuda and Murai (\Cref{first-gen-up}) for the case when \(m=2\), but we also provide a concise and alternative proof of this result. Furthermore, we retrieve the upper bound for the general case that was established by Kumar, given in \Cref{up-vert}.

\begin{remark}
We would like to note that the bound in \Cref{up-vert} is indeed tight. If \( G \) is a path graph, then without any restrictions on \( m \ge 2 \), we have \( \reg(S/J_{K_m,G}) = n(G) - 1 \). Furthermore, as pointed out in \Cref{up-vert}, if \( m \geq n \), then for every graph \( G \in \GG \), it holds that \( \reg(S/J_{K_m,G}) = n(G) - c(G) \). This evidence suggests that \Cref{up-vert} represents a best possible bound. 
\end{remark}

For \( m = 2 \), the equality \( \reg(S/J_{G}) = n(G) - c(G) \) holds if and only if \( G \) is a disjoint union of paths. This was conjectured by Matsuda and Murai \cite{MM} and has been proven by Kiani and Saeedi Madani in \cite{KMJCTA}. 

Given the above remark, it is natural to pose the following question.

\begin{question}\label{ques-gen-mm}
For fixed \( 2 \le m \le n - 1 \), for which graphs does \( \reg(S/J_{K_m,G}) = n(G) - c(G) \) hold?
\end{question}

Next, we provide more examples of $(\reg,m)$-compatible maps. 

\begin{proposition}\label{comp-eta}
Let $\psi~:~\GG \to \NN$ be defined by $\psi(G)=(m-1) \eta(G)$. Then $\psi$ is a $(\reg,m)$-compatible map.
\end{proposition}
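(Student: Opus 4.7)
The plan is to verify the three conditions in \Cref{def-com} for $\psi(G) = (m-1)\eta(G)$. Parts (a) and (b) are short combinatorial checks, while part (c) is the substantive obstacle and will be handled by invoking work already essentially present in \cite{RSK2}.

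For condition (a), I would observe that $\widehat{G}$ is obtained from $G$ by deleting only isolated vertices, so the two graphs share the same edge set and hence the same collection of clique-disjoint sets. Therefore $\eta(\widehat{G}) = \eta(G)$, and consequently $\psi(\widehat{G}) = \psi(G)$.

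For condition (b), assume $G = \sqcup_{i=1}^{c(G)} K_{n_i}$ with $n_i \ge 2$ for every $i$. Each $K_{n_i}$ is itself a maximal clique containing all of its edges, so a clique-disjoint set may use at most one edge from each $K_{n_i}$; conversely, picking exactly one edge from each $K_{n_i}$ yields a clique-disjoint set, since two edges in distinct components cannot lie in a common clique. Hence $\eta(G) = c(G)$, and since $\min\{m-1, n_i-1\} \le m-1$ for every $i$,
\[
\psi(G) = (m-1)\,c(G) \ \ge\ \sum_{i=1}^{c(G)} \min\{m-1, n_i - 1\}.
\]

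For condition (c), let $G$ be a graph that is not a disjoint union of complete graphs; the step I expect to be the main obstacle is producing an appropriate internal vertex. Here I would invoke the fact, already established in \cite{RSK2} when deducing \Cref{reg-up-RSK-1} from \Cref{reg-up-RSK}, that $\eta$ is a compatible map in the sense of \cite[Definition 2.1]{RSK2}. Concretely, their argument produces a vertex $v \in V(G)$ with $\eta(G\setminus v) \le \eta(G)$ and $\eta(G_v) < \eta(G)$; the first inequality is trivial because every clique of $G\setminus v$ is a clique of $G$, while the strict second inequality is the delicate combinatorial content. Any non-internal vertex $w$ satisfies $G_w = G$, so the strict inequality forces $v$ to be internal, as required by (c). Multiplying both inequalities by the positive integer $m-1$ yields $\psi(G\setminus v) \le \psi(G)$ and $\psi(G_v) < \psi(G)$, which is the first of the two alternatives of (c). The only new ingredient beyond \cite{RSK2} is the observation that scaling $\eta$ by $m-1$ fits the looser $(\reg,m)$-compatible framework introduced in this section.
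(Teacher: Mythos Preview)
Your proposal is correct and follows essentially the same approach as the paper's proof: a direct verification of conditions (a), (b), (c) of \Cref{def-com}, with the substantive content of (c) outsourced to \cite[Theorem 2.6]{RSK2}. The only cosmetic difference is in (c): the paper first picks an internal vertex $v$ and then cites the proof of \cite[Theorem 2.6]{RSK2} to obtain $\eta(G\setminus v)\le\eta(G)$ and $\eta(G_v)<\eta(G)$, whereas you take the vertex produced by \cite{RSK2} and argue a posteriori that it must be internal since $G_w=G$ for any non-internal $w$; both routes are valid and amount to the same thing.
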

\begin{proof}
Let \( G \in \mathcal{G} \) be any graph. Since $E(\widehat{G})=E(G)$,  $\eta(\widehat{G})=\eta(G)$, and hence, 
\[
\psi(\widehat{G}) = (m-1)\eta(\widehat{G})=(m-1)\eta(G)=\psi(G).
\]

Now, let \( G = \bigsqcup_{i=1}^{c(G)} K_{n_i} \) with \( n_i \geq 2 \) for every \( 1 \leq i \leq c(G) \). In this scenario, $\eta(G)=\sum_{i=1}^{c(G)} \eta(K_{n_i})=c(G)$ as a complete graph itself is a  clique.  Thus, we have
\[
\psi(G) = (m-1)\eta(G) = (m-1)c(G) \geq  \sum_{i=1}^{c(G)} \min\{m - 1, n_i - 1\}.
\] 

Next, assume that \( G \neq \sqcup_{i=1}^{c(G)} K_{n_i} \). In this case, \( G \) has an internal vertex, say  \( v \). It follows from the proof of \cite[Theorem 2.6]{RSK2} that $\eta(G\setminus v) \le \eta(G)$ and $\eta(G_v) <\eta(G)$. Consequently,  $\psi(G\setminus v) =(m-1)\eta(G\setminus v) \le (m-1)\eta(G)=\psi(G)$, and $\psi(G_v )=(m-1)\eta(G_v) <(m-1)\eta(G)=\psi(G). $  Hence, $\psi$ is a $(\reg,m)$-compatible map.
\end{proof}

As an immediate consequence of \Cref{comp-eta} and \Cref{ubT}, we not only recover the regularity upper bound result by Rouzbahani Malayeri, Saeedi Madani, and Kiani (\Cref{reg-up-RSK-1}) for the case when \( m=2 \), but we also recover the regularity upper bound (\Cref{reg-up-chordal-m}) established by Kumar for chordal graphs for any \( m \ge 2 \). Additionally, we provide a positive answer to \Cref{reg-up-conj-m}: 
\begin{corollary}\label{cor-eta-m}
Let $G\in \GG$. Then, for any $m\ge 2$,
$$\reg(S/J_{K_m,G}) \le (m-1)\eta(G).$$
\end{corollary}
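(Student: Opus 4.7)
The plan is to combine the two pieces machinery that were just set up: the general framework of $(\reg,m)$-compatible maps from \Cref{ubT}, and the specific verification, in \Cref{comp-eta}, that the assignment $G \mapsto (m-1)\eta(G)$ satisfies the three axioms of a $(\reg,m)$-compatible map. Once both are in hand, the statement falls out in a single line, so no genuinely new ingredient should be required.

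More concretely, first I would fix $m \ge 2$ and define $\psi : \GG \to \NN$ by $\psi(G) = (m-1)\eta(G)$. Next I would invoke \Cref{comp-eta} to conclude that $\psi$ is $(\reg,m)$-compatible; this is where all the graph-theoretic content actually sits (the key observations being that $\eta$ is insensitive to isolated vertices, that $\eta(\sqcup K_{n_i}) = c(G)$ which dominates $\sum \min\{m-1,n_i-1\}$ after multiplication by $m-1$, and that for any internal vertex $v$ one has $\eta(G\setminus v)\le \eta(G)$ while $\eta(G_v) < \eta(G)$, the strict drop being what allows the inductive argument to close). Finally, I would apply \Cref{ubT} to this particular $\psi$ and read off
\[
\reg(S/J_{K_m,G}) \;\le\; \psi(G) \;=\; (m-1)\eta(G),
\]
which is exactly the claim.

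Because both the heavy lifting on the combinatorial side (the behaviour of $\eta$ under vertex removal and neighbourhood-completion) and the inductive regularity argument have already been carried out in the two preceding results, there is essentially no remaining obstacle: the only thing to verify is that the hypotheses of \Cref{ubT} match the conclusion of \Cref{comp-eta}, which they do verbatim. If one wanted to underline the content, one could note that this corollary simultaneously subsumes \Cref{reg-up-RSK-1} (the $m=2$ case of Rouzbahani Malayeri--Saeedi Madani--Kiani) and \Cref{reg-up-chordal-m} in the direction of the $(m-1)\mathcal{C}(G)$ bound, since $\eta(G) \le \mathcal{C}(G)$ for every $G \in \GG$, and in particular it settles the $(m-1)\mathcal{C}(G)$ half of \Cref{reg-up-conj-m} in full generality.
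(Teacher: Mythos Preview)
Your proposal is correct and matches the paper's own reasoning exactly: the paper states \Cref{cor-eta-m} as an immediate consequence of \Cref{comp-eta} and \Cref{ubT}, with no additional argument. Your elaboration of what each ingredient contributes and the remarks about subsuming \Cref{reg-up-RSK-1} and \Cref{reg-up-chordal-m} also mirror the commentary the paper gives around the corollary.
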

As we know, \(\eta(G) \leq \mathcal{C}(G)\). The combination of \Cref{cor-eta-m} and \Cref{up-vert} provides an affirmative answer to \Cref{reg-que} proposed by Saeedi Madani and Kiani. 

\begin{remark}
It can be noted that if \( G \) is a block graph, then \( \eta(G) = \mathcal{C}(G) \). As indicated in \cite[Theorem 3.12]{Kumar-reg}, if \( G \) is a block graph such that for every maximal clique \( \mathcal{V} \), the condition \( |\mathcal{V}| \geq m + \iv(\mathcal{V}) \) holds, where \( \iv(\mathcal{V}) \) represents the number of internal vertices of \( G \) that belong to \( \mathcal{V} \), then we have \( \reg(S/J_{K_m,G}) = (m-1)\eta(G) \). This result demonstrates that the bound presented in \Cref{cor-eta-m} is indeed attainable. 
\end{remark}
Given the previous remark, it is reasonable to pose the following question: 
\begin{question}
Characterize completely the graphs for which $\reg(S/J_{K_m,G})=(m-1)\eta(G).$
\end{question}

We will now discuss another recent upper bound for the regularity of binomial edge ideals, established by Ene, Rinaldo, and Terai in 2020 \cite{ERT20}. This upper bound improves upon the result presented in \Cref{first-gen-up}. %Before introducing this new finding, we revisit the concept of clique number from graph theory.

Recall that the {\it clique number} of a connected graph \(G\), denoted as \(\omega(G)\), refers to the maximum size of a clique within \(G\). We introduce a new invariant defined as follows: suppose \(G = \sqcup_{i=1}^{c(G)} G_i\), 
\[
\gamma_m(G) := \sum_{i=1}^{c(G)} \min\{m-\omega(G_i)-1,-1\}.
\]

To illustrate $\gamma_m(G)$, we use the graph in \Cref{example:gen-graph}. For  the graph $G$ in \Cref{example:gen-graph}, we can observe that $\omega(G)=4$ and that $G$ consists of only one component. Therefore, $\gamma_2(G)=-3$, $\gamma_3(G)=-2$, and $\gamma_m(G)=-1$ for all $m \ge 4$.

Note that $\gamma_2(G)=\sum_{i=1}^{c(G)} \min\{1-\omega(G_i),-1\}=-\sum_{i=1}^{c(G)} \max\{\omega(G_i)-1,1\}.$ So, if $E(G_i)\neq \emptyset$ and  $\Delta(G_i)$ is the clique complex of $G_i$, then $\dim(\Delta(G_i))=\max\{\omega(G_i)-1,1\}.$ And if $E(G_i) =\emptyset$, then $\dim(\Delta(G_i))=0$. Thus, $n(G)+\gamma_2(G) \le n(G) -\sum_{i=1}^{c(G)} \dim(\Delta(G_i)).$ Ene, Rinaldo and Terai \cite{ERT20} proved the following: 

\begin{theorem}\label{ert-reg-up}\cite[Corollary 2.2]{ERT20}
Let $G\in \GG.$ Then, $\reg(S/J_G) \le n(G)- \sum_{i=1}^{c(G)} \dim(\Delta(G_i)).$
\end{theorem}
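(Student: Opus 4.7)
The plan is to invoke the framework of $(\reg,m)$-compatible maps developed above, with $m=2$. I would define $\psi:\GG\to\NN$ by
\[
\psi(G) := n(G) - \sum_{i=1}^{c(G)}\dim(\Delta(G_i)),
\]
and show that $\psi$ is $(\reg,2)$-compatible. Then \Cref{ubT} applied to $\psi$ gives the desired bound at once.

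Condition (a) is immediate since each isolated vertex contributes $1$ to $n(G)$ but $0$ to $\sum_i\dim(\Delta(G_i))$, so $\psi(\widehat G)=\psi(G)-\text{is}(G)\le\psi(G)$. For condition (b), if $G=\bigsqcup_{i=1}^{c(G)}K_{n_i}$ with $n_i\ge 2$, then $\dim(\Delta(K_{n_i}))=n_i-1$, giving $\psi(G)=c(G)=\sum_{i=1}^{c(G)}\min\{1,n_i-1\}$.

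The bulk of the work lies in condition (c), where I would show that \emph{any} internal vertex $v$ already witnesses the required disjunction. Let $G_1$ be the component of $G$ containing $v$ and $H_1,\ldots,H_t$ the components of $G_1\setminus v$. Since $\dim(\Delta(H))=\omega(H)-1$ whenever $H$ has an edge, the inequality $\psi(G\setminus v)\le\psi(G)$ reduces to verifying $\dim(\Delta(G_1))\le 1+\sum_j\dim(\Delta(H_j))$. Picking a maximum clique $K$ of $G_1$, either $v\notin K$ (so $K$ survives whole inside some $H_j$, with slack $1$ to spare), or $v\in K$ and $K\setminus\{v\}$ is a clique of size $\omega(G_1)-1$ in some $H_j$; a small case analysis on whether $\omega(G_1)\ge 3$ or $\omega(G_1)=2$ handles both. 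The inequality $\psi(G_v)\le\psi(G)$ is immediate from $\omega((G_1)_v)\ge\omega(G_1)$.

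The main obstacle, and the genuinely delicate point, is the borderline situation $\psi(G_v)=\psi(G)$, i.e.\ $\omega((G_1)_v)=\omega(G_1)$, where I need the strict inequality $\psi(G_v\setminus v)<\psi(G)$. Two observations drive the proof. First, because $N(v)$ becomes a clique in $(G_1)_v$, any path through $v$ can be rerouted, so $(G_1)_v\setminus v$ remains connected. Second, I claim that some maximum clique of $(G_1)_v$ must avoid $v$ entirely. Any maximum clique of $(G_1)_v$ containing $v$ is contained in $N(v)\cup\{v\}$, so has size at most $\deg(v)+1$; if \emph{every} maximum clique of $(G_1)_v$ contained $v$, this would force $\omega((G_1)_v)=\deg(v)+1$ and the unique maximum clique of $(G_1)_v$ to be $N(v)\cup\{v\}$. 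But every maximum clique of $G_1$ is again maximum in $(G_1)_v$, hence would have to equal $N(v)\cup\{v\}$, making $N(v)$ a clique in $G_1$ and $v$ simplicial in $G_1$ — a contradiction with $v$ being internal. Hence some maximum clique of $(G_1)_v$ avoids $v$, giving $\omega((G_1)_v\setminus v)\ge\omega((G_1)_v)$ and thus $\psi(G_v\setminus v)\le\psi(G_v)-1<\psi(G)$, finishing the verification of (c) and hence the proof.
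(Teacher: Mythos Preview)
Your proposal is correct and follows essentially the same route the paper takes: the paper recovers the Ene--Rinaldo--Terai bound as the $m=2$ instance of \Cref{reg-up-gamma}, obtained by showing in \Cref{prop-comp-gamma} that $\psi(G)=n(G)+\gamma_m(G)$ is $(\reg,m)$-compatible, and for $m=2$ this map agrees with your $n(G)-\sum_i\dim(\Delta(G_i))$ on graphs without isolated vertices. The one stylistic difference worth noting is in the verification of condition~(c): the paper bypasses your borderline analysis entirely by proving $\omega((G_1)_v\setminus v)\ge\omega(G_1)$ \emph{unconditionally} via a one-line case split on whether $v$ lies in a chosen maximum clique of $G_1$ (if it does, $N_{G_1}(v)\cup\{v\}$ strictly contains that clique since $v$ is internal, so $\omega((G_1)_v)>\omega(G_1)$ and losing $v$ costs at most one; if not, the clique survives in $(G_1)_v\setminus v$). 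This yields $\psi(G_v\setminus v)<\psi(G)$ outright, so together with $\psi(G_v)\le\psi(G)$ one of the two branches of~(c) is automatically satisfied, with no need for your contradiction argument about simplicial vertices.
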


Notice that \Cref{ert-reg-up} improves the upper bound for the regularity of binomial edge ideals that was previously established by Matsuda and Murai (\Cref{first-gen-up}). This enhancement applies specifically to graphs \( G \) that do not have isolated vertices and contain at least one triangle. Our goal is to extend this result for a general \( m \) by utilizing the new invariant \( \gamma_m(G) \). The concept of this new invariant was inspired by the work of Kumar, which states the following: 

\begin{theorem}\label{reg-b-ub}\cite[Theorem 3.10]{Kumar-reg}
    Let $G \in \GG$ be a connected graph. If $2 \le m \le \omega(G)-1$, then $\reg(S/J_{K_m,G}) \le n(G)-2. $
\end{theorem}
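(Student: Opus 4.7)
The plan is to proceed by induction on $\iv(G)$, the number of internal vertices of $G$. For the base case $\iv(G)=0$, connectedness forces $G=K_n$ for some $n$, and the hypothesis $2\le m\le \omega(G)-1=n-1$ combined with \cite[Proposition 3.3]{Kumar-reg} gives $\reg(S/J_{K_m,G})=\min\{m-1,n-1\}=m-1\le n-2$, settling this case.

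For the inductive step with $\iv(G)\ge 1$, the key is to choose an internal vertex $v$ with \emph{large} degree. I would fix a maximum clique $\mathcal{V}$ of $G$ (so $|\mathcal{V}|=\omega(G)\ge m+1$) and select $v\in\mathcal{V}$ having at least one neighbor outside $\mathcal{V}$. Such a $v$ must exist: since $\iv(G)\ge 1$ the graph $G$ is not a single complete graph, so $V(G)\setminus\mathcal{V}\ne\emptyset$, and connectedness forces some vertex of $\mathcal{V}$ to have an edge leaving $\mathcal{V}$; this vertex is automatically internal, belonging to $\mathcal{V}$ and to a distinct maximal clique through the outside edge. The resulting degree estimate $\deg_G(v)\ge |\mathcal{V}|-1+1=\omega(G)\ge m+1$ will be the essential numerical input.

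I would then apply \Cref{reg-lemma} to the short exact sequence \eqref{ses} for this $v$, reducing the target to bounding three regularities. For $S/J_{K_m,G_v}$: the graph $G_v$ is connected, still satisfies $\omega(G_v)\ge\omega(G)\ge m+1$, and has $\iv(G_v)<\iv(G)$ by \Cref{iv-lem}, so the inductive hypothesis yields $\le n(G)-2$. For $S_v/J_{K_m,G\setminus v}$: \Cref{up-vert} directly delivers $\le n(G)-1-c(G\setminus v)\le n(G)-2$. For $S_v/J_{K_m,G_v\setminus v}$ the target is $\le n(G)-3$, so that the ``$+1$'' in \Cref{reg-lemma} produces $n(G)-2$; here the argument splits on connectivity. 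If $G_v\setminus v$ is connected, then the clique $N_G(v)$ of size $\deg_G(v)\ge m+1$ sits inside it, so $\omega(G_v\setminus v)\ge m+1$, and the inductive hypothesis applies since $\iv(G_v\setminus v)=\iv(G_v)<\iv(G)$ by \Cref{iv-lem}. If $G_v\setminus v$ is disconnected, then $c(G_v\setminus v)\ge 2$ and \Cref{up-vert} yields $\le (n(G)-1)-2=n(G)-3$. Combining the three bounds via \Cref{reg-lemma} closes the induction.

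The main delicate point is securing $\deg_G(v)\ge m+1$: an arbitrary internal vertex would only guarantee $\deg_G(v)\ge 2$, which is too weak to feed the third bound. Restricting $v$ to sit inside a maximum clique of $G$ is precisely what preserves a large enough clique under the passage to $G_v\setminus v$, making the connected/disconnected dichotomy exhaustive and quantitatively sharp. A secondary subtlety worth verifying is that the hypothesis $m\le \omega(H)-1$ really propagates to every smaller graph $H\in\{G_v,\,G_v\setminus v\}$ to which the inductive hypothesis is applied; the bound $\omega(H)\ge m+1$ in each case has already been checked above.
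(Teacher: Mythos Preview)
Your argument is correct. The paper does not prove \Cref{reg-b-ub} directly (it is quoted from \cite{Kumar-reg}); instead it deduces it from the stronger \Cref{reg-up-gamma}, obtained by showing that $\psi(G)=n(G)+\gamma_m(G)$ is a $(\reg,m)$-compatible map (\Cref{prop-comp-gamma}) and invoking \Cref{ubT}. Both routes rest on the same engine---induction on $\iv(G)$ via the short exact sequence \eqref{ses} together with \Cref{iv-lem}---but the bookkeeping differs: you force the needed clique-number inequalities by \emph{choosing} $v$ inside a maximum clique, whereas the paper allows any internal vertex and instead tracks the clique number through the invariant $\gamma_m$. Your approach is more elementary and tailored to this single statement; the paper's abstraction buys the sharper bound $\reg(S/J_{K_m,G})\le n(G)+\gamma_m(G)$ for all graphs, of which \Cref{reg-b-ub} is the special case $\gamma_m(G)\le -2$. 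One minor remark: since $G$ is connected and $N_G(v)$ becomes a clique in $G_v$, the graph $G_v\setminus v$ is automatically connected, so your disconnected branch is vacuous (though harmless).
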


We now extend \Cref{ert-reg-up} for a general value of \( m \) and strengthen \Cref{reg-b-ub}. Additionally, we unify these two results using the concept of a \((\reg, m)\)-compatible map. It is important to note that the five-page proof provided by Ene, Rinaldo, and Terai \cite[Theorem 2.1]{ERT20} employs a completely different approach, while our proof can be categorized under the framework of \((\reg, m)\)-compatible maps.  

\begin{proposition}\label{prop-comp-gamma}
Let $\psi~:~\GG \to \NN$ be defined by $\psi(G)=n(G)+\gamma_m(G)$. Then $\psi$ is a $(\reg, m)$-compatible map.
\end{proposition}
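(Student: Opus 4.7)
The plan is to verify conditions (a)--(c) of Definition~\ref{def-com} for $\psi(G)=n(G)+\gamma_m(G)$. Rewriting $\gamma_m(G)=-\sum_{i=1}^{c(G)} f(\omega(G_i))$ with $f(k):=\max\{k-m+1,1\}$, one has $\psi(G) = n(G)-\sum_{i=1}^{c(G)} f(\omega(G_i))$. For (a), each isolated vertex is a $K_1$-component contributing $+1$ to $n(G)$ and $-f(1)=-1$ to $\psi$, so $\psi(\widehat{G})=\psi(G)$. For (b), with $G=\sqcup_i K_{n_i}$ and $n_i\ge 2$, each component contributes $n_i-f(n_i)=\min\{m-1,n_i-1\}$ to $\psi(G)$, giving equality in (b).

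For (c), since $\psi$ is additive over disjoint unions and the conditions involve only the component containing $v$, I would reduce without loss of generality to the case where $G$ is connected and non-complete. Write $\omega:=\omega(G)$ and split on $\omega$ versus $m$. When $\omega\ge m$, I would pick a maximum clique $K$ of $G$; by connectivity and $K\ne V(G)$, some $v\in K$ has a neighbor $u\notin K$, which makes $v$ internal (it lies in $K$ and in any maximal clique containing $\{v,u\}$) and forces $\deg_G(v)\ge\omega$. I then verify the first disjunct of (c): the clique $K\setminus\{v\}$ of size $\omega-1$ lives in some component $H_j$ of $G\setminus v$, giving $f(\omega(H_j))\ge\max\{\omega-m,1\}\ge f(\omega)-1$ and thus $\psi(G\setminus v)\le\psi(G)$; and $\{v\}\cup N_G(v)$ is a clique of size $\ge\omega+1$ in $G_v$, so $f(\omega(G_v))\ge f(\omega)+1$ and $\psi(G_v)<\psi(G)$.

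When $\omega<m$ one has $\psi(G)=n(G)-1$. For any internal vertex $v$, the inequality $\psi(G\setminus v)<\psi(G)$ is automatic because each component of $G\setminus v$ contributes at least $1$ to the subtracted sum. If $\omega(G_v)\ge m+1$ then $f(\omega(G_v))\ge 2>1$ and the first disjunct again applies; otherwise $\omega(G_v)\le m$ forces $f(\omega(G_v))=1=f(\omega)$, so $\psi(G_v)=\psi(G)$, and since $G_v\setminus v$ is nonempty one gets $\psi(G_v\setminus v)<\psi(G)$, which is the second disjunct. The main obstacle is the case $\omega\ge m$: a generic internal vertex $v$ may satisfy neither $\deg(v)\ge\omega$ nor $\omega(G\setminus v)\ge\omega-1$, and the two disjuncts of (c) require both simultaneously; picking $v$ inside a maximum clique with a neighbor outside is precisely what links the two requirements, since membership in $K$ gives the clique-preservation bound for $G\setminus v$ while the external neighbor pushes $\deg_G(v)$ past $\omega$.
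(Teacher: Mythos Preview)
Your proof is correct, but the route differs from the paper's in one substantive way. The paper does not split on whether $\omega(G)\ge m$; instead it shows that for \emph{every} internal vertex $v$ (working in the component $G_1$ containing $v$) the three inequalities
\[
\psi(G\setminus v)\le\psi(G),\qquad \psi(G_v)\le\psi(G),\qquad \psi(G_v\setminus v)<\psi(G)
\]
hold simultaneously, which covers both disjuncts of part (c) at once. The key step there is the clique-number inequality $\omega((G_1)_v\setminus v)\ge\omega(G_1)$, proved by a short case analysis on whether $v$ lies in a maximum clique of $G_1$; this drives the strict inequality for $G_v\setminus v$ uniformly in $m$. Your approach instead engineers, when $\omega\ge m$, a specific internal vertex $v$ inside a maximum clique with an outside neighbor, so that $\deg_G(v)\ge\omega$ forces $\omega(G_v)\ge\omega+1$ and hence $\psi(G_v)<\psi(G)$ directly via the first disjunct. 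Both arguments are valid; the paper's is shorter and uniform, while yours makes the mechanism in the large-clique regime more explicit.

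One small inaccuracy in your closing commentary (not in the proof itself): the inequality $\omega(G\setminus v)\ge\omega(G)-1$ holds for \emph{every} vertex $v$, since deleting $v$ from a maximum clique leaves a clique of size $\omega(G)-1$. So there is no obstacle on that side; the only thing that can fail for a generic internal $v$ when $\omega\ge m$ is the \emph{strict} inequality $\psi(G_v)<\psi(G)$---but then the paper's bound $\omega(G_v\setminus v)\ge\omega(G)$ still yields $\psi(G_v\setminus v)<\psi(G)$, and the second disjunct applies. Your special choice of $v$ is therefore sufficient but not necessary.
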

\begin{proof}
     Let \( G \in \mathcal{G} \) be any graph. Since $E(\widehat{G})=E(G)$, we get \begin{align*} \gamma_m(G)&=\sum_{i=1}^{c(G)} \min\{m-\omega(G_i)-1,-1\} =\gamma_m(\widehat{G})-\text{is}(G).\end{align*} Therefore,  
\begin{align*}
    \psi(\widehat{G}) &= n(\widehat{G})+\gamma_m(\widehat{G})=(n(G)-\text{is}(G))+(\gamma_m(G)+\text{is}(G))=\psi(G).
\end{align*}

Next, consider the case when  \( G = \sqcup_{i=1}^{c(G)} K_{n_i} \) with $n_i \ge 2$ for every $1 \le i \le c(G).$ In this situation, since each component is a complete graph, \( \gamma_m(G) = \sum_{i=1}^{c(G)} \min\{m-n_i-1,-1\} \). Therefore,  
\[
\psi(G) = n(G)+\gamma_m(G)= \sum_{i=1}^{c(G)} n_i +\sum_{i=1}^{c(G)} \min\{m-n_i-1,-1\} =\sum_{i=1}^{c(G)} \min\{m-1,n_i-1\} .
\] 

Now, assume that \( G \neq \sqcup_{i=1}^{c(G)} K_{n_i} \). In this case, \( G \) has an internal vertex, say \( v \). Suppose $G=\sqcup_{i=1}^{c(G)} G_i$. Without loss of generality, we may assume that $v \in V(G_1).$ We claim that $\omega(({G_1})_v) \ge \omega(({G_1})_v \setminus v) \ge \omega(G_1).$ Let $\mathcal{V}$ be a maximal clique of $G_1$ with size $\omega(G_1).$ If $v \in \mathcal{V}$, then the maximal clique of $(G_1)_v$ containing $v$ also contains $N_{G_1}(v) \cup \{ v\}$. Since $v$ is an internal vertex of $G_1$, we get that $\mathcal{V}$ is a proper subset of $ N_{G_1}(v) \cup \{v\}.$ Therefore, $\omega((G_1)_v) \geq |N_{G_1}(v) \cup \{v\}| > |\mathcal{V}|=\omega(G_1).$ Consequently, $\omega((G_1)_v \setminus v) \ge \omega((G_1)_v)-1 \ge \omega(G_1).$ Suppose $v \not\in \mathcal{V}.$ Then, $\mathcal{V}$ is still a clique in $(G_1)_v$ and $(G_1)_v \setminus v.$ Therefore, $\omega((G_1)_v) \ge \omega((G_1)_v\setminus v) \ge  |\mathcal{V}| =\omega(G_1).$  

Thus, using the fact that $n(G_v)=n(G)$ and the formula for the invariant $\gamma_m(-)$ , we get \begin{align*}
    \psi(G_v)&=n(G_v)+\gamma_m(G_v)\\&=n(G)+\min\{m-\omega((G_1)_v)-1,-1\}+\sum_{i=2}^{c(G)} \min\{m-\omega(G_i)-1,-1\}\\&\le n(G)+\min\{m-\omega(G_1)-1,-1\}+\sum_{i=2}^{c(G)} \min\{m-\omega(G_i)-1,-1\}\\&=n(G)+\gamma_m(G)=\psi(G).
\end{align*}

Similarly, we obtain
\begin{align*}
    \psi(G_v\setminus v)&=n(G_v\setminus v)+\gamma_m(G_v\setminus v)\\&=n(G)-1+\min\{m-\omega((G_1)_v\setminus v)-1,-1\}+\sum_{i=2}^{c(G)} \min\{m-\omega(G_i)-1,-1\}\\&\le n(G)-1+\min\{m-\omega(G_1)-1,-1\}+\sum_{i=2}^{c(G)} \min\{m-\omega(G_i)-1,-1\}\\&=n(G)+\gamma_m(G)-1<\psi(G).
\end{align*}

It remains to prove that $\psi(G\setminus v) \le \psi(G).$ Let $H_{1,1},\ldots,H_{1,k}$ be components of $G_1 \setminus v$. This implies that $G\setminus v= \sqcup_{j=1}^k H_{1,j} \sqcup \sqcup_{i=2}^{c(G) } G_i.$ Note that there exists   $1 \le r \le k$ such that $\omega(H_{1,r}) = \omega(G_1\setminus v) \ge \omega(G_1)-1.$ Therefore, \begin{align*}
    \psi(G\setminus v)&=n(G\setminus v)+\gamma_m(G\setminus v)\\
    &= n(G)-1+ \sum_{j=1}^k \min\{m-\omega(H_{1,j})-1,-1\} + \sum_{i=2}^{c(G)}\min\{m-\omega(G_i)-1,-1\}
    \\
    &\le n(G)-1+  \min\{m-\omega(H_{1,r})-1,-1\} + \sum_{i=2}^{c(G)}\min\{m-\omega(G_i)-1,-1\} \\ 
    &\le n(G) + \min\{m-\omega(H_{1,r})-2,-2\} + \sum_{i=2}^{c(G)}\min\{m-\omega(G_i)-1,-1\} \\ 
    &\le n(G)+\sum_{i=1}^{c(G)}\min\{m-\omega(G_i)-1,-1\}=\psi(G),
\end{align*}
where the last inequality holds true since $\omega(H_{1,r}) \geq \omega(G_1) - 1$. Hence, $\psi$ is a $(\reg,m)$-compatible map.
\end{proof}

As an immediate consequence of \Cref{ubT} and \Cref{prop-comp-gamma}, we obtain a new upper bound on the regularity of generalized binomial edge ideals.  
\begin{corollary}\label{reg-up-gamma}
    Let $G \in \GG. $ Then, $\reg(S/J_{K_m,G}) \le n(G)+\gamma_m(G).$
\end{corollary}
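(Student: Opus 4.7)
The plan is to derive the corollary by a direct appeal to the two preceding results in this section. By \Cref{prop-comp-gamma}, the map $\psi \colon \GG \to \NN$ defined by $\psi(G) = n(G) + \gamma_m(G)$ is $(\reg,m)$-compatible; applying \Cref{ubT} to this $\psi$ then yields $\reg(S/J_{K_m,G}) \le \psi(G) = n(G) + \gamma_m(G)$ for every $G \in \GG$, which is exactly the stated inequality. Thus no new inductive or homological argument is required at this stage: the corollary is a one-line composition of \Cref{prop-comp-gamma} with \Cref{ubT}.

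Since the computational content of the corollary has been packaged into \Cref{prop-comp-gamma}, the only thing to verify in order to close the argument is that the hypotheses of \Cref{ubT} genuinely apply to $\psi = n + \gamma_m$. In practice this means checking that \Cref{prop-comp-gamma} has been stated in the right generality: the corollary is asserted for every $G \in \GG$, whereas intermediate results like \Cref{ert-reg-up} or \Cref{reg-b-ub} impose restrictions (connectedness, a bound on $m$, or the presence of a triangle). I would therefore explicitly confirm that the map $\psi$ of \Cref{prop-comp-gamma} is defined and satisfies (a)--(c) of \Cref{def-com} with no such side conditions, so that \Cref{ubT} may be invoked without caveat, and then simply write \emph{``By \Cref{prop-comp-gamma} and \Cref{ubT}, $\reg(S/J_{K_m,G}) \le n(G) + \gamma_m(G)$.''}

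If I were proving the underlying fact from scratch rather than citing \Cref{prop-comp-gamma}, the main obstacle would be condition (c): given an internal vertex $v$ lying in a component $G_1$, one must show $\psi(G\setminus v) \le \psi(G)$ and either $\psi(G_v) < \psi(G)$ or $\psi(G_v) = \psi(G)$ together with $\psi(G_v \setminus v) < \psi(G)$. The $G_v$ and $G_v \setminus v$ bounds rest on the monotonicity estimate $\omega((G_1)_v) \ge \omega((G_1)_v\setminus v) \ge \omega(G_1)$, which follows because the maximal clique of $(G_1)_v$ containing $v$ absorbs $N_{G_1}(v) \cup \{v\}$. The delicate estimate is $\psi(G\setminus v) \le \psi(G)$, because $G_1 \setminus v$ may split into several components $H_{1,1},\dots,H_{1,k}$, each contributing a new $\min\{m-\omega-1,-1\}$ term to $\gamma_m(G\setminus v)$. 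The saving grace is that at least one component $H_{1,r}$ inherits $\omega(H_{1,r}) \ge \omega(G_1) - 1$, so the ``$-1$'' coming from $n(G\setminus v) = n(G) - 1$ absorbs the potential loss, and the remaining terms only decrease $\gamma_m$. This is the step where care is needed, and it is precisely the place where the refinement in condition (c) of \Cref{def-com} (as compared to \Cref{def-com-RSM2}) pays off. Once the verification is in hand, the corollary follows automatically.
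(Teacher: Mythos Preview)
Your proposal is correct and matches the paper's own argument exactly: the corollary is stated there as an immediate consequence of \Cref{ubT} and \Cref{prop-comp-gamma}, with no additional work. Your one-line deduction ``By \Cref{prop-comp-gamma} and \Cref{ubT}, $\reg(S/J_{K_m,G}) \le n(G) + \gamma_m(G)$'' is precisely what the paper intends.
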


We have seen three different $(\reg,m)$-compatible maps. It is natural to ask whether there are more $(\reg,m)$-compatible maps. We end this section by asking the following question.

\begin{question}
Do more $(\reg,m)$-compatible maps exist? 
\end{question}

\section{Upper bounds for specific classes of graphs}\label{sec:ub-specific}
In the previous section, we discussed general upper bounds for the regularity of (generalized) binomial edge ideals associated with arbitrary graphs. While these bounds are tight for certain classes of graphs, they often fail to be optimal for many other classes. This observation naturally leads to the pursuit of sharper upper bounds or precise computations of regularity for well-studied classes of graphs in graph theory. In this section, we review the advancements in the literature aimed at improving the regularity upper bounds for specific graph classes.

The initial breakthroughs in this area were contributed by Jayanthan, Narayanan and Raghavendra Rao. Their work focused on trees, particularly lobster trees, and introduced tighter bounds based on the structural parameters of the graph. A \textit{spine} of a tree $G$ is a longest path in $G$. For a tree $G$, let $\mathcal{L}(G)$ denote the set of all vertices of degree one in $G$. A tree $G$ is said to be a \textit{caterpillar} if the induced subgraph on $V(G) \setminus \mathcal{L}(G)$ is either an empty graph or a path. A tree $G$ is said to be a \textit{lobster} if the induced subgraph on $V(G) \setminus \mathcal{L}(G)$ is a caterpillar.

\begin{theorem}\cite[Corollary 4.8]{JNR}
If $G$ is a lobster tree with a spine $P$ of length $\ell$ with $t$ limbs and $r$ whiskers attached to the spine $P$, then $\reg(S/J_G) \leq \ell + 2t.$ \end{theorem}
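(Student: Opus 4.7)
The plan is to proceed by induction on $t$, using Ohtani's Lemma (\Cref{oh-lem}) at a carefully chosen internal vertex together with the regularity lemma (\Cref{reg-lemma}) applied to the short exact sequence \eqref{ses} at $m=2$.

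For the base case $t=0$, the graph $G$ is a caterpillar with spine of length $\ell$ and some $r$ whiskers, so the target inequality becomes $\reg(S/J_G)\le \ell$. I would prove this by a secondary induction on $r$: when $r=0$, $G$ is the path of length $\ell$ whose regularity is classically $\ell$. For $r\ge 1$, pick a whisker $u$ attached to a spine vertex $v$; since $P$ is a longest path, $v$ is necessarily interior to $P$, hence an internal vertex of $G$. Applying Ohtani's lemma at $v$, each of the three modules appearing in \eqref{ses} corresponds to a (disjoint union of) caterpillar(s) with spine of length at most $\ell$ and strictly fewer whiskers, and \Cref{tensor-res} together with the secondary induction on $r$ then yields the bound.

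For the inductive step $t\ge 1$, choose an \emph{outermost} limb, namely one whose attachment vertex $v$ is as close as possible to an endpoint of $P$, and let $w$ be the non-leaf, non-spine vertex of that limb, so $w$ is adjacent to $v$ and to $s\ge 1$ leaves $u_1,\ldots,u_s$. Since $w$ is an internal vertex of $G$, Ohtani's lemma at $w$ gives the short exact sequence \eqref{ses}. For $S/J_{G\setminus w}$: deleting $w$ isolates $u_1,\ldots,u_s$ and leaves a lobster $G'$ with the same spine $P$ but only $t-1$ limbs, so by \Cref{tensor-res} and induction $\reg\le \ell+2(t-1)$. For $S/J_{G_w}$ and $S/J_{G_w\setminus w}$: here $\{v,w,u_1,\ldots,u_s\}$ becomes a clique, and one views these graphs as a reduced lobster sharing the vertex $v$ with this attached clique; I expect to bound their regularity by $\ell+2t-1$ and $\ell+2t-2$, respectively, by combining the inductive hypothesis on the lobster part with the chordal-graph estimate \Cref{reg-up-chordal-m} (or the generic bound \Cref{first-gen-up}) on the clique part. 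Feeding these three bounds into \Cref{reg-lemma} then delivers $\reg(S/J_G)\le \ell+2t$.

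The main obstacle is the precise combinatorial analysis of $G_w$ and $G_w\setminus w$: once $N_G(w)\cup\{w\}$ has been turned into a clique, neither graph is itself a lobster, so the inductive hypothesis does not apply to them directly. The crux will be to decompose these modified graphs cleanly into a reduced lobster glued at $v$ to the new clique, and then to stitch together separate regularity bounds across this decomposition. Choosing $w$ from an outermost limb is what makes this manageable, since $v$ then lies near an endpoint of $P$, and the spine of the residual lobster does not shorten in a way that would compromise the induction.
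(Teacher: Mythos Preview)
The survey does not supply its own proof of this statement; it simply quotes \cite[Corollary 4.8]{JNR}. So there is no ``paper's proof'' to compare against, and your proposal has to stand on its own.

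Your base case already contains the same error you flag only later. When $G$ is a caterpillar and you apply \Cref{oh-lem} at an interior spine vertex $v$ carrying a whisker, the graphs $G_v$ and $G_v\setminus v$ are \emph{not} caterpillars: in $G_v$ the set $N_G[v]$ (two spine neighbours plus at least one whisker leaf, so at least four vertices) becomes a clique, and in $G_v\setminus v$ the set $N_G(v)$ becomes a clique of size at least three. Neither graph is a tree, so your ``secondary induction on $r$'' cannot be applied to them, and the sentence ``each of the three modules \ldots\ corresponds to a (disjoint union of) caterpillar(s)'' is false for two of the three. The base case itself is, however, a known theorem quoted later in this very survey: caterpillars satisfy $\reg(S/J_G)=\ell(G)$ by \cite[Theorem~4.1]{FC}, so you can simply cite that instead of reproving it.

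For the inductive step, the gap you acknowledge is real and is not closed by the tools you name. After completing $N_G[w]$ to a clique, $G_w$ is a block graph consisting of the reduced lobster $G'$ (with $t-1$ limbs) together with the clique on $\{v,w,u_1,\ldots,u_s\}$ glued at $v$. But $v$ is an interior spine vertex of $G'$, hence not a free (pendant) vertex there, so the additivity result \cite[Theorem~3.1]{JNR} for decomposition at a pendant vertex does not apply. Nor do \Cref{first-gen-up} or \Cref{reg-up-chordal-m} give the bounds you ``expect'': those estimates are in terms of $n(\cdot)-c(\cdot)$ or $\mathcal{C}(\cdot)$, both of which grow with $s$ and with the number of remaining whiskers, not merely with $\ell$ and $t$. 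What is missing is a lemma to the effect that attaching a single clique at an arbitrary vertex of a graph raises the regularity of the binomial edge ideal by at most one; without such a statement (or a different organization of the induction) the argument does not close.
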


Here, a \textit{limb} is a $K_{1,r}$ and a \textit{whisker} is a an edge. Further refinements were made for general trees by incorporating additional graph parameters:

\begin{theorem}\cite[Corollary 2]{JNR2}
Let $T$ be a tree on $[n]$ with a spine P of length $\ell$. Let $e_2$ denote the
number of edges that are not in $P$ and with both endpoints having a degree at most $2$
and $d_3$ denote the number of vertices, not in $P$, and having degree at least $3$. Then
$\reg(S/J_T ) \leq e_2 + \ell + 2d_3$.
\end{theorem}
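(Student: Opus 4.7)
The plan is to proceed by induction on the number of non-spine vertices $n(T) - \ell - 1$ of $T$. In the base case this number is zero, so $T = P$ is a path of length $\ell$, hence $e_2 = d_3 = 0$, and $P$ is a closed graph with $\mathcal{C}(P) = \ell$ maximal cliques, so the closed graph bound from \cite{KMEJC} (or \Cref{first-gen-up}) gives $\reg(S/J_P) \leq \ell$, matching the asserted inequality.

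For the inductive step, I would choose a leaf $v$ of $T$ not on $P$ that is at maximum distance from $P$; such a $v$ exists because the subforest obtained by deleting the edges of $P$ is nonempty and its extremal leaves must be leaves of $T$ itself. Let $u$ be the unique neighbor of $v$; since $T$ has at least three vertices, $\deg_T(u) \geq 2$, so $u$ is an internal vertex of $T$. Applying Ohtani's lemma (\Cref{oh-lem}) at $u$ yields
\begin{align*}
0 \to S/J_T \to S/J_{T_u} \oplus S/(P_u + J_{T \setminus u}) \to S/(P_u + J_{T_u \setminus u}) \to 0,
\end{align*}
so \Cref{reg-lemma} reduces the problem to controlling the regularities of $S/J_{T_u}$, $S/J_{T \setminus u}$, and $S/J_{T_u \setminus u}$. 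The graphs $T \setminus u$ and $T_u \setminus u$ decompose into disjoint forests on fewer vertices, and via \Cref{tensor-res} the inductive hypothesis applies to each component. One then verifies, by a case analysis on whether $u$ lies on $P$ and on the value of $\deg_T(u)$, that the drop in $e_2 + \ell + 2d_3$ matches the drop required by \Cref{reg-lemma}: when $\deg_T(u)=2$, removing the edge $\{u,v\}$ precisely accounts for one unit of $e_2$; when $\deg_T(u)\ge 3$ and $u\notin V(P)$, removing $v$ may drop $u$ out of the $d_3$ count, costing up to $2$.

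The main obstacle is the piece $T_u$, which is no longer a tree: its vertex $u$ now supports the full clique on $N_T(u) \cup \{u\}$. One strategy is to exploit the fact that the restriction of $T_u$ to $V(T)\setminus\{u\}$ remains a forest, since only edges incident to $u$ are newly added, and then to analyze $T_u$ either by a further Ohtani-type decomposition at a second internal vertex or directly as a block graph with one complete block attached to pendant subtrees. An additional subtlety is that when $u$ itself lies on $P$, deleting $u$ breaks the spine, so the parameters $\ell$, $e_2$, $d_3$ of the resulting components no longer add up in the obvious way to those of $T$; I expect the most delicate part of the argument to be the bookkeeping that certifies, through each case of this split, that the upper bound $e_2 + \ell + 2d_3$ is preserved.
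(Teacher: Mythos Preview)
The paper does not prove this statement at all: it is quoted as \cite[Corollary 2]{JNR2} in the survey portion of \Cref{sec:ub-specific} and no argument is given, so there is no ``paper's own proof'' to compare your proposal against.

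On the merits of the proposal itself, the outline has a real gap that you already half-identify. Your induction hypothesis is formulated only for trees, yet the piece $T_u$ produced by Ohtani's decomposition is never a tree once $\deg_T(u)\ge 2$: it contains the clique on $N_T(u)\cup\{u\}$. Saying you will treat $T_u$ ``by a further Ohtani-type decomposition'' or ``directly as a block graph with one complete block attached to pendant subtrees'' is not yet an argument, because the parameters $\ell$, $e_2$, $d_3$ in the target bound are defined for trees relative to a chosen spine and have no evident meaning for $T_u$; you would need an auxiliary bound for such block graphs, phrased in compatible invariants, and you do not supply one. The second difficulty you flag is equally serious: if $u\in V(P)$ then $T\setminus u$ and $T_u\setminus u$ split the spine, and the new spines of the components need not have lengths summing to $\ell$ (indeed a longer path through a former branch may become the spine of a component), so the bookkeeping $e_2+\ell+2d_3$ can genuinely increase on a component. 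As written the sketch does not close either of these cases.

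If you want a route that stays within trees, a more promising organizing principle---and one consonant with the tools this paper highlights from \cite{JNR}---is the decomposition at a vertex of clique degree~$2$ (\cite[Theorem 3.1]{JNR}), which keeps both pieces trees and makes regularity exactly additive; this reduces the problem to trees with no degree-$2$ vertex off the spine, where the invariants $e_2$ and $d_3$ are much more rigid.
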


Kumar extended these ideas to generalized block graphs by introducing bounds that depend on the interplay between pendent vertices and cliques in the graph. Before stating the result, we briefly recall some key invariants from graph theory:

A vertex $v$ is said to be a \textit{pendent} vertex, if $\deg_G(v) =1$. For $v \in V(G)$, let $\text{cdeg}_G(v)$ denote the number of maximal cliques that contain $v$, and $\text{pdeg}_G(v)$ denote the number of pendent vertices adjacent to $v$. Note that for every $v \in V(G)$, $\text{pdeg}_G(v) \leq \text{cdeg}_G(v)$. A vertex $v \in V(G)$ with $\text{pdeg}_G(v) \geq 1$ is said to be of type $1$, if $\text{cdeg}_G(v) = \text{pdeg}_G(v)+1$ and of type $2$ if $ \text{cdeg}_G(v) \geq \text{pdeg}_G(v) +2$. We denote by $\alpha(G)$ the number of vertices of type $1$ in $G$ and by $\text{pv}(G)$ the number of pendent vertices.
            
\begin{theorem}\label{K-gbg}\cite[Section 4]{AR2}
Let $G$ be a generalized block graph. Then, $\reg(S/J_G) \le \mathcal{C}(G)+\alpha(G)-\text{pv}(G).$ 
\end{theorem}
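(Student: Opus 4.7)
The plan is to proceed by induction on $\iv(G)$, the number of internal vertices of $G$, using Ohtani's lemma (Lemma \ref{oh-lem}) as the workhorse. In the base case $\iv(G) = 0$, the graph $G$ is a disjoint union of complete graphs, and the bound can be verified directly using $\reg(S/J_{K_n}) = 1$ together with Proposition \ref{tensor-res}, after noting that for such $G$ the quantity $\mathcal{C}(G) + \alpha(G) - \text{pv}(G)$ reduces to the number of non-trivial components.

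For the inductive step with $\iv(G) > 0$, I would exploit the tree-like clique structure of generalized block graphs to locate a good internal vertex $v$: concretely, a cut vertex lying in a ``leaf clique'' $\mathcal{V}$ of the block tree, so that outside of $\mathcal{V}$ the vertex $v$ retains the same configuration of cliques while $\mathcal{V} \setminus \{v\}$ consists only of simplicial and pendent vertices of $G$. Applying Lemma \ref{oh-lem} at $v$ yields the short exact sequence
\[
0 \to S/J_G \to S/J_{G_v} \oplus S/(P_v + J_{G \setminus v}) \to S/(P_v + J_{G_v \setminus v}) \to 0,
\]
and Proposition \ref{reg-lemma} reduces the problem to bounding the regularities of the other three quotients. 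A crucial preliminary observation is that $G_v$, $G \setminus v$, and $G_v \setminus v$ (together with the components obtained after deletion) remain generalized block graphs, so that the inductive hypothesis applies, combined with Proposition \ref{tensor-res} to handle direct-sum components.

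The heart of the argument is the combinatorial accounting for how the triple $(\mathcal{C}, \alpha, \text{pv})$ evolves under each operation. Upon passing to $G_v$, the maximal cliques meeting at $v$ collapse into one, reducing $\mathcal{C}$ by a predictable amount, while pendent neighbors of $v$ retain degree one and certain type-$1$ vertices change status. Upon passing to $G \setminus v$, pendent vertices attached to $v$ become isolated and must be stripped before invoking the inductive hypothesis, cliques through $v$ shrink, and vertices previously of type $1$ via $v$ may become simplicial. The target inequality demands not only $\mathcal{C}(H) + \alpha(H) - \text{pv}(H) \le \mathcal{C}(G) + \alpha(G) - \text{pv}(G)$ for $H \in \{G_v, G \setminus v\}$, but also a strict inequality of at least one for $H = G_v \setminus v$, to absorb the ``$+1$'' appearing in Proposition \ref{reg-lemma}(3).

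The main obstacle will be exactly this case analysis, because $\alpha(G) - \text{pv}(G)$ is not monotone under either vertex removal or clique completion: the failure modes must be precisely compensated by the drop in $\mathcal{C}$. I anticipate splitting into cases according to whether $v$ itself is of type $1$, of type $2$, or has no pendent neighbors, and further according to how many cliques at $v$ are edges versus larger cliques. A judicious choice of the leaf block $\mathcal{V}$ — and within it a vertex $v$ whose pendent-neighbor count is as favorable as possible — should be what makes the worst of the three terms in Proposition \ref{reg-lemma} fit under $\mathcal{C}(G) + \alpha(G) - \text{pv}(G)$. Once the bookkeeping is verified in each case, the theorem follows.
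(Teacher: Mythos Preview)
The paper does not prove this theorem; it is quoted from \cite[Section~4]{AR2} as a known result in the survey, with no argument supplied here. So there is no ``paper's own proof'' to compare against.

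That said, your proposed strategy---induction on $\iv(G)$ via Ohtani's lemma and the short exact sequence~\eqref{ses}, with careful bookkeeping of $(\mathcal{C},\alpha,\mathrm{pv})$ under the passages $G\mapsto G_v$, $G\mapsto G\setminus v$, $G\mapsto G_v\setminus v$---is exactly the framework the survey promotes (see \Cref{iv-lem}, \Cref{ubT}, and the discussion around \Cref{def-com}). It is the natural approach and is consistent with how \cite{AR2} is described. Be aware, though, that the invariant $\mathcal{C}(G)+\alpha(G)-\mathrm{pv}(G)$ is \emph{not} a $(\reg,2)$-compatible map in the sense of \Cref{def-com}, since it is tailored to generalized block graphs; your proof must therefore verify closure of this class under the three operations (which you note) \emph{and} carry out the case analysis you flag as the main obstacle. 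That case analysis is genuinely where the work lies, and your sketch does not yet resolve it: in particular, when $v$ has pendent neighbors, deleting $v$ can simultaneously drop $\mathrm{pv}$, drop $\alpha$, and change $\mathcal{C}$ in ways that do not obviously net out, so the ``judicious choice'' of $v$ you allude to needs to be made precise before the argument is complete.
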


A graph $G$ is said to be \textit{Cactus graph} if each block of $G$ either a cycle or an edge. Recently Jayanthan and Sarkar made efforts to improve the regularity of binomial edge ideals of Cactus graphs, resulting in an improved upper bound. They, in fact, considered a slightly more general class of graphs and derived an optimal upper bound for this class.

\begin{theorem}\label{JS-cac}\cite[Theorem 3.2]{JS22}
Let $G$ be a graph such that each block of $G$ is either a cycle or a clique. Let $c'(G)$ denote the number of maximal cliques of $G$, except the edges of any cycle of length $k\geq 4$ in $G$, and let $c_k(G)$ denote the number of cycles of $G$ of length $k$. Then $\reg(S/J_G) \leq c'(G)+\sum_{k\geq 4}(k-2)c_k(G)$. 
\end{theorem}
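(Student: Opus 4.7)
\medskip

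\noindent\textbf{Proof plan for \Cref{JS-cac}.}
The plan is to proceed by induction on the total cycle weight $w(G):=\sum_{k\ge 4}(k-2)\,c_k(G)$. First I would reduce to the connected case via \Cref{tensor-res}, since both the regularity of $S/J_G$ and the bound $c'(G)+w(G)$ are additive over connected components of $G$. For the base case $w(G)=0$, every block of $G$ is an edge or a clique, so $G$ is a block graph and in particular chordal; the chordal regularity bound of Rouzbahani Malayeri--Saeedi Madani--Kiani then gives $\reg(S/J_G)\le \mathcal{C}(G)=c'(G)$, completing this case.

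For the inductive step, take a connected $G$ with a cycle block $C$ of length $k\ge 4$. If $G=C$, I would invoke the known formula $\reg(S/J_{C_k})=k-2$ (Zafar--Zahid / Schenzel--Zafar), which matches $c'(C_k)+w(C_k)=k-2$. Otherwise every vertex of $C$ is internal (for $k\ge 4$ its two incident cycle edges are distinct maximal cliques), so I pick an internal vertex $v\in V(C)$ and apply Ohtani's Lemma (\Cref{oh-lem}) to obtain the short exact sequence \eqref{ses} with $m=2$. \Cref{reg-lemma} then yields
\[
\reg(S/J_G)\le \max\bigl\{\reg(S/J_{G\setminus v}),\ \reg(S/J_{G_v}),\ \reg(S/J_{G_v\setminus v})+1\bigr\}.
\]
The two end terms I would handle by a bookkeeping argument. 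In $G\setminus v$ the cycle $C$ opens into a path of length $k-2$ whose edges are new maximal cliques, giving $c'(G\setminus v)+w(G\setminus v)=c'(G)+w(G)$ while $w$ strictly decreases, so induction applies. In $G_v\setminus v$ the cycle $C$ is replaced by $C_{k-1}$ (absorbed as a triangle-clique block when $k=4$), and a direct count yields $c'(G_v\setminus v)+w(G_v\setminus v)=c'(G)+w(G)-1$, so induction bounds the ``$+1$'' term by $c'(G)+w(G)$ as well.

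The main obstacle is the middle term $\reg(S/J_{G_v})$, because adding the chord $\{a,b\}$ between $v$'s two cycle-neighbors fuses $C$ and the triangle $\{v,a,b\}$ into a single block that is neither a clique nor a cycle, so $G_v$ leaves the inductive class. The cleanest route I see is to sidestep this by proving a cut-vertex additivity lemma instead: if $v$ is a cut vertex of $G$ with $G=G_1\cup G_2$ and $G_1\cap G_2=\{v\}$ (so $J_G=J_{G_1}+J_{G_2}$ in a common polynomial ring), then $\reg(S/J_G)\le \reg(S_1/J_{G_1})+\reg(S_2/J_{G_2})$. Iterating such an additivity at every cut vertex of $G$ decomposes the bound into a sum over blocks, with each clique or edge block contributing $1$ (exactly reproducing $c'(G)$) and each $C_k$ with $k\ge 4$ contributing $k-2$ (by the cycle formula), yielding precisely the stated bound. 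Establishing this cut-vertex inequality via a refined Ohtani argument at $v$, and checking that the block-by-block sum collapses to $c'(G)+\sum_{k\ge 4}(k-2)c_k(G)$, is what I expect to be the real technical heart of the proof.
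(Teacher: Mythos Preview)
The survey paper does not actually prove this theorem; it only cites it from \cite{JS22}. So there is no proof in the paper to compare your proposal against.

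Assessing your plan on its own merits: the pivot to block-wise subadditivity is the right structural idea. Summing $\reg(S/J_B)$ over the blocks $B$ of $G$---with $\reg(S/J_{K_n})=1$ and $\reg(S/J_{C_k})=k-2$ for $k\ge 4$---reproduces exactly $c'(G)+\sum_{k\ge 4}(k-2)c_k(G)$, and your bookkeeping for the $G\setminus v$ and $G_v\setminus v$ terms in the first approach is correct. The gap is that you have identified, but not established, the inequality $\reg(S/J_{G_1\cup_v G_2})\le \reg(S/J_{G_1})+\reg(S/J_{G_2})$ at a cut vertex~$v$, and the ``refined Ohtani argument at $v$'' you propose runs into essentially the same obstruction you already flagged: in $G_v$ the set $N_G[v]$ becomes a clique, which fuses the two sides and destroys the cut-vertex structure, so you cannot recurse into $G_1$ and $G_2$ separately; and $G_v\setminus v$ likewise has the two pieces joined through the clique on $N_G(v)$. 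This subadditivity does \emph{not} follow from any general fact about $\reg(R/(I+J))$, so it genuinely needs its own argument. Either you must supply an independent proof of cut-vertex subadditivity, or you must find a direct bound on $\reg(S/J_{G_v})$ that tolerates the ``cycle with one chord'' block---for instance by enlarging the inductive class slightly, or by applying Ohtani a second time at one of the new triangle vertices. Until one of these is carried out, the proof is incomplete at exactly the point you yourself describe as its technical heart.
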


\Cref{K-gbg} and \Cref{JS-cac} present the best-known upper bounds for the regularity of binomial edge ideals associated with generalized block graphs and cactus graphs, respectively, in the existing literature. It is natural to explore whether these findings can be applied to generalized binomial edge ideals. This leads to the following two questions:
\begin{question}
    What is the most effective invariant that provides the best possible upper bound on the regularity of the generalized binomial edge ideals of cactus graphs?
\end{question}
\begin{question}
    What is the most effective invariant that provides the best possible upper bound on the regularity of the generalized binomial edge ideals of generalized block graphs?
\end{question}

Now, we end this section by discussing the Matsuda and Murai conjecture. 

\begin{conjecture} \cite[Conjecture 3.10]{MM}
Let $G \in \GG$. Then, $\reg(S/J_G) = n(G) - 1$ if and only if $G$ is a path graph on $n(G)$ vertices. \end{conjecture}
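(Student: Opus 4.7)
The plan is to prove the biconditional separately in each direction. For the easy ``if'' direction, if $G=P_n$ is the path on $n$ vertices, then $P_n$ is closed with $\mathcal{C}(P_n)=n-1$ (one maximal clique per edge), so the Saeedi Madani--Kiani bound gives $\reg(S/J_{P_n}) \le n-1$; the matching lower bound follows from the standard Matsuda--Murai lower bound $\reg(S/J_G) \ge \ell(G)$, where $\ell(G)$ denotes the length of the longest induced path, applied to $P_n$ itself.

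For the converse, assume $\reg(S/J_G) = n(G)-1$. I would successively eliminate structural possibilities. First, $G$ must be connected, else \Cref{first-gen-up} yields $\reg(S/J_G) \le n(G)-c(G) \le n(G)-2$. Second, $G$ must be triangle-free, else $\omega(G)\ge 3$ and \Cref{reg-up-gamma} (with $m=2$) yields $\reg(S/J_G) \le n(G)+\gamma_2(G) = n(G)-(\omega(G)-1) \le n(G)-2$. With these reductions in hand, I induct on $\iv(G)$. The base case $\iv(G)=0$, combined with connectedness and triangle-freeness, forces $G \in \{K_1, K_2\}$, both paths.

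For the inductive step, let $v$ be an internal vertex of $G$ and apply the short exact sequence \eqref{ses} with $m=2$ together with \Cref{reg-lemma}. Since $G$ is triangle-free and $v$ is internal, the neighbors of $v$ are pairwise non-adjacent, so $G_v$ contains a clique on $\{v\}\cup N_G(v)$ of size at least $3$; hence $\omega(G_v)\ge 3$ and \Cref{reg-up-gamma} gives $\reg(S/J_{G_v}) \le n(G)-2$. Also $\reg(S_v/J_{G\setminus v}) \le n(G)-1-c(G\setminus v) \le n(G)-2$ by \Cref{first-gen-up}. Hence for $\reg(S/J_G)=n(G)-1$ to hold, the third term of \Cref{reg-lemma} must force it, i.e.\ $\reg(S_v/J_{G_v\setminus v})+1 = n(G)-1$, so $\reg(S_v/J_{G_v\setminus v}) = n(G_v\setminus v)-1$. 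Since $\iv(G_v\setminus v)<\iv(G)$ by \Cref{iv-lem}, the inductive hypothesis applied to $G_v\setminus v$ (which must also be connected and triangle-free, else the bound would drop further) forces $G_v\setminus v$ to be a path. A degree comparison then pins down $\deg_G(v)=2$ and forces the maximum degree of $G$ to be $2$, leaving only paths and triangle-free cycles; the cycles are excluded by the known identity $\reg(S/J_{C_k}) = k-2$ for $k\ge 4$, completing the induction.

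The main obstacle is controlling the third term: when $N_G(v)=\{u_1,u_2\}$ share a common neighbor $w\ne v$, the edge $\{u_1,u_2\}$ introduced in $G_v\setminus v$ creates a triangle there, obstructing the direct application of the triangle-free inductive hypothesis. The configuration $v, u_1, w, u_2$ is an induced $C_4$ in $G$. The cleanest remedy is to insert an additional reduction step that excludes induced $4$-cycles in $G$ prior to the main induction, using a separate SES argument at a vertex of the $C_4$ together with the cycle formula above; once induced $C_4$'s are ruled out, the final step proceeds without incident.
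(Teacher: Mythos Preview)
Your approach is essentially correct, and it parallels the paper's proof closely: both reduce via \Cref{reg-up-gamma} to the connected, triangle-free case, then run the Ohtani short exact sequence at an internal vertex. But two remarks are in order.

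First, the ``obstacle'' you flag is illusory. Your inductive hypothesis is the full statement ``$\reg(S/J_H)=n(H)-1$ implies $H$ is a path''; it does not presuppose that $H$ is triangle-free. You have already forced $\reg(S_v/J_{G_v\setminus v}) = n(G_v\setminus v)-1$, so the inductive hypothesis applies to $G_v\setminus v$ with no extra assumption. Indeed, if $G_v\setminus v$ contained a triangle, the very same $\gamma_2$ bound you used on $G$ would give $\reg(S_v/J_{G_v\setminus v}) \le n(G_v\setminus v)-2$, contradicting the equality you just established. So no separate elimination of induced $C_4$'s is needed.

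Second, the paper's proof sidesteps even this mild detour by choosing $v$ to be a vertex of \emph{maximal} degree rather than an arbitrary internal vertex. If $\deg_G(v)\ge 3$, then $\omega(G_v)\ge \deg_G(v)+1\ge 4$ and hence $\omega(G_v\setminus v)\ge 3$; now \Cref{reg-up-gamma} bounds the third term of the sequence by $n(G)-3$ directly, so all three terms are at most $n(G)-2$ and no appeal to the inductive hypothesis on $G_v\setminus v$ is required. If instead $\deg_G(v)=2$, maximality forces every vertex to have degree at most $2$, so $G$ is a cycle and one finishes with the Zafar--Zahid computation. Your route works, but the maximal-degree choice buys a shorter argument with no structural reconstruction of $G$ from $G_v\setminus v$.
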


As mentioned in the previous section, Matsuda and Murai proved the `if part'. Initial attempts to prove the `only if part' of the conjecture were made by Zafar and Zahid \cite{ZZ13}, who proved that cycle of length $n$ has regularity $n-2$, giving an evidence for this conjecture. In 2015, Ene and Zarojanu \cite{EZ} proved the conjecture for block graphs. Kiani and Saeedi Madani \cite{KMJCTA}, in 2015, established this conjecture affirmatively. We present here a simple proof of this result using the techniques we have developed in the previous section: 

\begin{theorem}\label{mm-conj}\cite[Theorem 3.2]{KMJCTA}
Let $G \in \GG$. If $G$ is not a path graph on $n(G)$ vertices, then $\reg(S/J_G) \le n(G)-2.$ 
\end{theorem}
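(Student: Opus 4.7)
The plan is to construct a $(\reg,2)$-compatible map $\psi\colon \GG\to\NN$ with the property that $\psi(G)\le n(G)-2$ for every $G$ that is not a single connected path on $n(G)$ vertices. Applying \Cref{ubT} with $m=2$ will then immediately yield $\reg(S/J_G)\le \psi(G)\le n(G)-2$.

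I would define $\psi(G) := n(G)-c(G)-p(G)$, where $p(G)$ denotes the number of connected components of $G$ that are not paths (with isolated vertices counted as paths). A direct inspection shows $\psi(G)\le n(G)-2$ exactly when $G$ is not a single connected path on $n(G)$ vertices: if $G$ is disconnected then $c(G)\ge 2$, and if $G$ is connected but not a path then $p(G)\ge 1$. Condition (a) of \Cref{def-com} is immediate, since removing isolated vertices decreases $n(G)$ and $c(G)$ by the same amount without changing $p(G)$. For condition (b), a short computation suffices: if $G=\bigsqcup_{i=1}^{c(G)} K_{n_i}$ with each $n_i\ge 2$, then each $K_{n_i}$ with $n_i\ge 3$ is a non-path and contributes an additional $n_i-2\ge 1$ to $\psi(G)$ beyond its contribution to $c(G)$, so $\psi(G)\ge c(G)$.

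The bulk of the work is condition (c). Since $\psi$ is additive over connected components, I would reduce to the case where $G$ is connected with at least one internal vertex, and split into three subcases. If $G=P_n$ with $n\ge 3$, any interior vertex $v$ makes $G_v$ acquire a triangle, so $\psi(G_v)=n-2<n-1=\psi(G)$, while $G\setminus v$ is a disjoint union of shorter paths with $\psi(G\setminus v)\le \psi(G)$; the first option of (c) holds. If $G=C_n$ with $n\ge 4$, any vertex $v$ gives $G_v$ a single chord (so $\psi(G_v)=n-2=\psi(G)$), while $G_v\setminus v\cong C_{n-1}$ gives $\psi(G_v\setminus v)=n-3<\psi(G)$, so the second option holds. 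Otherwise $G$ is connected, not a path, not a cycle, and not complete; I claim $G$ has an internal vertex $v$ of degree at least $3$. For such a $v$, the graph $G_v\setminus v$ contains the clique on $N_G(v)$, of size $\ge 3$ and hence containing a triangle, forcing $p(G_v\setminus v)\ge 1$ and $\psi(G_v\setminus v)\le n-3<\psi(G)=n-2$. Moreover $G_v$ is connected with strictly more than $n-1$ edges, hence not a path, so $\psi(G_v)=n-2=\psi(G)$; the second option again holds.

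The main obstacle will be the structural claim used in the third subcase. To prove it, I would pick any $w\in V(G)$ with $\deg_G(w)\ge 3$, whose existence is forced by $G$ being neither a path nor a cycle. If $w$ is internal, we are done; otherwise $N_G(w)\cup\{w\}$ is a maximal clique of size $\ge 4$, and since $G$ is connected but not complete, some $u\in N_G(w)$ has a neighbor $x\notin N_G(w)\cup\{w\}$. Then $u$ lies in the maximal clique $N_G(w)\cup\{w\}$ and in a distinct maximal clique containing the edge $\{u,x\}$, so $u$ is internal and $\deg_G(u)\ge \deg_G(w)+1\ge 4$. With the $(\reg,2)$-compatibility of $\psi$ established, \Cref{ubT} delivers $\reg(S/J_G)\le \psi(G)\le n(G)-2$.
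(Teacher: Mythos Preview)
Your argument is correct, and it provides a genuinely different route from the paper's own proof. A couple of minor points: in the cycle case and in your third subcase you do not explicitly verify $\psi(G\setminus v)\le \psi(G)$, but this is immediate since $\psi(G\setminus v)=(n-1)-c(G\setminus v)-p(G\setminus v)\le n-2=\psi(G)$. You should also remark once that $\psi$ indeed lands in $\NN$, which follows since any non-path component has at least three vertices.

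The paper proceeds differently. It first invokes the already-established bound $\reg(S/J_G)\le n(G)+\gamma_2(G)$ from \Cref{reg-up-gamma}, which dispatches every graph that is disconnected or contains a triangle (since then $\gamma_2(G)\le -2$). Only the connected triangle-free case remains; there the paper takes a vertex of maximal degree, runs the Ohtani short exact sequence directly when the degree is at least $3$, and cites the Zafar--Zahid computation $\reg(S/J_{C_n})=n-2$ for the cycle case. Your approach, by contrast, packages everything into a single new $(\reg,2)$-compatible map $\psi(G)=n(G)-c(G)-p(G)$ and appeals once to \Cref{ubT}. This is more self-contained---in particular you recover the cycle case without invoking \cite{ZZ13}---and your map is an additional example of a $(\reg,m)$-compatible map, partially addressing the question at the end of \Cref{sec:upper-bound}. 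The price is a slightly longer case analysis in verifying condition~(c), whereas the paper's route reuses prior machinery (namely $\gamma_2$) to eliminate most cases up front.
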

\begin{proof}
It follows from \Cref{reg-up-gamma}  that $\reg(S/J_G) \le n(G)+\gamma_2(G).$ Suppose $\gamma_2(G) \le -2$, then $\reg(S/J_G) \le n(G)+\gamma_2(G) \le n(G)-2,$ and we are done. So, we assume that $\gamma_2(G)=-1$, i.e., $G$ is a connected graph and $\omega(G)=2$. We proceed by induction on $\iv(G)$. Suppose $\iv(G)=0$, then $G$ is an edge, a contradiction to the assumption that  $G$ is not a path. 

Assume now that $\iv(G) \ge 1$. Let $v$ be a vertex of maximal degree. Since $\omega(G)=2$ and $G$ is not a path graph, $v$ has to be an internal vertex. Then, due to \Cref{oh-lem}, we have the following short exact sequence: \[0 \to \frac{S}{J_G} \to \frac{S}{J_{G_v}} \oplus \frac{S}{(x_v,y_v)+J_{G\setminus v}} \to  \frac{S}{(x_v,y_v)+J_{G_v\setminus v}} \to 0.\]

Now, we have two cases, $\deg_G(v) \ge 3$ or $\deg_G(v)=2.$ Suppose $\deg_G(v)\ge 3$. Then, $N_G(v)\cup \{v\}$ is a clique in $G_v$, and therefore, $\omega(G_v)\ge |N_G(v)\cup\{v\}| = \deg_G(v)+1\ge 4.$ This implies that $\omega(G_v\setminus v) \ge 3.$ Thus, using \Cref{reg-up-gamma}, $\reg(S/J_{G_v}) \le n(G)+\gamma_2(G_v)= n(G)-\omega(G_v)+1 \le n(G)-3$ and $\reg(S/((x_v,y_v)+J_{G_v\setminus v})) \le n(G_v\setminus v)+\gamma_2(G_v\setminus v)=n(G)-1-\omega(G_v\setminus v)+1 \le n(G)-3.$  Since $G\setminus v$ is a graph on $n(G)-1$ vertices, by using \Cref{first-gen-up}, we get that $\reg(S/((x_v,y_v)+J_{G\setminus v})) \le n(G)-1-c(G\setminus v) \le n(G)-2.$ Thus, applying \Cref{reg-lemma}, we get that $\reg(S/J_G) \le n(G)-2.$

Next, suppose that \(\deg_G(v) = 2\). Since \(v\) is a vertex of maximal degree and \(G\) is not a path graph, we deduce that \(G\) must be a cycle graph. In this case, it is known from \cite{ZZ13} that \(\reg(S/J_G) = n(G) - 2\). Hence, in every scenario, we have \(\reg(S/J_G) \leq n(G) - 2\).
\end{proof}

It is natural to consider the analog of the Matsuda and Murai conjecture for generalized binomial edge ideals when \( m \geq 2 \). We raised this question in Section 3; refer to Question \ref{ques-gen-mm} for more details.

\section{Lower bounds and Equalities}\label{sec:lower-bound}

%\textcolor{red}{A thought about this section: Shall we move this section on lower bounds and equalities before the upper bound section. In the case of lower bound and equalities, we do not really have a nice way to conclude the article. In the other one, we can conclude with some of the new results. If we can pose 4-5 open questions, we can have one section on that. Or else, we can add it within other sections. I would like to have a separate section, but I can't think of good questions on regularities of BEI, other than on powers. Even if we can pose good questions on generalized BEI, that would be good.}
Obtaining a precise expression for the regularity of homogeneous ideals in terms of the data associated with the ideal is always a challenging problem. In a general setting, what one expects is a good bound, either lower or upper. By ``good'', we mean that this should be achieved by a large class of ideals. It is almost impossible to get a precise expression for the regularity of the binomial edge ideal of a generic graph in terms of the combinatorial data associated with the graph. So, researchers have been trying to obtain efficient upper or lower bounds by restricting their attention to special classes of graphs. In this section, we discuss some of the important lower bounds available in the literature along with some precise expressions for the regularity for some specific classes of graphs. 

Matsuda and Murai proved that if $H$ is an induced subgraph of $G$, then by restricting the minimal free resolution of $J_G$ to the variables associated with $V(H)$, one obtains a minimal free resolution of $J_H$. As an immediate consequence, they obtained $\beta_{ij}(S/I(H)) \leq \beta_{ij}(S/I(G))$ for all $i, j \geq 0$ which yielded a lower bound for the regularity:

\begin{corollary}\cite[Corollary 2.3]{MM} \label{lowerbound:length}
If $G$ is a graph, then $\ell(G) \leq \reg(S/J_G)$, where $\ell(G)$ denotes the length of a longest induced path in $G$.    
\end{corollary}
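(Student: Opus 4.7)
The plan is to apply the Matsuda--Murai restriction principle (recalled in the paragraph preceding the statement) to an induced path of length $\ell(G)$, and then to compute the regularity of the binomial edge ideal of that path directly.

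First, fix an induced path $P \subseteq G$ of length $\ell := \ell(G)$, so that $|V(P)| = \ell + 1$. Let $S_P$ denote the polynomial subring of $S$ on the variables indexed by $V(P)$. Because restricting the minimal free resolution of $J_G$ to the variables associated with $V(P)$ yields a minimal free resolution of $J_P$ over $S_P$, we have $\beta_{ij}(S_P/J_P) \le \beta_{ij}(S/J_G)$ for all $i, j \ge 0$. Taking the maximum of $j - i$ over nonzero Betti numbers immediately gives $\reg(S_P/J_P) \le \reg(S/J_G)$.

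It remains to show that $\reg(S_P/J_P) = \ell$. Write $n = \ell + 1$; the ideal $J_P$ is minimally generated by the $n - 1$ quadrics $f_i = x_i y_{i+1} - x_{i+1} y_i$, one per edge of $P$. Choose the lexicographic order on $S_P$ with $x_1 > \cdots > x_n > y_1 > \cdots > y_n$: the leading terms $\ini(f_i) = x_i y_{i+1}$ are pairwise coprime monomials, so $\ini(J_P)$ is a monomial complete intersection of height $n - 1$. Since $J_P$ and $\ini(J_P)$ share the same Hilbert function, $\operatorname{height}(J_P) = n - 1$, matching the number of minimal generators. Hence $J_P$ itself is a complete intersection of $n - 1$ quadrics, and the standard regularity formula yields
\[
\reg(S_P/J_P) = \sum_{i=1}^{n-1}(2-1) = n - 1 = \ell.
\]

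Combining the two steps, $\ell(G) = \reg(S_P/J_P) \le \reg(S/J_G)$, as asserted. The only non-routine input is the Matsuda--Murai restriction statement itself, whose verification amounts to checking that the differentials of the minimal free resolution of $J_G$ involve no scalar entries in the variables outside $V(P)$; the rest is a standard complete intersection computation. I do not foresee any serious obstacle beyond invoking these two facts correctly.
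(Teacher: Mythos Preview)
Your proposal is correct and follows essentially the same route as the paper: invoke the Matsuda--Murai restriction principle to obtain $\beta_{ij}(S_P/J_P)\le\beta_{ij}(S/J_G)$ for an induced path $P$ of maximal length, and then use that the path has regularity equal to its length. The paper simply cites the latter fact, whereas you supply a clean self-contained argument (pairwise coprime initial terms force $J_P$ to be a complete intersection of $\ell$ quadrics); this is a pleasant elaboration but not a different strategy.
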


There are several classes of graphs for which this lower bound is achieved. Chaudhry, Dokuyuchu and Irfan studied the regularity of block graphs \cite{FC}. They proved:
\begin{theorem}\cite[Theorem  3.4]{FC}
Let $G$ be a connected graph on the vertex set $[n]$ which consists of
\begin{enumerate}
    \item[(i)] a sequence of maximal cliques $F_1,\dots, F_\ell$ with $|V(F_i)| \geq 2$ for all $i$ such that $|V(F_i)\cap V(F_{i+1})| = 1$ for $1 \leq i \leq \ell - 1$ and $V(F_i) \cap V(F_j) = \emptyset$ for any $i < j$ such that $j  \neq i + 1$, together with
    \item[(ii)] some additional edges of the form $F = \{j, k\}$ where $j$ is an intersection point of two consecutive cliques $F_i, F_{i+1}$ for some $1 \leq i \leq \ell - 1$, and $k$ is a vertex of degree $1$.
\end{enumerate}
Then $\reg(S/J_G) = \reg(S/\ini(J_G)) = \ell(G)$.
\end{theorem}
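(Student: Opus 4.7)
The plan is to sandwich $\reg(S/J_G)$ between $\ell(G)=\ell$ from below (via the longest-induced-path bound of \Cref{lowerbound:length}) and $\ell$ from above (via Kumar's generalized block graph bound \Cref{K-gbg}), and then deduce the initial-ideal equality from the squarefreeness of $\ini(J_G)$. Throughout, write $v_i$ for the unique vertex of $V(F_i)\cap V(F_{i+1})$ and $p=\text{pv}(G)$ for the number of pendants of $G$.

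I would first show $\ell(G)=\ell$. Picking $u_0 \in V(F_1)\setminus\{v_1\}$ and $u_\ell \in V(F_\ell)\setminus\{v_{\ell-1}\}$ (possible because $|V(F_i)|\ge 2$), the sequence $P \colon u_0, v_1, \ldots, v_{\ell-1}, u_\ell$ is an induced path of length $\ell$: consecutive vertices share a clique $F_i$ and hence are adjacent, while non-consecutive vertices lie in distinct $F_i, F_j$ with $|i-j|\ge 2$ and hence are non-adjacent by condition (i). This already gives $\ell(G)\ge \ell$. Conversely, any induced path must traverse the sequence of blocks $F_1, F_2, \ldots, F_\ell$ monotonically, since revisiting a clique would force repetition of the cut vertex $v_{i-1}$ or $v_i$. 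Within each $F_i$ only two path vertices are allowed (else a chord inside the clique), and a pendant $k$ can appear only as an endpoint of the path (because $\deg_G(k)=1$), in which case it merely substitutes for a clique endpoint at the incident intersection vertex without extending the length. Hence $\ell(G)=\ell$, and \Cref{lowerbound:length} gives $\reg(S/J_G)\ge \ell$.

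For the upper bound I would invoke \Cref{K-gbg}, which applies because the blocks of $G$ are exactly the cliques $F_1,\ldots,F_\ell$ and the pendant edges from (ii), each of which is complete, so $G$ is a block graph and in particular a generalized block graph. I need to evaluate $\mathcal{C}(G)+\alpha(G)-\text{pv}(G)$. Every pendant edge is a maximal clique (its degree-one endpoint lies in no other clique), the $F_i$ are maximal by assumption, and no other maximal cliques arise since any two vertices from $F_i$ and $F_j$ with $|i-j|\ge 2$ are non-adjacent; therefore $\mathcal{C}(G)=\ell+p$. For $\alpha(G)$, the only vertices $v$ with $\text{pdeg}_G(v)\ge 1$ are intersection vertices $j$ carrying at least one pendant, and such $j$ lies in the two cliques $F_i, F_{i+1}$ flanking it together with one pendant edge per attached pendant, so $\text{cdeg}_G(j)=\text{pdeg}_G(j)+2$; thus $j$ is of type $2$, not type $1$. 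Hence $\alpha(G)=0$, and the bound collapses to $\reg(S/J_G)\le \ell + p + 0 - p = \ell$. Combined with the lower bound, this yields $\reg(S/J_G) = \ell = \ell(G)$.

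The initial-ideal equality is obtained by appealing to the classical fact that, in the lexicographic order with $x_1>\cdots>x_n>y_1>\cdots>y_n$, the initial ideal $\ini(J_G)$ is a squarefree monomial ideal; Conca--Varbaro's theorem on squarefree Gr\"obner degenerations then forces the extremal Betti numbers (and in particular the regularities) of $J_G$ and $\ini(J_G)$ to coincide, giving $\reg(S/\ini(J_G))=\reg(S/J_G)=\ell(G)$. The main obstacle is the upper bound $\ell(G)\le \ell$ in the first step; although intuitive, it requires careful bookkeeping in the presence of multiple pendants at a single intersection vertex and of paths that could conceivably zig-zag across several cliques. A clean resolution is induction on $\ell$: deleting $F_1$ together with all pendants at $v_1$ produces a graph of the same shape with $\ell-1$ cliques, and any induced path of $G$ either avoids the deleted vertices or uses at most one edge into them at an endpoint.
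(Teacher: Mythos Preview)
The survey does not supply its own proof of this statement; it is quoted verbatim from \cite{FC} in \Cref{sec:lower-bound}. Your argument is therefore independent, and its route---lower bound via \Cref{lowerbound:length}, upper bound via \Cref{K-gbg}, then Conca--Varbaro for the initial-ideal equality---relies on machinery developed well after \cite{FC}, so it is necessarily a different (and more streamlined) argument than the original.

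The strategy is sound, but your evaluation of the right-hand side of \Cref{K-gbg} breaks when an end clique has exactly two vertices. If $|V(F_1)|=2$, say $F_1=\{a,v_1\}$, then $F_1$ is itself a pendant edge, so the count $\mathcal{C}(G)=\ell+p$ double-counts it; simultaneously $a$ is a pendant adjacent to $v_1$, giving $\text{cdeg}_G(v_1)=\text{pdeg}_G(v_1)+1$, so $v_1$ is of type~$1$ and $\alpha(G)\ge 1$, contrary to your claim $\alpha(G)=0$. For $\ell\ge 3$ the two errors happen to cancel and $\mathcal{C}(G)+\alpha(G)-\text{pv}(G)=\ell$ still holds, but in the extreme case $\ell=2$ with $|V(F_1)|=|V(F_2)|=2$ (so $G$ is a star centred at $v_1$) one gets $\text{cdeg}_G(v_1)=\text{pdeg}_G(v_1)$; then $v_1$ is of neither type, $\alpha(G)=0$, and the formula of \Cref{K-gbg} as recorded in the survey collapses to $0$, which is absurd since $\reg(S/J_G)=2$. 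Either the survey's paraphrase of \cite{AR2} omits a hypothesis, or these boundary cases must be handled separately; you should locate the precise statement in \cite{AR2} before resting the upper bound on it, or alternatively dispose of the cases $|V(F_1)|=2$ and $|V(F_\ell)|=2$ directly.
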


For trees, they characterized when equality occurs in the above inequality:
\begin{theorem}\cite[Theorem 4.1]{FC}
Let $T$ be a tree on $[n]$. Then $\reg(S/J_T) = \ell(T)$ if and only if $T$ is a caterpillar.
\end{theorem}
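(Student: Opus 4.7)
The plan is to prove both implications; the ``if'' direction follows by combining known bounds, while the ``only if'' direction is the technical heart requiring induction.

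For the ``if'' direction, suppose $T$ is a caterpillar with spine $P$ of length $\ell$. Since a caterpillar is a lobster with zero limbs, the Jayanthan--Narayanan--Raghavendra Rao upper bound stated earlier gives $\reg(S/J_T) \leq \ell + 2\cdot 0 = \ell$. A short structural check shows that $\ell(T) = \ell$: any induced path in $T$ is either a sub-path of $P$ or uses whiskers at its endpoints, and maximality of $P$ among paths in $T$ forces its length to stay at most $\ell$. Combined with the Matsuda--Murai lower bound $\ell(T) \leq \reg(S/J_T)$ from \Cref{lowerbound:length}, we obtain $\reg(S/J_T) = \ell = \ell(T)$.

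For the ``only if'' direction, I argue the contrapositive: if $T$ is not a caterpillar then $\reg(S/J_T) \geq \ell(T) + 1$. Start with the structural observation that the leaf-pruned tree $T^{*} := T \setminus \{\text{leaves}\}$ is not a path, hence contains a vertex of degree at least $3$; in $T$ this exhibits an induced spider $T_3$ (a central vertex with three arms of length $2$). Since $\ell(T) \geq \ell(T_3) = 4$, the base case of induction is $\ell(T) = 4$, where a direct computation (either via Macaulay2 or via Ohtani's decomposition at the center of $T_3$) gives $\reg(S/J_{T_3}) = 5$. Matsuda--Murai monotonicity of Betti numbers under induced subgraphs then yields $\reg(S/J_T) \geq \reg(S/J_{T_3}) = 5 > \ell(T)$.

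For the inductive step with $\ell(T) > 4$, choose a strategically placed internal vertex $v$ --- for instance, a vertex of degree at least $3$ in $T^{*}$ that is as far as possible from a longest induced path --- and apply Ohtani's decomposition (\Cref{oh-lem}) to form the short exact sequence
\[
0 \to \frac{S}{J_T} \to \frac{S}{J_{T_v}} \oplus \frac{S}{(x_v, y_v) + J_{T \setminus v}} \to \frac{S}{(x_v, y_v) + J_{T_v \setminus v}} \to 0.
\]
Using \Cref{tensor-res} to compute the regularities of the two $(x_v,y_v)$-augmented quotients as sums over the connected components of $T \setminus v$, the inductive hypothesis applied to a non-caterpillar component should contribute the desired $+1$ gap, which \Cref{reg-lemma} then propagates to $\reg(S/J_T)$. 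The main obstacle is the choice of $v$: one must guarantee that at least one of $T_v$, $T \setminus v$, $T_v \setminus v$ remains a non-caterpillar whose $\ell(-)$ invariant tracks $\ell(T)$ closely enough that the inductive estimate survives the sandwich in \Cref{reg-lemma}. Selecting $v$ near a deepest branching point of $T^{*}$ and splitting into cases based on whether $v$ lies on a longest induced path should keep the bookkeeping tractable.
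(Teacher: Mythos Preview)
This theorem is only cited in the paper (from \cite{FC}); no proof is given here, so there is no in-paper argument to compare against directly. I therefore assess your proposal on its own merits.

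Your ``if'' direction is correct; it is also an immediate consequence of the result cited just before (\cite[Theorem~3.4]{FC}), since a caterpillar is precisely a chain of edge-cliques with whiskers attached at the intersection points.

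Your ``only if'' direction has a genuine gap. The Ohtani short exact sequence, together with \Cref{reg-lemma}, is tailored to produce \emph{upper} bounds on $\reg(S/J_T)$; extracting a lower bound requires knowing that the regularities of the middle and right-hand terms differ, and you give no mechanism for this. More concretely, your inductive hypothesis is about non-caterpillar \emph{trees}, yet $T_v$ and $T_v\setminus v$ are not trees (they contain a clique on $N_T(v)$), and the components of $T\setminus v$ can all be caterpillars even when $T$ is not --- e.g.\ when $T$ is a long path with a single length-$2$ branch, the only vertex of degree $\geq 3$ is the branch point, and deleting it leaves three paths. So the induction cannot advance past the base case.

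A direct argument avoids induction entirely and uses only tools already surveyed in this section. If $T$ is not a caterpillar, then some interior vertex of a longest induced path $P$ carries a branch of length at least $2$; let $T''$ be the induced subtree consisting of $P$ together with one such branch truncated to length~$2$. Then $T''$ has $\ell(T)+3$ vertices and exactly three leaves, so $\iv(T'')=\ell(T)$. The lower bound $\reg(S/J_{T''})\geq \iv(T'')+1$ of Jayanthan--Narayanan--Raghavendra Rao (stated a few lines later), combined with the Betti-number monotonicity of Matsuda--Murai, gives $\reg(S/J_T)\geq \reg(S/J_{T''})\geq \ell(T)+1$, which is exactly the contrapositive you want.
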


Ene and Zarojanu showed that for closed graphs, the lower bound of regularity is achieved.

\begin{theorem}\cite[Theorem 2.2]{EZ}
Let $G$ be a connected closed graph. Then $$\reg(S/J_G) = \reg(S/\ini(J_G)) = \ell(G).$$    
\end{theorem}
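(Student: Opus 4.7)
The strategy is to prove the chain of inequalities
$$\ell(G) \ \le\ \reg(S/J_G)\ \le\ \reg(S/\ini(J_G))\ \le\ \ell(G),$$
which forces equality throughout. The first inequality is \Cref{lowerbound:length}, and the second is the standard upper semicontinuity of regularity under flat Gr\"obner degenerations. The heart of the proof is therefore the third inequality $\reg(S/\ini(J_G)) \leq \ell(G)$, which I would prove by induction on $n(G)$.

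For a labeling witnessing the closedness of $G$, the theorem of Herzog et al.\ states that the defining binomials already form a Gr\"obner basis, so $\ini(J_G) = (x_iy_j : \{i,j\} \in E(G),\, i<j)$ is a squarefree quadratic monomial ideal. In the inductive step, let $v$ be the vertex with the largest label. By closedness, $N_G(v)$ is a clique of consecutive vertices in $[n-1]$, and the induced subgraph $G\setminus v$ is again closed. I would apply \Cref{reg-lemma} to the short exact sequence
$$0 \to \frac{S}{\ini(J_G) : y_v}(-1) \xrightarrow{\cdot y_v} \frac{S}{\ini(J_G)} \to \frac{S}{\ini(J_G)+(y_v)} \to 0.$$
Direct monomial calculations give $\ini(J_G)+(y_v) = \ini(J_{G\setminus v})+(y_v)$ and $\ini(J_G):y_v = \ini(J_{G\setminus v})+(x_i : i \in N_G(v))$. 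The regularity of the right-hand quotient is therefore at most $\ell(G\setminus v) \leq \ell(G)$ by the inductive hypothesis.

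The main obstacle is controlling the regularity of the colon term by $\ell(G)-1$, so that the degree shift is absorbed. Modulo the regular sequence $\{x_i : i \in N_G(v)\}$, the remaining monomial ideal is generated by those $x_iy_j$ with $\{i,j\} \in E(G\setminus v)$, $i<j$ and $i \notin N_G(v)$, and this is precisely the edge ideal of a bipartite graph whose structure encodes a subgraph $H$ of $G \setminus v$. Bounding the regularity of this edge ideal by $\ell(G)-1$ amounts to the combinatorial statement that the longest induced path in $H$ has length at most $\ell(G)-1$, which in turn would follow by prepending the vertex $v$ (through a suitable neighbor in $N_G(v)$) to a longest induced path in $H$; the closedness of $G$ ensures that the resulting path in $G$ remains induced. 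With this step in place, \Cref{reg-lemma} delivers $\reg(S/\ini(J_G)) \leq \ell(G)$, completing the induction.
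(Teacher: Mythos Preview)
The paper under review is a survey and does not supply its own proof of this theorem; it is quoted verbatim from Ene--Zarojanu. So there is nothing in the present paper to compare your argument against, and your proposal must stand or fall on its own.

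The overall architecture (the chain $\ell(G)\le\reg(S/J_G)\le\reg(S/\ini(J_G))\le\ell(G)$, induction on $n(G)$, and the short exact sequence for multiplication by $y_n$) is reasonable, and your computations of $\ini(J_G)+(y_n)$ and $\ini(J_G):y_n$ are correct. The gap is in the treatment of the colon term. After killing $\{x_i : i\in N_G(n)\}$ you are left with the monomial ideal generated by those $x_iy_j$ with $\{i,j\}\in E(G)$, $i<j<n$, $i\notin N_G(n)$. You assert that bounding its regularity by $\ell(G)-1$ ``amounts to'' bounding $\ell(H)$ for a subgraph $H$ of $G\setminus n$, implicitly invoking the very equality $\reg=\ell$ that you are trying to prove. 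For this to be a valid inductive step you would need the residual ideal to equal $\ini(J_H)$ for some \emph{closed} graph $H$ with fewer vertices. Neither holds in general.

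Take $G$ on $[5]$ with maximal cliques $\{1,2,3\},\{2,3,4\},\{3,4,5\}$; this is closed with $N_G(5)=\{3,4\}$. Modulo $x_3,x_4$ the colon ideal becomes $(x_1y_2,x_1y_3,x_2y_3,x_2y_4)$. The only candidate graph $H$ with these leading terms has edges $\{1,2\},\{1,3\},\{2,3\},\{2,4\}$, but $H$ is \emph{not} closed (it contains $\{2,3\}$ and $\{2,4\}$ but not $\{3,4\}$), so its binomial edge ideal does not have a quadratic Gr\"obner basis and $\ini(J_H)$ is not the displayed monomial ideal. Thus neither your inductive hypothesis nor the equality ``regularity $=$ longest induced path'' is available for this quotient. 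The step ``bounding the regularity of this edge ideal by $\ell(G)-1$ amounts to $\ell(H)\le\ell(G)-1$'' is therefore unjustified; you have reduced to controlling the regularity of a quadratic monomial ideal that is no longer of the form $\ini(J_{H'})$ for a closed $H'$, and a separate argument (for instance, identifying the associated bipartite graph as chordal bipartite and invoking the induced--matching bound, which is closer to the route actually taken in \cite{EZ}) is needed to close the gap.
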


Zafar and Zahid proved that a cycle on $n$ vertices, $C_n$, has $\reg(S/J_{C_n}) = n-2$ by explicitly computing the Betti table, \cite{ZZ13}. Given \Cref{lowerbound:length} and \Cref{mm-conj}, this is now evident. Since $P_{n-1}$ is an induced subgraph of $C_n$, by \cite[Corollary 2.3]{MM}, we have $\reg(S/J_{C_n}) \geq n-2$. Moreover, since $C_n$ is not a path, by \Cref{mm-conj}, it follows that $\reg(S/J_{C_n}) \leq n-2$. 

Sarkar investigated the regularity of cycles that were accompanied by some whiskers. He proved the following:
\begin{theorem}\cite[Corollary 4.10]{Sar21}
If $G$ is a graph obtained by adding finitely many whiskers to the vertices, not necessarily all, of a cycle $C_k$, then $k-1 \leq \reg(S/J_G) \leq k+1$. Further, Let $A = \{v \in V(C_k) : \exists~ \text{a whisker at } v\}$. Then
\begin{enumerate}
    \item $\reg(S/J_G) = k + 1$ if and only if $A = V(C_k)$,
    \item $\reg(S/J_G) = k - 1$ if and only if $|A| = 1$ or $|A| = 2$ and vertices of A are adjacent,
    \item $\reg(S/J_G) = k$ if and only if $A$ contains at least two non-adjacent vertices and $A \subsetneq V(C_k).$
\end{enumerate}
\end{theorem}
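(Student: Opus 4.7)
The plan is to first establish the rough bounds $k-1 \le \reg(S/J_G) \le k+1$, and then refine them by a case analysis via Ohtani's lemma. The lower bound follows immediately from \Cref{lowerbound:length}: fix a whisker vertex $v \in A$ with pendant neighbor $u$, and, writing $v = v_0, v_1, \dots, v_{k-1}$ for the cyclic order on $C_k$, observe that $\{u, v_0, v_1, \dots, v_{k-2}\}$ induces a path of length $k-1$ in $G$, so $\ell(G) \ge k-1$. For the upper bound $\reg(S/J_G) \le k+1$, I would induct on $w = |A|$. The base case $w = 0$ is $G = C_k$ with $\reg = k-2$. For $w \ge 1$, pick $v \in A$; since $v$ lies both in the cycle block and in the whisker edge, it is an internal vertex, and Ohtani's \Cref{oh-lem} produces
\[
0 \to \frac{S}{J_G} \to \frac{S}{J_{G_v}} \oplus \frac{S}{(x_v,y_v) + J_{G \setminus v}} \to \frac{S}{(x_v,y_v) + J_{G_v \setminus v}} \to 0.
\]
Now $G \setminus v$ is a path on $k-1$ cycle vertices carrying at most $w-1$ whiskers (plus an isolated pendant), whose regularity is at most $k-1$ by the caterpillar formula and \Cref{up-vert}. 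The graphs $G_v$ and $G_v \setminus v$ are generalized block graphs whose regularities are bounded by $k$ through \Cref{K-gbg} (combined with induction on $\iv$). Feeding these estimates into \Cref{reg-lemma} yields $\reg(S/J_G) \le k+1$.

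To pin down the three cases, the plan is as follows. In \emph{Case 2} ($|A| = 1$, or $|A| = 2$ with adjacent whiskers), the modified neighborhoods in $G_v$ and $G_v \setminus v$ absorb each whisker into a larger clique, pushing the invariant $\gamma_2$ downward; \Cref{reg-up-gamma} then gives the sharper estimates $\reg(S/J_{G_v}) \le k-2$ and $\reg(S/(J_{G_v\setminus v} + (x_v,y_v))) \le k-2$, while $G \setminus v$ is a caterpillar of regularity $\le k-2$. The short exact sequence then forces $\reg(S/J_G) \le k-1$, matching the lower bound. In \emph{Case 3} ($|A| \ge 2$ with two non-adjacent whiskers and $A \subsetneq V(C_k)$), fix an un-whiskered vertex $v_0 \notin A$ together with two non-adjacent whisker vertices $v_i, v_j \in A$ lying on the arc of $C_k$ avoiding $v_0$; the two whisker edges attached to the induced subpath of $C_k$ from $v_i$ to $v_j$ form an induced path of length $k$, so $\reg(S/J_G) \ge k$. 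The matching upper bound $\reg \le k$ is obtained by applying Ohtani's lemma at $v_0$ (which is an internal vertex after the cycle is broken): removing $v_0$ produces a whiskered path of regularity $\le k-1$ by the caterpillar formula, while the other two terms are bounded by $k$ through \Cref{reg-up-gamma}.

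The main obstacle is \emph{Case 1} ($A = V(C_k)$), where the longest induced path still has length only $k$, so \Cref{lowerbound:length} yields merely $\reg(S/J_G) \ge k$, whereas we must show $\reg(S/J_G) \ge k+1$. The plan is to detect a non-vanishing graded Betti number $\beta_{i,i+k+1}(S/J_G)$ directly by exploiting the primary decomposition of $J_G$ established in \cite{HHHKR}. Concretely, I would choose a minimal prime $P_T(G)$ with $T = \{v_0\}$ a single cycle vertex; the quotient $S/(P_T + J_G)$ corresponds to a whiskered path whose regularity is $k+1$, and the short exact sequence
\[
0 \to \frac{S}{J_G : P_T} \to \frac{S}{J_G} \to \frac{S}{J_G + P_T} \to 0
\]
(together with a showing that the left-hand term has strictly smaller regularity via a further iteration) transfers this $k+1$ back to $\reg(S/J_G)$. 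The delicate point — and the reason none of the general upper bounds in Section 3 tightens in the correct direction — is that this jump from $k$ to $k+1$ is a genuinely global phenomenon of the closed whiskered cycle, invisible to any induced-subgraph technique and requiring an explicit primary-decomposition (or Koszul-complex) analysis.
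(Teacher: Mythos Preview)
The paper does not prove this theorem; it is quoted verbatim from Sarkar's article \cite{Sar21} as part of the survey, with no argument supplied. So there is nothing in the present paper to compare your attempt against, and your proof must stand on its own.

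It does not. The clearest gap is in Case~3: your lower bound $\reg(S/J_G)\ge k$ via an induced path of length $k$ is false in general. Take $C_6$ with a single whisker at each of the antipodal vertices $v_1$ and $v_4$. Then $A=\{v_1,v_4\}$ consists of two non-adjacent vertices and $A\subsetneq V(C_6)$, so we are in Case~3 and the theorem asserts $\reg(S/J_G)=6$. But every induced path in this $G$ has length at most $5$: a path using both pendants must run along one of the two arcs of $C_6$ between $v_1$ and $v_4$, each of length $3$, giving total length $5$; and any path that omits a pendant is blocked at length $5$ by the chord $\{v_1,v_6\}$ (or its symmetric counterpart). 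Thus \Cref{lowerbound:length} yields only $\reg\ge k-1$ here, and the jump to $k$ genuinely requires a non-induced-subgraph argument---in \cite{Sar21} it comes from an explicit Betti-number computation, which none of the bounds in Sections~\ref{sec:upper-bound}--\ref{sec:ub-specific} can substitute for.

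A second gap appears already in your general upper bound $\reg(S/J_G)\le k+1$. After applying Ohtani's lemma at a whisker vertex $v=v_0$, the graph $G_v$ is \emph{not} a generalized block graph: completing $N_G(v_0)$ adds the chord $\{v_1,v_{k-1}\}$, and then $v_1,v_2,\dots,v_{k-1},v_1$ is an induced $(k-1)$-cycle in $G_v$, so for $k\ge 5$ the graph $G_v$ is not even chordal and \Cref{K-gbg} is inapplicable. The same problem recurs in your Case~3 upper bound, where you invoke \Cref{reg-up-gamma} on $G_{v_0}$: since $\omega(G_{v_0})=3$ and $n(G_{v_0})=n(G)=k+|\text{pendants}|$, that bound gives only $k+|\text{pendants}|-2$, not $k$. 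Sarkar's actual argument proceeds by a finer induction that peels off whiskers one at a time and tracks specific graded Betti numbers through the short exact sequences; the general compatible-map machinery of this survey is too coarse to recover either the upper bound $k+1$ or the Case~1 lower bound $k+1$.
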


While the length of the longest induced path serves as a sharp lower bound for the regularity of trees, it is evident that, for a significant class of trees, this bound is not particularly tight. Jayanthan, Narayanan, and Raghavendra Rao studied various classes of block graphs and proposed an improved lower bound for the regularity of trees. They also characterized all trees that achieve this lower bound.

%A vertex of a tree $T$ is said to be an \textit{internal vertex} if it is not a leaf vertex. For a tree $T$, let $\iv(T)$ denote the number of internal vertices of $T$.

\begin{theorem}\cite[Theorems  4.1 \& 4.2]{JNR}
For a tree $T$, $\reg(S/J_T) \geq \iv(T) + 1$. Moreover, the equality is achieved if and only if $T$ does not contain the Jewel graph (see \Cref{jewel}) as a subgraph.
\end{theorem}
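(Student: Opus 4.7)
The plan is to prove the lower bound and the equality characterization separately, with induction on the number of internal vertices $\iv(T)$ as the main organizing tool. For the lower bound, the base case $\iv(T) = 0$ forces $T = K_2$ and gives $\reg(S/J_T) = 1 = \iv(T) + 1$ directly. For the inductive step, I observe that the internal vertices of $T$ themselves induce a non-empty subtree $T^\circ$ of $T$, and I pick $v$ to be a leaf of $T^\circ$, i.e., an internal vertex of $T$ with exactly one internal $T$-neighbor $u$ (all other neighbors of $v$ being $T$-leaves). Applying Ohtani's lemma (\Cref{oh-lem}) at $v$ yields the short exact sequence
$$0 \to S/J_T \to S/J_{T_v} \oplus S/((x_v, y_v) + J_{T \setminus v}) \to S/((x_v, y_v) + J_{T_v \setminus v}) \to 0.$$
Since the non-$u$ neighbors of $v$ are leaves, $T_v$ consists of the subtree $T_1$ (the component of $T \setminus v$ through $u$) with a clique $K_{\deg_T(v)}$ glued at $u$, while $T \setminus v$ is $T_1$ together with some isolated vertices. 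Via \Cref{tensor-res} and the fact that $\reg(S/J_{K_n}) = 1$, the regularities of the middle and right-hand terms in the sequence reduce essentially to $\reg(S_1/J_{T_1}) + c$ for small constants $c$. An application of \Cref{reg-lemma} combined with the inductive hypothesis applied to $T_1$ (for which $\iv(T_1) < \iv(T)$) should then deliver $\reg(S/J_T) \geq \iv(T) + 1$.

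For the forward implication of the equality statement (Jewel subgraph $\Rightarrow$ strict inequality), the key observation is that since the Jewel is a connected tree and $T$ is a tree, any subgraph embedding of the Jewel in $T$ is automatically an induced subtree, because a tree has no cycles in which extra edges could be concealed. Consequently, by the Matsuda--Murai monotonicity of Betti numbers on induced subgraphs that underlies \Cref{lowerbound:length}, $\beta_{i,j}(S/J_{\text{Jewel}}) \le \beta_{i,j}(S/J_T)$ for all $i,j$. After explicitly computing $\reg(S/J_{\text{Jewel}})$ and verifying that it strictly exceeds $\iv(\text{Jewel}) + 1$, a careful comparison of the extremal Betti positions, together with the inductive reduction by peeling off leaves and internal vertices of $T$ lying outside the Jewel, propagates the strict inequality to $\reg(S/J_T) > \iv(T) + 1$.

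For the reverse implication (Jewel-free $\Rightarrow$ equality), the plan is to establish the matching upper bound $\reg(S/J_T) \le \iv(T) + 1$ by induction on $\iv(T)$. The inductive step again invokes \Cref{oh-lem} at a leaf $v$ of $T^\circ$ and \Cref{reg-lemma}, but this time uses the Jewel-freeness of $T$ to control the regularities of the terms involving $T_v$, $T \setminus v$, and $T_v \setminus v$. The heart of the argument is an auxiliary combinatorial lemma ensuring that Jewel-freeness is inherited by the reduced trees, and that the clique introduced in $T_v$ contributes only a bounded amount to the regularity when $T$ is Jewel-free. I expect this to be the main obstacle of the proof: identifying precisely which local configurations around $v$ would force a Jewel in $T$ whenever a larger-than-expected regularity contribution appears, and then ruling these out under the Jewel-free hypothesis. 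This will likely require refining the selection of $v$ beyond merely being a leaf of $T^\circ$, together with a case analysis on $\deg_T(v)$ and the local structure of $T$ at $u$.
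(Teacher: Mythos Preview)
Your lower-bound argument has a genuine gap. The short exact sequence coming from Ohtani's lemma, read through \Cref{reg-lemma}, yields \emph{upper} bounds on $\reg(S/J_T)$; to extract a lower bound you would have to land in case (1) or (2) of \Cref{reg-lemma}, which requires knowing the exact regularities of $S/J_{T_v}$ and $S/J_{T_v\setminus v}$, not just inductive inequalities. Your route to those exact values is also blocked: the clique $N_T(v)\cup\{v\}$ in $T_v$ and the tree $T_1$ share the vertex $u$, so \Cref{tensor-res} does not apply, and the clique-sum additivity of \cite[Theorem 3.1]{JNR} only applies when $u$ has clique degree $2$ in the glued graph, i.e., only when $\deg_T(u)=2$. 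In the generic case $\deg_T(u)\ge 3$ you have no decomposition, and the argument stalls.

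The paper's route is quite different and avoids all of this. It fixes a leaf order $v_1,\dots,v_n$ on $V(T)$ and builds $T$ up one leaf at a time. The key observation is that when $v_{i+1}$ is attached at a vertex that was pendant in $G_i$, the corresponding edge binomial is a \emph{regular element} on $S/J_{G_i}$, so both $\reg$ and $\iv$ increase by exactly one; when the attachment vertex was already internal, $\iv$ is unchanged and $\reg$ cannot decrease by induced-subgraph monotonicity. This single observation simultaneously proves the lower bound and the implication ``$\mathcal{J}\subseteq T\Rightarrow \reg(S/J_T)\ge \iv(T)+2$'', starting from the Macaulay2 computation $\reg(S/J_{\mathcal{J}})=6=\iv(\mathcal{J})+2$. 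For the remaining implication the paper argues the contrapositive: a tree with no degree-$2$ vertex already contains $\mathcal{J}$; otherwise a degree-$2$ vertex lets one write $T$ as a clique sum $T_1*T_2$ along pendant vertices, where $\reg(S/J_T)=\reg(S/J_{T_1})+\reg(S/J_{T_2})$ and $\iv(T)=\iv(T_1)+\iv(T_2)+1$, so strict inequality descends to some $T_i$ and induction finishes. This degree-$2$ splitting is much cleaner than the Ohtani-based upper bound you outline and eliminates the case analysis you were anticipating.
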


\vskip 2mm \noindent
\textbf{Sketch of Proof:} Given a tree $T$, it has a leaf order on its vertices. Using this leaf ordering of the vertices and then inducting on the number of vertices, one can easily prove the lower bound. To prove the characterization for the equality, first it is easy to verify using Macaulay2 \cite{M2} that the regularity of $S/J_{\mathcal{J}} = 6 = iv(\mathcal{J}) + 2$. If $v_1,\ldots, v_n$ is a leaf order on the vertices of a tree $T$ having $\mathcal{J}$ as subgraph, then note that at each stage $G_i$, the induced graph on $v_1,\ldots, v_i$, addition of the next vertex $v_{i+1}$ will increase the number of internal vertices if and only if $v_i$ is a pendant vertex in $G_i$. In such case, the binomial corresponding to $v_{i+1}$ is a regular element on $J_{G_i}$. Therefore regularity and the number of internal vertices increase by one. If $v_i$ is not a pendant vertex in $G_i$, then $iv(G_{i+1}) = iv(G_i)$ and $\reg(S/J_{G_{i+1}}) \geq \reg(S/J_{G_i})$. Hence the result follows by induction.

For the converse, the first one shows that if $T$ does not contain a vertex of degree $2$, then it contains $\mathcal{J}$ as a subgraph. Secondly, one proves that if $T$ contains a vertex of degree $2$, then one can see $T$ as a clique sum of two trees $T_1$ and $T_2$ along a pendant vertex of each of these graphs. By \cite{JNR}, $\reg(S/J_T) = \reg(S/J_{T_1}) + \reg(S/J_{T_2})$ and $\iv(T) = \iv(T_1)+\iv(T_2) + 1$. Since $\reg(S/J_T) \geq iv(T)+2$, $\reg(S/J_{T_i}) \geq \iv(T_i)+2$ for at least one $i\in \{1,2\}$. Applying induction on the number of vertices, $T_i$ and hence $T$ has $\mathcal{J}$ as an induced subgraph.
\qed

\begin{figure}[H]
\begin{tikzpicture}[line cap=round,line join=round,>=triangle 45,x=0.6cm,y=0.6cm]
%\clip(-0.02,-5.00) rectangle (5.74,0.04);
\draw (2.82,-2.52)-- (2.82,-3.86);
\draw (2.82,-2.52)-- (3.86,-2.02);
\draw (2.82,-2.52)-- (1.78,-2.04);
\draw (3.86,-2.02)-- (4.82,-2.52);
\draw (3.86,-2.02)-- (3.84,-1.1);
\draw (2.82,-3.86)-- (3.82,-4.52);
\draw (2.82,-3.86)-- (1.82,-4.52);
\draw (1.78,-2.04)-- (0.82,-2.52);
\draw (1.78,-2.04)-- (1.76,-1.02);
\begin{scriptsize}
\fill [color=black] (2.82,-2.52) circle (1.5pt);
\fill [color=black] (3.86,-2.02) circle (1.5pt);
\fill [color=black] (2.82,-3.86) circle (1.5pt);
\fill [color=black] (1.78,-2.04) circle (1.5pt);
\fill [color=black] (4.82,-2.52) circle (1.5pt);
\fill [color=black] (3.84,-1.1) circle (1.5pt);
\fill [color=black] (3.82,-4.52) circle (1.5pt);
\fill [color=black] (1.82,-4.52) circle (1.5pt);
\fill [color=black] (0.82,-2.52) circle (1.5pt);
\fill [color=black] (1.76,-1.02) circle (1.5pt);
\end{scriptsize}
\end{tikzpicture}
\caption{$\mathcal{J}$: Jewel}
\label{jewel}

%\begin{figure}[h]
%\begin{center}
\setlength{\unitlength}{0.4cm}
\begin{picture}(8,9)
\newsavebox{\Tri}

\savebox{\Tri}
  (04,03)[bl]{% definition
  \put(00,00){\circle*{.3}}
  \put(04,00){\circle*{.3}}
  \put(02,03){\circle*{.3}}

  \put(00,00){\line(2,3){2}}
  \put(00,00){\line(1,0){4}}
  \put(02,03){\line(2,-3){2}}
}

\put(00,03){\usebox{\Tri}}
\put(04,03){\usebox{\Tri}}

\put(04,03){\line(-1,-1){2}}
\put(02,01){\line(-1,0){2}}
\put(02,01){\line(0,-1){2}}

\put(04,03){\line(1,-1){2}}
\put(06,01){\line(1,0){2}}
\put(06,01){\line(0,-1){2}}

\put(02,01){\circle*{.3}}
\put(00,01){\circle*{.3}}
\put(02,-01){\circle*{.3}}
\put(06,01){\circle*{.3}}
\put(08,01){\circle*{.3}}
\put(06,-01){\circle*{.3}}

\put(02.8,6.5){\circle*{.2}}
\put(03.6,6.8){\circle*{.2}}
\put(04.4,6.8){\circle*{.2}}
\put(05.2,6.5){\circle*{.2}}

\put(02.8,-01.5){\circle*{.2}}
\put(03.6,-01.8){\circle*{.2}}
\put(04.4,-01.8){\circle*{.2}}
\put(05.2,-01.5){\circle*{.2}}

\put(03.8,1.9){\boldmath{$v$}}
\end{picture}

%\end{center}
\vspace{0.5cm}
\caption{A flower graph $F_{h,k}(v)$}\label{Big Flower}
\label{flower}
\end{figure}

Herzog and Rinaldo further generalized this lower bound to the case of block graphs.  A graph is said to be \textit{indecomposable} if it does not have any vertex of clique degree $2$. While investigating the regularity of binomial edge ideals of graphs, it is enough to consider indecomposable graphs due to \cite[Theorem 3.1]{JNR}.
\begin{theorem}\cite[Theorem 8]{HR-Extremal}
If $G$ is an indecomposable block graph, then $\reg(S/J_G) \geq \iv(G) + 1$.
\end{theorem}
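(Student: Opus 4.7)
The proof proceeds by induction on $n(G)$. For the base case, if $\iv(G)=0$ and $G$ is connected, then $G$ is a single maximal clique $K_n$ with $n\ge 2$, so $\reg(S/J_{K_n})=1=\iv(G)+1$.

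For the inductive step, assume $\iv(G)\geq 1$. Then the block tree of $G$ has at least two blocks and hence admits a leaf block $B$, whose unique cut vertex I denote by $v$. Indecomposability of $G$ forces $\text{cdeg}_G(v)\geq 3$. Set $G':=G[V(G)\setminus(V(B)\setminus\{v\})]$, the induced subgraph obtained by removing every vertex of $B$ other than $v$. Each deleted vertex has clique degree one in $G$, so removing it does not affect the clique degree of any remaining vertex; consequently $\text{cdeg}_{G'}(v)=\text{cdeg}_G(v)-1\geq 2$, the vertex $v$ remains internal in $G'$, and $\iv(G')=\iv(G)$.

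The graph $G'$ is a connected block graph with $n(G')<n(G)$, but it may fail to be indecomposable. Iteratively applying the clique-sum decomposition of \cite[Theorem 3.1]{JNR} at each clique-degree-two cut vertex presents $G'$ as an iterated clique sum of indecomposable block graphs $H_1,\dots,H_k$ glued along $k-1$ vertices, each of which has clique degree two in $G'$ and clique degree one in the corresponding piece. A direct count gives
\[
\iv(G')\;=\;\sum_{j=1}^k \iv(H_j)\;+\;(k-1),
\]
while additivity of regularity along such clique sums yields $\reg(S/J_{G'})=\sum_{j=1}^{k}\reg(S/J_{H_j})$. Applying the induction hypothesis to each $H_j$,
\[
\reg(S/J_{G'})\;\geq\;\sum_{j=1}^{k}\bigl(\iv(H_j)+1\bigr)\;=\;\iv(G')+1\;=\;\iv(G)+1.
\]
Since $G'$ is an induced subgraph of $G$, the Matsuda--Murai Betti monotonicity \cite[Corollary 2.3]{MM} yields $\reg(S/J_G)\geq \reg(S/J_{G'})\geq\iv(G)+1$, closing the induction.

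The principal obstacle is the bookkeeping in the decomposition step: one must verify that the ``$+1$'' gained per indecomposable piece via additivity of regularity aligns exactly with the increment in $\iv(G')$ contributed by each gluing vertex, and that these gluing vertices are simplicial within every $H_j$ (so contribute nothing to $\sum_j \iv(H_j)$). A secondary subtlety is the reduction step itself: showing that removing a leaf block (while keeping its cut vertex) preserves $\iv$ hinges on the elementary block-graph fact that all non-cut vertices of a leaf block have clique degree one. Once these structural checks are in place, the induction closes cleanly without invoking any homological machinery beyond induced-subgraph monotonicity and additivity across clique sums.
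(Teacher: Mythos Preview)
The paper does not supply its own proof of this statement; it is quoted verbatim from \cite{HR-Extremal} as part of the survey in \Cref{sec:lower-bound}. The only related argument appearing in the paper is the sketch for the tree case \cite[Theorems 4.1 \& 4.2]{JNR}, which proceeds by a leaf ordering of the vertices and tracks when the newly added binomial is a regular element. Your argument is genuinely different: rather than building $G$ up one vertex at a time, you peel off an entire leaf block, invoke Matsuda--Murai monotonicity for the induced subgraph $G'$, and then repair the possible loss of indecomposability via the clique-sum additivity of \cite[Theorem 3.1]{JNR}. This avoids any appeal to regular sequences and works uniformly for block graphs (not just trees), at the cost of the extra bookkeeping you flag in your final paragraph.

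Your proof is correct. The two points you single out as delicate are indeed the only places requiring care, and both go through: (i) the $k-1$ gluing vertices each have clique degree two in $G'$ and clique degree one in every piece they belong to, so the identity $\iv(G')=\sum_j\iv(H_j)+(k-1)$ matches exactly the additivity $\reg(S/J_{G'})=\sum_j\reg(S/J_{H_j})$; and (ii) non-cut vertices of a leaf block lie in a unique maximal clique, so their removal drops $\text{cdeg}_G(v)$ by one while leaving every other clique degree untouched. One small presentational remark: your induction is on $n(G)$ but your base case is stated for $\iv(G)=0$; this is harmless (every $H_j$ has strictly fewer vertices than $G$, and the complete-graph case covers all graphs with $\iv=0$ regardless of size), but it would read more cleanly to say so explicitly.
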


Mascia and Rinaldo characterized all block graphs that attain equality in the above inequality.

\begin{theorem}\cite[Corollary 3.8]{RCJAA}
Let $G$ be a block graph. Then $\reg(S/J_G) = \iv(G)+1$ if and only if $G$ does not contain the flower graph (see \Cref{flower}) as an induced subgraph.
\end{theorem}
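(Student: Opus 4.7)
The lower bound $\reg(S/J_G) \geq \iv(G)+1$ for an indecomposable block graph is already supplied by Herzog-Rinaldo, so the task splits into two assertions: (i) if $G$ has no induced flower, then $\reg(S/J_G) \leq \iv(G)+1$; and (ii) if $G$ contains an induced flower, then $\reg(S/J_G) \geq \iv(G)+2$. I would first reduce both to the indecomposable case. If $v$ is a cut vertex of $G$ of clique degree $2$, then $G = G_1 \cup G_2$ is a pendant clique sum at $v$, and by \cite[Theorem 3.1]{JNR} one has $\reg(S/J_G) = \reg(S/J_{G_1}) + \reg(S/J_{G_2})$ together with $\iv(G) = \iv(G_1) + \iv(G_2) + 1$. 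Since the flower is itself indecomposable, containing an induced flower is a property carried by exactly one summand at a time, so by iterating this decomposition both directions reduce to the case where every internal vertex of $G$ has clique degree $\geq 3$.

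\textbf{Reverse direction.} For (i) I would induct on $\iv(G)$. The base case $\iv(G)=0$ gives $G$ a single complete graph with $\reg(S/J_G) = 1$. For the inductive step I would select an internal vertex $v$ that is extremal in the block tree, namely one for which all but at most one of the maximal cliques containing $v$ are themselves leaves of the block tree; such a $v$ always exists. Ohtani's lemma (\Cref{oh-lem}) then produces the short exact sequence \eqref{ses} with $m=2$ involving the three auxiliary graphs $G_v$, $G\setminus v$, and $G_v\setminus v$. Because $G$ is a block graph, $G_v$ is again a block graph (all cliques at $v$ merge into a single clique), while $G\setminus v$ and $G_v\setminus v$ split into disjoint unions of block graphs along the branches at $v$. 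Using the extremality of $v$ and the hypothesis on $G$, one verifies that none of these three graphs contains an induced flower. By \Cref{iv-lem} each has strictly fewer internal vertices, so by induction each has regularity at most $\iv(G)$, and \Cref{reg-lemma} applied to \eqref{ses} yields $\reg(S/J_G) \leq \iv(G) + 1$.

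\textbf{Forward direction.} For (ii), I would first establish $\reg(S/J_{F}) = \iv(F) + 2$ for every flower $F = F_{h,k}(v)$, either by an explicit Gr\"obner/Betti computation analogous to the jewel in \cite{JNR}, or by induction on $h$ and $k$ via \Cref{oh-lem} applied at the central vertex. Given indecomposable $G$ containing an induced flower $F$, I would induct on $\iv(G) - \iv(F)$: the base case $G = F$ is precisely the previous computation, and any larger indecomposable block graph containing $F$ can be reached from a smaller one by attaching a clique along a single simplicial vertex. A further short exact sequence argument shows that such an attachment increases $\iv$ and $\reg$ by the same amount, thereby preserving the excess of $2$. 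Combined with the additivity along pendant clique sums from the reduction step, this transfers the strict inequality $\reg(S/J_G) \geq \iv(G) + 2$ from $F$ to all of $G$.

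\textbf{Main obstacle.} The principal difficulty is in the inductive step of the reverse direction: after choosing the internal vertex $v$, one must ensure that none of $G_v$, $G\setminus v$, or $G_v\setminus v$ contains an induced flower. The graph $G\setminus v$ inherits the property automatically as an induced subgraph of $G$, but in $G_v$ the added edges among $N_G(v)$ could in principle create a new flower somewhere. Ruling this out is the technical heart of the argument and relies on a careful case analysis: if a flower were to appear only after merging, its central vertex would either be $v$ (forbidden by the leaf-block choice of $v$, which leaves only one nontrivial branch at $v$) or would have to interact with the new clique in a way that forces a smaller forbidden configuration already present in $G$. This combinatorial verification is precisely what the Mascia-Rinaldo argument must supply.
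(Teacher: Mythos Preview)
The paper does not supply a proof of this theorem: it is a survey, and the statement is quoted verbatim from Mascia--Rinaldo \cite{RCJAA} without any argument attached. The only nearby proof sketch in the paper is for the \emph{tree} analogue (\cite[Theorems 4.1 \& 4.2]{JNR}, the Jewel characterization), not for block graphs. Consequently there is no ``paper's own proof'' of this statement to compare your proposal against.

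That said, your plan is broadly in line with how such results are obtained, and it parallels the paper's sketch for trees in spirit: reduce via \cite[Theorem 3.1]{JNR} to indecomposable graphs, compute the invariant explicitly for the forbidden configuration, and run an Ohtani-type induction for the generic step. One point you should watch: in your forward direction you assert that any indecomposable block graph containing an induced flower can be reached from a smaller one by attaching a clique at a simplicial vertex while preserving indecomposability, and that this operation raises $\iv$ and $\reg$ by the same amount. Neither claim is automatic. Attaching a single clique at a simplicial vertex creates a vertex of clique degree $2$, destroying indecomposability, so you would need to attach at an already internal vertex or attach multiple cliques at once; and the equality $\reg(S/J_{G'}) = \reg(S/J_G) + 1$ for such an attachment is not a consequence of the regular-element argument used for trees. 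The actual Mascia--Rinaldo proof in \cite{RCJAA} proceeds differently, via an explicit extremal-Betti-number computation rather than a step-by-step build-up from the flower, and you may find that route cleaner than patching the induction you outline.
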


It is not very difficult to see that this lower bound does not extend to the class of bipartite graphs.  It can easily be seen that the number of internal vertices cannot be a lower bound since a bipartite graph may have all its vertices being internal vertices, for example, cycles of even length. So, the question is whether one has some fine lower bound or precise expression for the regularity of bipartite graphs. In general, there are no better lower bounds known than the length of the longest induced path. Schenzel and Zafar proved that for any complete bipartite graph $G$, $\reg(S/J_G) = 2$, which is the length of a longest induced path in $G$, \cite{Schenzel}.

Another homological invariant associated with a ring or a module is its \textit{depth}. One important  instance in the study of depth is the property of being \textit{Cohen-Macaulay}. An ideal  $I$ in a ring $R$ is said to be Cohen-Macaulay if $R/I$ is Cohen-Macaulay. We say that a graph $G$ is Cohen-Macaulay if $S/J_G$ is Cohen-Macaulay. Bolognini, Macchia and Strazzanti characterized the structure of Cohen-Macaulay bipartite graphs, \cite{BMS18}. In \cite{JA1}, we used this structure to computed the regularity of Cohen-Macaulay bipartite graphs. We describe it below.

Let $F_m$ denote the graph on the vertex set $[2m]$ with the edge set
$E(F_m) = \{\{2i, 2j-1\} : i = 1, \ldots , m, j = i, \ldots , m\}.$ They showed that these graphs are Cohen-Macaulay and are the building blocks of Cohen-Macaulay bipartite graphs. They proved that any Cohen-Macaulay bipartite graph is obtained by certain gluing operations on the $F_m$ graphs.

\textbf{Operation $*$:} For $i = 1, 2$, let $G_i$ be a graph with at least one vertex $f_i$ of degree one. Define the graph $(G_1, f_1) * (G_2, f_2)$ to be the graph  obtained by identifying $f_1$ and $f_2$.

\textbf{Operation $\circ$:} For $i = 1, 2$, let $G_i$ be a graph with at least one vertex $f_i$ of degree one and $v_i$ be its neighbor with $\deg_{G_i} (v_i) \geq 2$. Then, define $(G_1, f_1) \circ (G_2, f_2)$ to be the graph obtained from $G_1$ and $G_2$ by removing the pendant vertices $f_1, f_2$, and identifying the vertices $v_1$ and $v_2$.

\vskip 2mm \noindent
Then they proved:
\begin{theorem}\cite[Theorem 6.1]{BMS18}
Let $G$ be a bipartite graph. Then $J_G$ is Cohen-Macaulay if and only if $G = A_1 * A_2 * \cdots * A_k$, where $A_i = F_m$ or $A_i = F_{m_1} \circ \cdots \circ F_{m_r}$, for some $m \geq 1$ and $m_j \geq 3$.
\end{theorem}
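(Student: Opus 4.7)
The plan is to attack both implications using the primary decomposition of $J_G$ due to Herzog et al., in which the minimal primes $P_T(G)$ are indexed by subsets $T \subseteq V(G)$ satisfying a combinatorial cut-vertex condition. A graph $G$ is Cohen-Macaulay precisely when $S/J_G$ is unmixed (all $P_T(G)$ have the same codimension) and $\depth(S/J_G) = \dim(S/J_G)$. The strategy is to verify both conditions inductively along the gluing operations $*$ and $\circ$.

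For the ``if'' direction, I would first establish the base case that $J_{F_m}$ is Cohen-Macaulay. The staircase structure of $F_m$ makes its cut sets tractable: one enumerates all admissible $T$ and checks that each $P_T(F_m)$ has codimension $n(F_m) - 1 = 2m-1$, giving unmixedness. For depth, I would induct on $m$ using \Cref{oh-lem} applied to a carefully chosen internal vertex of $F_m$; this reduces to $J_{F_{m-1}}$ and to ideals of complete bipartite subgraphs, whose Cohen-Macaulayness is already known from the work of Schenzel--Zafar. Next, I would show that the operations $*$ and $\circ$ preserve Cohen-Macaulayness. For $*$, identification at a degree-one vertex produces a short exact sequence relating $S/J_{G_1 * G_2}$ to $S/J_{G_1} \otimes_K S/J_{G_2}$ (via \Cref{tensor-res}) modulo a corrective term supported at the identified vertex; the depth analogue of \Cref{reg-lemma} combined with a bookkeeping of cut sets of $G_1 * G_2$ in terms of those of the $G_i$ closes the inductive step. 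A parallel but slightly more delicate argument handles $\circ$, where the gluing occurs at the unique neighbor of the deleted pendants; here one must verify that the cut-set condition for unmixedness is preserved after the identification.

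For the ``only if'' direction, I would induct on $n(G)$ and aim to locate a cleavage point along which $G$ decomposes as $G_1 * G_2$ or $G_1 \circ G_2$, with each $G_i$ a smaller bipartite Cohen-Macaulay graph to which the inductive hypothesis applies. Unmixedness imposes severe constraints on the structure of $G$: every maximal cut set must have the same cardinality, which in the bipartite setting forces either a pendant adjacent to a cut vertex (a $*$-split) or a pair of pendants sharing a cut-vertex neighbor (a $\circ$-split). The main obstacle, and the heart of the proof, is the structural claim that a bipartite Cohen-Macaulay graph admitting neither a $*$-split nor a $\circ$-split must coincide with $F_m$ for some $m$. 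I expect this to be the hardest step, since it requires ruling out every other bipartite configuration compatible with both unmixedness and $\depth = \dim$; a promising route is to induct on the number of cut vertices, perform a local analysis around each pendant, and show that the neighborhoods must assemble into the staircase pattern of $F_m$ (in particular, forbidding an induced $C_{2k}$ with $k \geq 3$ inside any indecomposable block).
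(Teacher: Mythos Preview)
The paper does not contain a proof of this statement. It is quoted verbatim as \cite[Theorem 6.1]{BMS18} and attributed to Bolognini, Macchia and Strazzanti; the authors then immediately use it as a black box to set up the computation of regularity in \Cref{4.6} and \Cref{cm-bipartite}. There is therefore nothing in the present paper to compare your proposal against.

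If your intent is to reconstruct the argument of \cite{BMS18}, your outline is broadly reasonable in spirit: the primary decomposition of $J_G$ via cut sets is indeed the tool that controls unmixedness, and the inductive preservation of Cohen--Macaulayness under $*$ and $\circ$ is the natural route for the ``if'' direction. But be aware that your sketch of the ``only if'' direction is where the real content lies, and your description remains at the level of a wish list: the claim that unmixedness in the bipartite case forces either a $*$-split, a $\circ$-split, or $G=F_m$ is precisely the hard structural theorem of \cite{BMS18}, and it does not follow from a short local analysis around pendants. You should consult \cite{BMS18} directly rather than treat this as an exercise the present paper leaves to the reader.
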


\begin{figure}[H]
\begin{tikzpicture}
\draw [line width=1.pt] (1.,2.)-- (1.,1.);
\draw [line width=1.pt] (2.,2.)-- (1.,1.);
\draw [line width=1.pt] (2.,2.)-- (2.,1.);
\draw [line width=1.pt] (3.,2.)-- (2.,1.);
\draw [line width=1.pt] (3.,2.)-- (1.,1.);
\draw [line width=1.pt] (3.,2.)-- (3.,1.);
\draw [line width=1.pt] (4.,2.)-- (3.,1.);
\draw [line width=1.pt] (4.,2.)-- (4.,1.);
\draw [line width=1.pt] (4.,2.)-- (2.,1.);
\draw [line width=1.pt] (4.,2.)-- (1.,1.);
\begin{scriptsize}
\draw [fill=black] (1.,2.) circle (1.5pt);
\draw[color=black] (1.0898127435570215,2.2468642615417447) node {$1$};
\draw [fill=black] (1.,1.) circle (1.5pt);
\draw[color=black] (1.0898127435570215,0.8) node {$2$};
\draw [fill=black] (2.,2.) circle (1.5pt);
\draw[color=black] (2.1010175979349843,2.2468642615417447) node {$3$};
\draw [fill=black] (2.,1.) circle (1.5pt);
\draw[color=black] (2.1010175979349843,0.8) node {$4$};
\draw [fill=black] (3.,2.) circle (1.5pt);
\draw[color=black] (3.098739720921241,2.2468642615417447) node {$5$};
\draw [fill=black] (3.,1.) circle (1.5pt);
\draw[color=black] (3.098739720921241,0.8) node {$6$};
\draw [fill=black] (4.,2.) circle (1.5pt);
\draw[color=black] (4.096461843907497,2.2468642615417447) node {$7$};
\draw [fill=black] (4.,1.) circle (1.5pt);
\draw[color=black] (4.096461843907497,0.8) node {$8$};
\end{scriptsize}
\end{tikzpicture}
\caption{$F_4$}
\end{figure}
%The operation $*$ defined here is, in fact, a clique sum of two graphs at free vertices. Hence it follows from \cite[Theorem 2.7]{RR14} that the joined graph is Cohen-Macaulay if and only if each of the graphs are Cohen-Macaulay. Also,  
It follows from \cite[Theorem 3.1]{JNR} that for a Cohen-Macaulay bipartite graph $G = A_1 *\cdots *A_k$, $\reg(S/J_G) = \sum_{i=1}^k \reg(S/J_{A_i})$. Hence, to study the regularity, we only need to consider the graphs which are of the form of $A_i$ given in the statement of the previous theorem. In \cite{JA1}, we computed the regularity of Cohen-Macaulay bipartite graphs. In that paper, we observed an error in the statement and the proof of the regularity expression. These results were part of the doctoral thesis of Kumar. We had the corrected version in the thesis in 2019. Since that is not a public document, we take this opportunity to publish the corrected statement and a correct proof. We begin with setting up some notation for this purpose.

\begin{definition}
Let $W = \{v_1, \ldots , v_r\} \subseteq [n]$. Then, $F^W(K_n)$ is the graph obtained from $K_n$ by the following operation: for every $i = 1, \ldots ,r$,  attach a complete graph $K_{a_i}$ to $K_n$ in such a way that $V(K_n) \cap  V(K_{a_i}) = \{v_1, \ldots, v_i\}$, for some $a_i >	i$. We say that the graph $F^W(K_n)$ is obtained by adding a fan to $K_n$ on the set $W$, and $\{ K_{a_1},\ldots, K_{a_r}\} $ is the branch of that fan on  $W$.
	
Let $K_n$ be the complete graph on $[n]$ and $W_1 \sqcup \cdots \sqcup W_k$ be a partition of a subset $W \subseteq [n]$. Let $F_k^W(K_n)$ be the graph obtain from $K_n$ by adding a fan on each set $W_i$.  For each $i\in	\{1,\ldots , k \}$, set $W_i =\{v_{i,1},\ldots , v_{i,r_i}\}$ and 	$\{K_{a_{i,1}},\ldots , K_{a_{i,r_i}}\}$ be the branch of the fan on $W_i$. The graph $F_k^W(K_n)$ is called a $k$-fan of $K_n$ on the set $W$.
\end{definition}

\begin{figure}[H]
\begin{center}
\begin{tikzpicture}[scale=.6]
\draw (-1,4)-- (-0.94,2.68);
\draw (-0.94,2.68)-- (0.02,1.6);
\draw (0.02,1.6)-- (1.04,2.74);
\draw (1.04,2.74)-- (1,4);
\draw (1,4)-- (0,5);
\draw (0,5)-- (-1,4);
\draw (-1,4)-- (-2,5);
\draw (-1,4)-- (-1.16,5.66);
\draw (-1.16,5.66)-- (0,5);
\draw (0,5)-- (0,6);
\draw (0,6)-- (-1,4);
\draw (-1,4)-- (1,4);
\draw (1,4)-- (0,6);
\draw (0,5)-- (-0.94,2.68);
\draw (-0.94,2.68)-- (1.04,2.74);
\draw (1.04,2.74)-- (0,5);
\draw (-1,4)-- (0.02,1.6);
\draw (0.02,1.6)-- (1,4);
\draw (1,4)-- (-0.94,2.68);
\draw (-1,4)-- (1.04,2.74);
\draw (0,5)-- (0.02,1.6);
\draw (1.04,2.74)-- (2.14,2.92);
\draw (1.04,2.74)-- (1.34,1.38);
\draw (1.34,1.38)-- (0.02,1.6);
\draw (-2,5)-- (-2.28,3.96);
\draw (-2.28,3.96)-- (-1,4);
\draw (-1.52,5.08)-- (-1,4);
\draw (-1.52,5.08)-- (0,5);
\draw (-1.52,5.08)-- (-1.16,5.66);
\draw (1.04,2.74)-- (0.88,1.88);
\draw (0.88,1.88)-- (1.34,1.38);
\draw (0.88,1.88)-- (0.02,1.6);
\begin{scriptsize}
\fill  (0,5) circle (1.5pt);
\draw (0.32,5.22) node {$2$};
\fill  (-1,4) circle (1.5pt);
\draw (-1.22,3.74) node {$1$};
\fill  (1,4) circle (1.5pt);
\draw (1.54,4.24) node {$3$};				
\fill  (1.04,2.74) circle (1.5pt);
\draw (1.44,3.18) node {$4$};
\fill  (-0.94,2.68) circle (1.5pt);
\draw (-1.2,2.42) node {$6$};
\fill  (0.02,1.6) circle (1.5pt);
\draw (-0.08,1.26) node {$5$};
\fill  (-2,5) circle (1.5pt);
\fill  (-1.16,5.66) circle (1.5pt);
\fill  (0,6) circle (1.5pt);
\fill  (2.14,2.92) circle (1.5pt);
\fill  (1.34,1.38) circle (1.5pt);
\fill  (-2.28,3.96) circle (1.5pt);
\fill  (-1.52,5.08) circle (1.5pt);
\fill  (0.88,1.88) circle (1.5pt);
%\draw (-0.08,0.5) node {$\Huge{G}_2$};
\end{scriptsize}
\end{tikzpicture}
\end{center}
\caption{A fan graph}\label{non-pure-fig}
\end{figure}
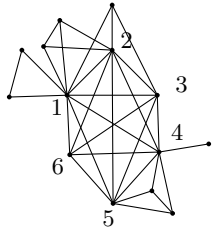

\begin{theorem}\cite[Theorem 4.6]{JA1}\label{4.6}
Let $n \geq 3$ and $H$ denote either $F_n$ or $F_k^W(K_n)$ with $W = W_1 \sqcup \cdots \sqcup W_k$ and $|W_i| \geq 2$ for some $i$. Let $G= F_{m_1} \circ \cdots \circ F_{m_t} \circ (H,f)$ be a graph with $t\geq1$, and for each $i\in [t]$, $m_i \geq 3$. Let $V(F_{m_1} \circ \cdots \circ F_{m_t}) \cap V(H)= \{v\}$ and $f$ be a pendant vertex in $N_H(v)$. If $H = F_k^W(K_n)$, then assume that $v\in W_i$ and $|W_i| \geq 2$. Then,  
$$\reg(S/J_G) = \reg(S/J_{F_{m_1} \circ \cdots \circ F_{m_{t-1}} \circ F_{m_t-1}})+\reg(S/J_{H\setminus \{v,f\}}).$$
\end{theorem}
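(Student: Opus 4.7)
The plan is to apply Ohtani's lemma (\Cref{oh-lem}) at the internal vertex $v^* \in V(G)$ obtained by identifying the neighbor $v_0$ of the pendant $f_0$ used in the final $\circ$-operation of $F_{m_1}\circ\cdots\circ F_{m_t}$ with $v \in V(H)$. This produces the short exact sequence
$$0 \to S/J_G \to S/J_{G_{v^*}} \oplus S/\bigl((x_{v^*},y_{v^*})+J_{G\setminus v^*}\bigr) \to S/\bigl((x_{v^*},y_{v^*})+J_{G_{v^*}\setminus v^*}\bigr) \to 0,$$
and I intend to deduce the claimed equality from Part (1) of \Cref{reg-lemma}.

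The first step is to observe that deleting $v^*$ disconnects $G$ into two pieces: the part coming from $G_0 := F_{m_1}\circ\cdots\circ F_{m_t}$, which becomes $G_0\setminus\{f_0,v_0\}$ (since $f_0$ was already dropped by the $\circ$ operation), and the $H$-side $H\setminus\{v,f\}$. A direct inspection of the bipartite graph $F_{m_t}$ (pendant $1$, neighbor $2$, odd vertices forming an independent star-neighborhood of $2$) shows that $F_{m_t}\setminus\{f_0,v_0\}$, after the natural relabeling, is isomorphic to $F_{m_t-1}$, and the pendant used to glue $F_{m_t}$ to $F_{m_{t-1}}$ survives untouched. Hence
$$G\setminus v^* \;=\; \bigl(F_{m_1}\circ\cdots\circ F_{m_{t-1}}\circ F_{m_t-1}\bigr) \;\sqcup\; \bigl(H\setminus\{v,f\}\bigr),$$
and \Cref{tensor-res} yields
$$R := \reg(S/J_{G\setminus v^*}) = \reg(S/J_{F_{m_1}\circ\cdots\circ F_{m_{t-1}}\circ F_{m_t-1}}) + \reg(S/J_{H\setminus\{v,f\}}),$$
which is exactly the right-hand side of the theorem. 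So it remains to prove that $\reg(S/J_G)=R$.

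To invoke Part (1) of \Cref{reg-lemma}, it suffices to establish
$$\reg(S/J_{G_{v^*}}) \le R \quad \text{and} \quad \reg(S/J_{G_{v^*}\setminus v^*}) \le R-1.$$
The $F_{m_t}$-side of $N_G(v^*)$, namely $\{3,5,\dots,2m_t-1\}$, is an independent set in $F_{m_t}$, so completing $N_G(v^*)$ to a clique in $G_{v^*}$ amounts to adjoining all these vertices together with $N_H(v)\setminus\{f\}$ into a single complete subgraph. My plan is to show that $G_{v^*}$ and $G_{v^*}\setminus v^*$ can each be written as a clique-sum at a pendant vertex of (a) the graph $F_{m_1}\circ\cdots\circ F_{m_{t-1}}\circ F_{m_t-1}$ (or a minor modification thereof) and (b) an $H$-piece with one fewer vertex, into which the added clique is absorbed. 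The pendant-clique-sum additivity proved in \cite[Theorem 3.1]{JNR} then reduces both regularity computations to known values; combined with the explicit formulas for the regularity of $F_n$ and $F_k^W(K_n)$ (where the hypothesis that $v\in W_i$ with $|W_i|\ge 2$ guarantees that the new clique lies inside an existing maximal clique of $H$), these values can be matched against $R$ and $R-1$.

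The main obstacle is the case analysis for $\reg(S/J_{G_{v^*}\setminus v^*}) \le R-1$: one must show that collapsing the two sides of $v^*$ into a single clique produces exactly a drop of one in the regularity relative to $G\setminus v^*$, and not more or less. The hypotheses on $H$ (either $H=F_n$, or $H=F_k^W(K_n)$ with $v$ lying in some $W_i$ of size at least two) are precisely what makes the added clique merge with an existing maximal clique on the $H$-side, so that the regularity of the $H$-part decreases by exactly one while the regularity of the $F_{m_t-1}$-part is unchanged. Once both inequalities are verified, Part (1) of \Cref{reg-lemma} yields $\reg(S/J_G)=R$, completing the proof.
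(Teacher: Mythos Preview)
Your setup is right and matches the paper: apply Ohtani at the merged vertex $v^*=v$, identify $G\setminus v$ with $(F_{m_1}\circ\cdots\circ F_{m_{t-1}}\circ F_{m_t-1})\sqcup(H\setminus\{v,f\})$, and aim to show that this term dominates in the short exact sequence. The gap is in how you propose to handle $G_{v}$ and $G_{v}\setminus v$.

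Your plan is to write $G_{v}$ (and $G_{v}\setminus v$) as a clique-sum at a pendant vertex of $F_{m_1}\circ\cdots\circ F_{m_{t-1}}\circ F_{m_t-1}$ and an ``$H$-piece'', and then invoke \cite[Theorem~3.1]{JNR}. This decomposition does not exist. Completing $N_G(v)$ to a clique absorbs, on the $F_{m_t}$-side, the entire independent set $N_{F_{m_t}\setminus f_{t,2}}(v)$, and this set already contains the gluing vertex $v_{t-1,t}$ between $F_{m_{t-1}}$ and $F_{m_t}$. Thus $v_{t-1,t}$ lies in the new big clique \emph{and} is adjacent to the independent family of odd-indexed vertices of $F_{m_{t-1}}$; it is not free, and no free or pendant vertex separates the $F_{m_1}\circ\cdots\circ F_{m_{t-1}}$ part from the rest. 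In other words, $G_{v}$ is still a $\circ$-chain
\[
G_{v}\;=\;F_{m_1}\circ\cdots\circ F_{m_{t-1}}\circ G_1,\qquad G_1=F_k^{U}\bigl(K_{|N_G[v]|}\bigr),
\]
not a $*$-sum, so \cite[Theorem~3.1]{JNR} does not apply. The paper closes exactly this gap by \emph{induction on $t$}: the statement is crafted so that $G_{v}$ (and $G_{v}\setminus v$) are again instances of the theorem with $t-1$ factors and $H$ replaced by the fan graph $G_1$ (respectively $G_1\setminus v$); the inductive hypothesis then gives $\reg(S/J_{G_{v}})=\reg(S/J_{G_{v}\setminus v})=\reg(S/J_{F_{m_1}\circ\cdots\circ F_{m_{t-1}-1}})+(k+1)$, and a separate inequality on the $\circ$-chain side shows this is $<R$.

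Your heuristic for the regularity drop is also off: in the paper's computation the $H$-contribution is the same ($k+1$, respectively $3$) for $G_{v}$, $G_{v}\setminus v$, and $G\setminus v$; the strict inequality $\reg(S/J_{G_{v}})<R$ comes from comparing $\reg(S/J_{F_{m_1}\circ\cdots\circ F_{m_{t-1}-1}})$ with $\reg(S/J_{F_{m_1}\circ\cdots\circ F_{m_{t-1}}\circ F_{m_t-1}})$, not from a decrease on the $H$-side. So the missing ingredient is not a sharper analysis of $H$, but the recursive structure of the $\circ$-chain, which forces the argument to be inductive.
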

\begin{proof}
For each $i \in \{1,\ldots,t\}$ and $j=\{1,2\}$, let $f_{i,j}$ be the pendant vertices of $F_{m_i}$ and for each $i \in\{1,\ldots,t-1\}$, $V(F_{m_i}) \cap V(F_{m_{i+1}})=\{v_{i,i+1}\}$, i.e., $F_{m_i} \circ F_{m_{i+1}}$ is the graph obtained from $F_{m_i}$ and $F_{m_{i+1}}$ by removing pendant vertices $f_{i,2},f_{i+1,1}$ and identifying vertices $2m_i-1 =v_{i,i+1} =2$. Assume, without loss of generality,  that $v \in W_1$. Following \cite[Lemma 4.8]{Oh11}, we have $J_G = J_{G_v} \cap ((x_v,y_v)+J_{G\setminus v})$. Consider the following short exact sequence: 
\begin{equation}\label{ses1}
0  \longrightarrow  \dfrac{S}{J_{G} } \longrightarrow \dfrac{S}{J_{G_v}} \oplus \dfrac{S}{(x_v,y_v)+J_{G \setminus v}} \longrightarrow \dfrac{S}{(x_v,y_v)+J_{G_v \setminus v} } \longrightarrow 0.
\end{equation}		
		
We  proceed by  induction  on $t$. If $t=1$, then the assertion follows from \cite[Propositions 4.3, 4.4]{JA1}. Now, assume that $t\geq2$, and that the result  is true for $ t-1$. Let $H=F_k^W(K_n)$ and $H_1$ be the complete graph on $N_G[v]$. Then, $G_v = F_{m_1} \circ \cdots \circ F_{m_{t-1}} \circ G_1$, where $G_1=F_k^{U}(H_1)$ is the $k$-pure fan  of $H_1$ on $U=N_{F_{m_t} \setminus f_{t,2}}(v) \sqcup (W \setminus W_1)$, $G_v \setminus v = F_{m_1} \circ \cdots \circ F_{m_{t-1}} \circ F_k^U(H_1\setminus v)$ and $G\setminus v = F_{m_1} \circ \cdots \circ F_{m_{t-1}} \circ F_{m_t-1} \sqcup H\setminus \{v,f\}$. Since, $|N_{F_{m_t} \setminus f_{t,2}}(v)| \geq 2$, by induction on $t$ and by \cite[Theorem 3.4]{JA1},
\begin{eqnarray*}
\reg(S/J_{G_v}) & =& \reg(S/J_{F_{m_1} \circ \cdots \circ F_{m_{t-1}-1}})+ \reg(S/J_{F_k^U(H_1)}) \\ &=& \reg(S/J_{F_{m_1} \circ \cdots \circ F_{m_{t-1}-1}})+k+1,\\
\reg(S/J_{G_v \setminus v}) & =&  \reg(S/J_{F_{m_1} \circ \cdots \circ F_{m_{t-1}-1}})+ \reg(S/J_{F_k^U(H_1 \setminus v)})\\ &=& \reg(S/J_{F_{m_1} \circ \cdots \circ F_{m_{t-1}-1}})+k+1.
\end{eqnarray*}
Note that $\reg(S/((x_v,y_v)+J_{G\setminus v})) = \reg(S/J_{F_{m_1} \circ \cdots \circ {F_{m_{t-1}}} \circ {F_{m_t-1}}})+ \reg(S/J_{H\setminus \{v,f\}}).$
			
If $m_t =3$, then $ F_{m_1} \circ \cdots \circ F_{m_{t-1}} \circ F_{m_t-1} = (F_{m_1} \circ \cdots \circ F_{m_{t-1}}) * F_{1} $. Therefore, by \cite[Theorem 3.1]{JNR},
\begin{eqnarray*}
\reg(S/J_{F_{m_1} \circ \cdots \circ F_{m_{t-1}} \circ F_{m_t-1}})& =  &\reg(S/J_{F_{m_1} \circ \cdots \circ {F_{m_{t-1}}}})+1\\ &>& \reg(S/J_{F_{m_1} \circ \cdots \circ F_{m_{t-1}-1}}),
\end{eqnarray*}
where the last inequality follows from \cite[Corollary 2.2]{MM}. If $m_t >3$, then by induction, 
\begin{eqnarray*}
\reg(S/J_{F_{m_1} \circ \cdots \circ F_{m_{t-1}} \circ F_{m_t-1}})& = &\reg(S/J_{F_{m_1} \circ \cdots \circ {F_{m_{t-1}-1}}})+\reg(S/J_{F_{m_t-2}})\\
&>& \reg(S/J_{F_{m_1} \circ \cdots \circ F_{m_{t-1}-1}}),	
\end{eqnarray*} where the last inequality follows from \cite[Proposition 4.1]{JA1}. Since $|W_1|\geq 2$ and $v\in W_1$, $H\setminus \{v,f\}$ is a $k$-pure fan graph. Therefore, by \cite[Theorem 3.4]{JA1}, $\reg(S/J_{H\setminus \{v,f\}})=k+1$. Using \cite[Lemma 2.2]{JA1} in the short exact sequence \eqref{ses1}, we get 
$$\reg(S/J_G)=\reg(S/J_{F_{m_1} \circ \cdots \circ {F_{m_{t-1}}} \circ {F_{m_t-1}}})+ \reg(S/J_{H\setminus \{v,f\}}).$$		
\indent	Now, assume that $H=F_n$. Let $H_2$ be the complete graph on  $N_G[v]$. Note that $G_v = F_{m_1} \circ \cdots \circ F_{m_{t-1}} \circ G_2$, where $G_2=F_2^{U'}(H_2)$ is the $2$-pure  fan  of $H_2$ on $U'=N_{F_{m_t} \setminus f_{t,2}}(v) \sqcup N_{F_{n} \setminus f}(v) $, $G\setminus v = F_{m_1} \circ \cdots \circ F_{m_{t-1}} \circ F_{m_t -1} \sqcup F_{n-1}$ and $G_v \setminus v = F_{m_1} \circ \cdots \circ F_{m_{t-1}} \circ F_2^{U'}(H_2\setminus	v)$.  Hence, by induction on $t$ and by \cite[Theorem 3.4]{JA1},
\begin{eqnarray*}
\reg(S/J_{G_v}) & = & \reg(S/J_{F_{m_1} \circ  \cdots \circ F_{m_{t-2}}\circ F_{m_{t-1} -1}})+\reg(S/J_{F_2^{U'}(H_2)})\\ & = & \reg(S/J_{F_{m_1} \circ  \cdots \circ F_{m_{t-2}}\circ
F_{m_{t-1} -1}})+3,\\
\reg(S/J_{G_v \setminus v}) & = & \reg(S/J_{F_{m_1} \circ  \cdots \circ F_{m_{t-2}}\circ F_{m_{t-1} -1}})+\reg(S/J_{F_2^{U'}(H_2\setminus v)})\\ & = & \reg(S/J_{F_{m_1} \circ  \cdots \circ F_{m_{t-2}}\circ F_{m_{t-1} -1}})+3.
\end{eqnarray*}
By \cite[Proposition 4.1]{JA1}, $\reg(S/J_{F_{n-1}})=3$. Note that 
\begin{eqnarray*}
\reg(S/((x_v,y_v)+J_{G\setminus v})) &= & \reg(S/J_{F_{m_1} \circ \cdots \circ {F_{m_{t-1}}} \circ {F_{m_t-1}}})+ \reg(S/J_{F_{n-1}})\\
&= &\reg(S/J_{F_{m_1} \circ \cdots \circ {F_{m_{t-1}}} \circ {F_{m_t-1}}})+ 3\\
&>& \reg(S/J_{G_v}) \geq \reg(S/((x_v,y_v)+J_{G_v\setminus v})).
\end{eqnarray*}
Using the short exact sequence \eqref{ses1} and \cite[Lemma 2.2]{JA1}, we conclude that
$$\reg(S/J_G)=  \reg(S/J_{F_{m_1} \circ \cdots \circ {F_{m_{t-1}}} \circ {F_{m_t-1}}}) + \reg(S/J_{F_{n-1}}).$$ Hence, the assertion follows.
\end{proof}
We would like to remark that, the earlier stated formula coincides
with the present one if $m_i \geq 4$ for $2 \leq i \leq t$. 

Now, we obtain a precise expression for the regularity  of Cohen-Macaulay bipartite graphs.
By \cite[Theorem 6.1]{BMS18}, if $G$ is a connected Cohen-Macaulay
bipartite graph, then there exists  $s \in \mathbb{N}$ such that
$G=G_1* \cdots* G_s$, where $G_i=F_{n_i}$ or $G_i=F_{m_{i,1}} \circ \cdots \circ F_{m_{i,t_i}}$, 
for some $n_i\geq1$, and $m_{i,j}\geq 3$, for each $j=1,\ldots,t_i$.
Due to \cite[Theorem 3.1]{JNR}, it is enough to compute the regularity
of connected indecomposable Cohen-Macaulay bipartite graphs. Assume,
without loss of generality,  that $G=F_{m_1} \circ \cdots \circ
F_{m_t}$, for $t \geq 2$,  and $m_i \geq 3$, for each $i$. Let $H_G$
be a graph on the vertex set $\{2,\ldots,t-1\}$ and edge set $\{
\{i,i+1\} \; : \; 2 \leq i \leq t-2, m_i =m_{i+1}=3\}.$ For a graph
$G$, let $\ma(G)$ denote the matching number of $G$.
\begin{theorem}\cite[Theorem 4.7]{JA1}\label{cm-bipartite}
Let $G=F_{m_1} \circ \cdots \circ F_{m_t}$. Then,
$$\reg(S/J_G) =2\alpha_G+2\ma(H_G)+t,$$ where $\alpha_G = |\{j
~ :\; 2 \leq j \leq t-1 \text{ and } m_{j} \geq 4  \}\sqcup \{1,t\}|$. 
\end{theorem}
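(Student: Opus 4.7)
The plan is to induct on $t$, using Theorem 4.6 as the reduction tool. The base cases $t = 1$ and $t = 2$ are immediate: for $t = 1$, $\reg(S/J_{F_{m_1}}) = 3 = 2 \cdot 1 + 0 + 1$ by \cite[Proposition 4.1]{JA1}; and for $t = 2$, Theorem 4.6 applied to $G = F_{m_1} \circ (F_{m_2}, f)$ gives $\reg(S/J_G) = \reg(S/J_{F_{m_1 - 1}}) + \reg(S/J_{F_{m_2 - 1}}) = 3 + 3 = 6$, matching $2 \cdot 2 + 0 + 2$.

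For the inductive step with $t \geq 3$, view $G$ as $F_{m_1} \circ \cdots \circ F_{m_{t-1}} \circ (F_{m_t}, f)$ and apply Theorem 4.6 with $H = F_{m_t}$. Since $F_{m_t} \setminus \{v, f\} \cong F_{m_t - 1}$ and $\reg(S/J_{F_{m_t - 1}}) = 3$, we obtain
\[
\reg(S/J_G) = \reg(S/J_{\widetilde{G}}) + 3, \qquad \widetilde{G} := F_{m_1} \circ \cdots \circ F_{m_{t-2}} \circ F_{m_{t-1} - 1}.
\]
Split on $m_{t-1}$. If $m_{t-1} \geq 4$ (Case A), then $\widetilde{G}$ is again of the form stipulated in the theorem with $t - 1$ pieces, and the inductive hypothesis applies. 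If $m_{t-1} = 3$ (Case B), then $F_{m_{t-1} - 1} = F_2$, and we use the isomorphism $F_{m_1} \circ \cdots \circ F_{m_{t-2}} \circ F_2 \cong (F_{m_1} \circ \cdots \circ F_{m_{t-2}}) * F_1$ noted in the proof of Theorem 4.6 (both constructions attach a pendant path of length two at the same vertex). Combined with \cite[Theorem 3.1]{JNR} and $\reg(S/J_{F_1}) = 1$, this gives $\reg(S/J_G) = \reg(S/J_{G''}) + 4$, where $G'' := F_{m_1} \circ \cdots \circ F_{m_{t-2}}$ has $t - 2$ pieces, to which induction (or the base case when $t = 3$) applies.

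The remaining work is combinatorial: verify the inductive formula agrees with $2\alpha_G + 2\ma(H_G) + t$. In Case A, one checks $\alpha_G - \alpha_{\widetilde{G}} = 1$ (the value $m_{t-1}$ contributes to the middle count of $\alpha_G$ precisely because $m_{t-1} \geq 4$, whereas in $\widetilde{G}$ it has migrated to an endpoint position, already counted there) and $\ma(H_G) = \ma(H_{\widetilde{G}})$ (the extra vertex $t - 1$ is isolated in $H_G$, since the potential edge $\{t-2, t-1\}$ would require $m_{t-1} = 3$). In Case B, a sub-analysis on $m_{t-2}$ is needed: when $m_{t-2} \geq 4$ we get $\alpha_G - \alpha_{G''} = 1$ and $\ma(H_G) = \ma(H_{G''})$; when $m_{t-2} = 3$ we get $\alpha_G = \alpha_{G''}$ and $\ma(H_G) = \ma(H_{G''}) + 1$. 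In either case the identity $\alpha_G + \ma(H_G) = \alpha_{G''} + \ma(H_{G''}) + 1$ holds, so the arithmetic closes.

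The main obstacle is the matching computation in Case B when a run of $3$'s extends past $t - 2$: when $m_{t-3} = m_{t-2} = m_{t-1} = 3$ (with $t \geq 5$), $H_G$ gains not only the edge $\{t-2, t-1\}$ but also the edge $\{t-3, t-2\}$. The upper bound $\ma(H_G) \leq \ma(H_{G''}) + 1$ follows by observing that both new edges are incident to $t - 2$, so any matching of $H_G$ contains at most one of them and therefore restricts to a matching of $H_{G''}$ of size at least $|M| - 1$; the lower bound comes from appending $\{t-2, t-1\}$ to a maximum matching of $H_{G''}$. For small $t$ (notably $t = 3$, where the middle of $G''$ is empty), the sub-case analysis degenerates but the closing identity still holds by direct verification.
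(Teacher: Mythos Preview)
Your proof is correct and follows essentially the same route as the paper's: induction on $t$ via Theorem~\ref{4.6}, peeling off the last factor to pick up a $+3$, then splitting on $m_{t-1}$ (your Case~A/B is the paper's $m_{t-1}>3$ versus $m_{t-1}=3$) and, in Case~B, further on $m_{t-2}$ to match up $\alpha$ and the matching number. The only cosmetic differences are that you handle $t=2$ through Theorem~\ref{4.6} rather than \cite[Proposition~4.3]{JA1}, and you spell out the matching bound for long runs of $3$'s and the $t=3$ degeneration more explicitly than the paper does.
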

\begin{proof}
We proceed by induction on $t$. If $t=2$, then $\alpha_G =2$ and
$\ma(H_G)=0$. Therefore, the assertion follows from \cite[Proposition
4.3]{JA1}. So,  assume that $t>2$. Note that $G= F_{m_1} \circ \cdots
\circ F_{m_{t-1}} \circ F_{m_t}$.  Thus, by Theorem \ref{4.6},
$$\reg(S/J_{G}) = \reg(S/J_{F_{m_1} \circ \cdots \circ
{F_{m_{t-1}-1}}}) + \reg(S/J_{F_{m_t-1}}).$$ By \cite[Proposition
4.1]{JA1}, $\reg(S/J_{F_{m_t-1}})=3$. Set $G'=F_{m_1} \circ \cdots
\circ {F_{m_{t-1}-1}}$. Therefore, it is enough to prove that
$\reg(S/J_{G'})= 2\alpha_G+2\ma(H_G)+t-3$.  Assume that $m_{t-1}>3$.
Then $\alpha_{G}=\alpha_{G'}+1$ and
$H_{G}=H_{G'} \sqcup \{t-1\}$. Therefore, $\ma(H_{G'})=\ma(H_G)$. By
induction,
$\reg(S/J_{G'})=2\alpha_{G'}+2\ma(H_{G'})+t-1=2\alpha_G+2\ma(H_G)+t-3$.
Now, assume that $m_{t-1}=3$. Then, $G'=F_{m_1} \circ \cdots \circ
F_{m_{t-2}} \circ {F_{2}}= (F_{m_1} \circ \cdots \circ {F_{m_{t-2}}})
* F_1$ is decomposable. Set $G''=F_{m_1} \circ \cdots \circ
{F_{m_{t-2}}}$. By \cite[Theorem 3.1]{JNR}, it is enough to  prove
that $\reg(S/J_{G''})= 2\alpha_G+2\ma(H_G)+t-4$. We have the following
cases:\\
\noindent  {\bf Case 1:} Suppose $m_{t-2} >3$. Then, $H_{G}=H_{G''}
\sqcup \{t-1\}$ and $\alpha_{G}=\alpha_{G''}-1$. Thus, $\ma(H_{G''})=\ma(H_G)$. Now, by induction,
$\reg(S/J_{G''})=2\alpha_{G''}+2\ma(H_{G''})+t-2=2\alpha_G+2\ma(H_G)+t-4$.\\
\noindent  {\bf Case 2:} Suppose $m_{t-2} =3$. Then, either
$H_{G}=H_{G''} \sqcup \{t-2,t-1\}$ or $H_{G}=H_{G''} *
\{t-3,t-2\} * \{t-2,t-1\}$. In both cases, $\ma(H_{G''})=\ma(H_G)-1$. Note that $\alpha_{G''}=\alpha_G$.  Thus, by induction, $\reg(S/J_{G''})=2\alpha_{G''}+2\ma(H_{G''})+t-2=2\alpha_G+2\ma(H_G)+t-4$. Hence, the assertion follows. 
\end{proof}
	
We illustrate the above result with the following example. 
\begin{example}
Let $G= F_3 \circ F_4 \circ F_3 \circ F_3 \circ F_3 $ be the graph shown below.
		
\begin{figure}[H]
\begin{center}
\begin{tikzpicture}[scale=.6]
\draw (-4,0)-- (-4,2);
\draw (-4,0)-- (-3,2);
\draw (-4,0)-- (-2,2);
\draw (-3,2)-- (-3,0);
\draw (-3,0)-- (-2,2);
\draw (-2,2)-- (-1,0);
\draw (-2,2)-- (0,0);
\draw (-1,0)-- (-1,2);
\draw (0,0)-- (0,2);
\draw (-1,2)-- (0,0);
\draw (-1,2)-- (1,0);
\draw (0,2)-- (1,0);
\draw (-2,2)-- (1,0);
\draw (1,0)-- (2,2);
\draw (1,0)-- (3,2);
\draw (2,2)-- (2,0);
\draw (2,0)-- (3,2);
\draw (3,2)-- (4,0);
\draw (3,2)-- (5,0);
\draw (4,2)-- (4,0);
\draw (4,2)-- (5,0);
\draw (5,0)-- (6,2);
\draw (5,0)-- (7,2);
\draw (6,2)-- (6,0);
\draw (7,2)-- (7,0);
\draw (7,2)-- (6,0);
\begin{scriptsize}
\fill  (-3,2) circle (1.5pt);
\fill  (-1,2) circle (1.5pt);
\fill  (-3,0) circle (1.5pt);
\fill  (-1,0) circle (1.5pt);
\fill  (-2,2) circle (1.5pt);
\fill  (0,0) circle (1.5pt);
\fill  (0,2) circle (1.5pt);
\fill  (1,0) circle (1.5pt);
\fill  (2,2) circle (1.5pt);
\fill  (2,0) circle (1.5pt);
\fill  (3,2) circle (1.5pt);
\fill  (4,2) circle (1.5pt);
\fill  (4,0) circle (1.5pt);
\fill  (5,0) circle (1.5pt);
\fill  (6,2) circle (1.5pt);
\fill  (6,0) circle (1.5pt);
\fill  (7,2) circle (1.5pt);
\fill  (7,0) circle (1.5pt);
\fill  (-4,2) circle (1.5pt);
\fill  (-4,0) circle (1.5pt);
%\draw (2.0,-1.3) node {$\Huge{G}$};
\end{scriptsize}
\end{tikzpicture}
\end{center}
%\caption{Cohen-Macaulay bipartite graph obtained by circle operation}\label{cm-bi-fig}
\end{figure}
Then $G$ is a Cohen-Macaulay bipartite graph with   $\alpha_G = 3$ and $\ma(H_G)= 1$. Therefore, by Theorem \ref{cm-bipartite}, $\reg(S/J_G) =6+2+5=13$.
\end{example}

All the results that we stated in this section are for binomial edge ideals. Therefore we would like to ask:

\begin{question}
Can one find a lower bound for precise expression for the generalized binomial edge ideals for all the classes of graphs discussed above?
\end{question}

In \cite{BMS22}, Bolognini et al. conjectured that for a graph $G$, $S/J_G$ is Cohen-Macaulay if and only if $G$ is ``accessible''. The property of being accessible is defined in terms of cut sets of a graph. A subset $T$ of $V(G)$ is said to be a cut set if the induced graph on $V(G) \setminus T$ has more components than the induced graph on $(V(G) \setminus T) \cup \{v\}$ for all $v  \in T$, i.e., every vertex $v$ in $T$ connects at least two distinct components of the induced graph on $V(G) \setminus T$. A graph $G$ is said to be \textit{accessible} if for every cut set $T$ of $G$, there exists a vertex $v \in T$ such that $T\setminus \{v\}$ is also a cut set. For example, if $G = C_n$ for some $n \geq 4$, is not accessible since two non-adjacent vertices of $C_n$ is  a cut set, but no singleton subset of $V(C_n)$ is a cut set. In \cite{BMS22}, the authors proved that if $G$ is a Cohen-Macaulay bipartite graph, then $G$ has a Hamiltonian path, i.e., a path that visits every vertex of a graph exactly once, \cite[Corollary 6.9]{BMS22}. A graph is said to be \textit{traceable} if it contains a Hamiltonian path. In view of \cite[Corollary 6.9]{BMS22} and \Cref{cm-bipartite}, Bolognini et al. raised the following question:
\begin{question}\cite[Problem 7.7]{BMS22}
Is it possible to find a formula or bounds for the regularity of Cohen-Macaulay binomial
edge ideals of traceable graphs?
\end{question}

We conclude our article by raising a question that has not been addressed in the literature so far:
\begin{question}
How is the regularity of $S/J_G$ and the regularity of $S/J_{K_m, G}$ are related? More precisely, if one knows $\reg(S/J_G)$, is there a way to compute or estimate $\reg(S/J_{K_m,G})$, at least for some important classes of graphs?
\end{question}

%\section{Regularity Powers and Open Questions}
%Castelnuovo-Mumford regularity of binomial edge ideals have been explored well. For many of the important classes of graphs, there is either a precise formula or a sharp upper bound. Cutkosky, Herzog and Trung \cite{CHT}, and independently Kodiyalam \cite{Kod}, proved that if $I$ is a homogeneous ideal in a polynomial ring, then $\reg(I^s)$ is a linear polynomial in $s$ for $s \gg 0$. Write $\reg(I^s) = as + b$ for $s \gg 0$. If $I$ has a generating get of elements of same degree $d$, then it is known that $a = d$. So, if $I$ is a binomial edge ideal of a graph, then $\reg(I^s) = 2s+b$ for $s \gg 0$. Thus, the interesting question is to compute $b$ in terms of the combinatorial data associated with $G$.

\bibliographystyle{amsplain}
\bibliography{Bibliography}

\end{document}